\renewcommand{\phi}{\varphi}
\newtheorem{base}{Base}[section]
\numberwithin{equation}{section}
\theoremstyle{plain}
\newtheorem{theorem}[base]{Theorem}
\newtheorem{lemma}[base]{Lemma}
\newtheorem{corollary}[base]{Corollary}
\newtheorem{definition}[base]{Definition}
\theoremstyle{definition}
\newtheorem{remark}[base]{Remark}
\newcommand{\R}{\mathbb{R}}
\newcommand{\N}{\mathbb{N}}
\renewcommand{\d}{\,\mathrm{d}}
\newcommand{\grad}{\nabla}
\renewcommand{\div}{\nabla \cdot}
\renewcommand{\L}[1]{{L^{#1}(\Omega)}}
\newcommand{\defs}{\coloneqq}
\newcommand{\sfed}{\eqqcolon}
\newcommand{\stext}[1]{\;\;\text{ #1 }\;\;}
\newcommand{\eps}{\varepsilon}
\newcommand{\loc}{\mathrm{loc}}
\newcommand{\tmax}{{T_{\mathrm{max}}}}
\newcommand{\supp}{\text{supp }}
\newcommand{\D}{\mathbb{D}}
\newcommand{\De}{\mathbb{D}_\eps}
\newcommand{\ue}{u_\eps}
\newcommand{\we}{w_\eps}
\renewcommand{\a}{a}
\newcommand{\logc}{\mu}
\newcommand{\loge}{r}
\newcommand{\divrege}{\beta}
\newcommand{\divregc}{A}
\newcommand{\prote}{s}
\newcommand{\WD}[1]{{W_\D^{1,#1}(\Omega)}}
\newcommand{\LD}[1]{{L_\D^{#1}(\Omega)}}
\newif\ifclarification
\newcommand\numberthis{\addtocounter{equation}{1}\tag{\theequation}}
\g@addto@macro\bfseries{\boldmath}
\title{Global solutions to a haptotaxis system with a potentially degenerate diffusion tensor in two and three dimensions}
\author{
	Frederic Heihoff\footnote{fheihoff@math.uni-paderborn.de}\\
	{\small Institut f\"ur Mathematik, Universit\"at Paderborn,}\\
	{\small 33098 Paderborn, Germany}
}
\date{}
\begin{document}
	
	\maketitle
	
	\begin{abstract}
		\noindent We consider the potentially degenerate haptotaxis system
		\begin{equation*}
		\left\{
		\begin{aligned}
		u_t &= \nabla \cdot (\mathbb{D} \nabla u + u \nabla \cdot \mathbb{D}) - \chi \nabla \cdot (u\mathbb{D}\nabla w) + \mu u(1-u^{r- 1}), \\
		w_t &= - uw
		\end{aligned}
		\right.
		\end{equation*}
		in a smooth bounded domain $\Omega \subseteq \mathbb{R}^n$, $n \in \{2,3\}$, with a no-flux boundary condition, positive initial data $u_0$, $w_0$ and parameters $\chi > 0$, $\mu > 0$, $r \geq 2$ and $\mathbb{D}: \overline{\Omega} \rightarrow \mathbb{R}^{n\times n}$, $\mathbb{D}$ positive semidefinite on $\overline{\Omega}$. 
		\\[0.5em]
		Our main result regarding the above system is the construction of weak solutions under fairly mild assumptions on $\mathbb{D}$ as well as the initial data, encompassing scenarios of degenerate diffusion in the first equation. As a step in this construction as well as a result of potential independent interest, we further construct classical solutions for the same system under a global positivity assumption for $\mathbb{D}$, which ensures the full regularizing influence of its associated diffusion operator. In both constructions, we naturally rely on the regularizing properties of a sufficiently strong logistic source term in the first equation.
		\\[0.5em]
		\textbf{Keywords:} Haptotaxis; logistic source; degenerate diffusion; weak solution;  global existence \\
		\textbf{MSC 2020:} 35K65, 35K57 (primary); 35K55, 35D30, 35Q92, 92C17 (secondary) 
	\end{abstract}
	
	\section{Introduction}
	As the movement of cells plays a significant role in many biological systems and processes, analyzing the underlying mechanisms can prove useful in understanding these systems and processes themselves. One such process, which is naturally of extensive interest, is the invasive movement of tumor cells into healthy tissue along gradients of tissue density during the progression of certain types of cancer, which is governed by a mechanism generally called haptotaxis (cf.\ \cite{CarterHaptotaxisMechanismCell1967}). Similarly to the efforts made to understand the related process of chemotaxis (cf.\ \cite{BellomoMathematicalTheoryKeller2015}), which models movement along gradients of a diffusive chemical as opposed to non-diffusive tissue, mathematical modeling of haptotaxis has proven to be a fruitful area of study. In both cases, by far the most attention at this point has been paid to approaches employing a Fickian diffusive movement model for the organisms in question, which assumes some homogeneity of the underlying medium. But bolstered by experiments regarding cell aggregation near interfaces between grey and white matter in mouse brains (cf.\ \cite{Burden-GulleyNovelCryoimagingGlioma2011a}), it has recently been suggested that especially in more heterogeneous environments, such as brain tissue, cell movement might be better described by non-Fickian diffusion (cf.\ \cite{Belmonte-BeitiaModellingBiologicalInvasions2013}), which is far less mathematically studied in these taxis settings.
	\\[0.5em]
	In an effort to add to the base of knowledge in this area, we will focus our efforts here on a haptotaxis model of cancer invasion featuring such non-Fickian \emph{myopic diffusion}, which was introduced in \cite{EngwerEffectiveEquationsAnisotropic2016}. More specifically, we consider the system
	\begin{equation}\label{proto_problem}
	\left\{
	\begin{aligned}
	u_t &= \nabla \cdot (\mathbb{D} \nabla u + u \nabla \cdot \mathbb{D}) - \chi \nabla \cdot (u\mathbb{D}\nabla w) + \logc u(1-u^{r- 1}), \\
	w_t &= - uw
	\end{aligned}
	\right.
	\end{equation}
	in a smooth bounded domain $\Omega \subseteq \R^n$, $n \in \{2,3\}$, with a no-flux boundary condition and appropriate parameters $\chi > 0$, $\mu > 0$, $r \geq 2$ and $\mathbb{D}: \overline{\Omega} \rightarrow \mathbb{R}^{n\times n}$, $\D$ positive semidefinite on $\overline{\Omega}$. The first equation models the invading cancer cells moving according to the aforementioned myopic diffusion, which is represented by the term $\div(\mathbb{D} \nabla u + u \nabla \cdot \mathbb{D})$, as well as according to haptotaxis, which is represented by the term $- \chi \nabla \cdot (u\mathbb{D}\nabla w)$. Apart from this, the equation further incorporates a logistic source. The second equation models the remaining healthy tissue cells and only features a consumption term. 
	\\[0.5em]
	The key feature of interest in the above system from both an application as well as a mathematical perspective is of course the parameter matrix $\D$, which represents a space dependent coupled diffusion and taxis tensor. In practice, this tensor can be derived from the underlying tissue structure by employing direct imaging methods (cf.\ \cite{EngwerEffectiveEquationsAnisotropic2016}) and represents the influence of said underlying structure on the movement of cells through it. To account for situations of both locally very dense as well as locally very sparse tissue, which both occur in concrete applications and hinder cell movement significantly, we allow $\D$ to be potentially degenerate. Notably in one dimension, solutions to a closely related system with degenerate diffusion have already been shown to reflect the aggregation behavior in interface regions seen in experiments (cf.\ \cite{Burden-GulleyNovelCryoimagingGlioma2011a}, \cite{WinklerSingularStructureFormation2018}) while to our knowledge in systems with non-degenerate diffusion long-time behavior results seem to generally be restricted to homogenization (cf.\ e.g.\ \cite{WangLargeTimeBehavior2016}). This seems to indicate that models of this kind featuring degenerate diffusion could potentially be a better representation of real world behavior. As such, the development of methods to cope with the challenges of this model, namely the reduced regularizing effect of the degenerate diffusion operator and the destabilizing effects of taxis, while still allowing for a sufficiently large class of matrices to enable the modeling of real world scenarios seems to be a worthwhile endeavor, which to our knowledge has thus far only been addressed in one dimension. Thus, the aim of this paper is the investigation of the apparently still open question whether solutions exist in two and three dimensions even if the diffusion operator is degenerate.

	\paragraph{Results.}Our main results regarding the haptotaxis model described above are twofold: First, we establish the existence of global classical solutions given a uniform positivity condition for $\D$, which allows us to basically treat it as we would any other elliptic diffusion operator, as well as a condition ensuring sufficient regularizing influence of the logistic source term. Second, we establish that it is still possible to construct fairly standard weak solutions under much more relaxed conditions for $\D$. More specifically, we drop the assumption that $\D$ must be globally positive in $\overline{\Omega}$ and replace it with a set of assumptions much more tailored to our methods for constructing said weak solutions, which are strictly weaker than the prior positivity assumption in allowing for matrices that are in some (small) parts of $\Omega$ only positive semidefinite. 
	\\[0.5em]
	Given that the definitions necessary to properly formulate the above results take up significant space, we will not go into more detail here but instead refer the reader to the very next section for the pertinent details regarding said results. As an addition to stating precise versions of our results in the next section, we will further discuss the prototypical examples of a matrix with a single point degeneracy as well as a matrix with degeneracies on a manifold of higher dimension and derive some conditions, under which they still allow for the construction of weak solutions. We do this to help build some additional intuition in parallel to the rather abstract regularity properties introduced in said section as well as to illustrate that our results can work for some scenarios with real world relevance such as e.g.\ a domain divided by an impenetrable membrane.
	\paragraph{Approach.}Let us now give a brief sketch of the methods employed to achieve our two main results. 
	\\[0.5em]
	For our classical existence result, we begin by using standard contraction mapping methods to gain local solutions with an associated blow-up criterion as the operator in the first equation is strictly elliptic for globally positive matrices $\D$. We then immediately transition to analyzing the function $a \defs u e^{-\chi w}$, which together with $w$ solves the closely related problem  (\ref{problem_a}). We do this because, in a sense, this transformation eliminates the problematic cross-diffusive term from the first equation by integrating it into the function $a$ and its diffusion operator. Using a fairly classical Moser-type iteration argument, we then establish an $L^\infty(\Omega)$ bound for $a$, which translates back to $u$. Using this bound combined with two testing procedures then yields a further $W^{1,4}(\Omega)$ bound for $w$, which together with the already established bound for $u$ is sufficient to ensure that finite time blow-up is in fact impossible in two and three dimensions and thus completes the proof of our first result.
	\\[0.5em]
	Regarding our second result, we begin by approximating the initial data, the matrix $\D$ as well as the logistic source term in such a way as to make the already established global classical existence result applicable to the in this way approximated versions of (\ref{proto_problem}). For the family of solutions $(\ue, \we)$, $\eps \in (0,1)$, gained in this fashion, we then establish a bound of the form
	\[
		\int_\Omega \ue\ln(\ue) + \int_\Omega \frac{\grad \we\cdot \De \grad \we}{\we} + \int_0^t \int_\Omega \frac{\grad \ue \cdot \De \grad \ue}{\ue} + \int_0^t\int_\Omega \ue^{r+\eps} \ln(\ue) \leq C
	\]
	by way of an energy-type inequality, which already proved useful in the one-dimensional case discussed in \cite{WinklerSingularStructureFormation2018}. Using this as a baseline, we then derive the bounds necessary for applications of the Aubin--Lions compact embedding lemma to gain our desired weak solutions as limits of the approximate ones.
	\paragraph{Prior work.} As haptotaxis models (cf.\ \cite{WangReviewQualitativeBehavior2020} for a general survey) as well as the closely related chemotaxis models (cf.\ \cite{BellomoMathematicalTheoryKeller2015} for a general survey) have been extensively studied in many possible variations since the introduction of their progenitor in the seminal 1970 paper by Keller and Segel (cf.\ \cite{keller1970initiation}), there is of course a lot of prior art available regarding global existence theory for said models. While it is certainly out of scope for this paper to cover prior results in their entirety, we will nonetheless give an overview of some notable ones.
	\\[0.5em] 
	Let us first note that for the one-dimensional case, where $\D$ simplifies to a real-valued function, there are already some results available for a variant of our scenario without a logistic source term (including potential spacial degeneracy) dealing with existence theory as well as long time behavior (cf.\ \cite{WinklerSingularStructureFormation2018},  \cite{WinklerRefinedRegularityStabilization2020} and \cite{WinklerGlobalWeakSolutions2017}). Weak solutions have also been constructed in very similar haptotaxis systems featuring porous-medium type and signal-dependent degeneracies as opposed to spacial ones (cf.\ \cite{ZhigunGlobalExistenceDegenerate2016}).
	\\[0.5em]  
	Regarding haptotaxis system with non-degenerate diffusion operators, e.g.\ $\D \equiv 1$ in our system, global existence and sometimes boundedness theory has been studied in various closely related settings (cf.\ \cite{CaoBoundednessThreedimensionalChemotaxishaptotaxis2016},  \cite{LiBoundednessChemotaxishaptotaxisModel2016}, \cite{LitcanuAsymptoticBehaviorGlobal2010}, 
	\cite{TaoBoundednessStabilizationMultidimensional2014},
	\cite{WalkerGlobalExistenceClassical2006},  \cite{WangBoundednessHigherdimensionalChemotaxishaptotaxis2016}, \cite{XiangNewResult2D2019}). Notably, these systems often feature an additional equation modeling a diffusive (potentially attractive) chemical and the fixed parameter choice $r = 2$ for the logistic term in addition to the more regular diffusion. In many of these scenarios, it has further been established that solutions converge to their constant steady states (cf.\ \cite{LiBoundednessAsymptoticBehavior2015}, \cite{LitcanuAsymptoticBehaviorGlobal2010},
	\cite{PangAsymptoticBehaviorSolutions2019}, \cite{TaoBoundednessStabilizationMultidimensional2014}, \cite{WangLargeTimeBehavior2016},  \cite{ZhengLargeTimeBehavior2019}) under varied but sometimes restrictive assumptions. There has also been some analysis of haptotaxis with tissue remodeling, which is represented in the model by some additional source terms in the equation for $w$ (cf.\  \cite{PangGlobalExistenceTwodimensional2017}, \cite{TaoGlobalExistenceHaptotaxis2011}, \cite{TaoEnergytypeEstimatesGlobal2014}).
	\\[0.5em]
	Apart from haptotaxis models, there has also been significant analysis of chemotaxis models featuring degenerate diffusion (cf.\ \cite{EberlAnalysisDegenerateBiofilm2014}, \cite{LaurencotChemotaxisModelThreshold2005}, \cite{XuChemotaxisModelDegenerate2020} including degeneracies depending on the cell density itself).
	\\[0.5em]
	Lastly, let us just briefly mention that the regularizing effects of logistic source terms we rely on in this paper have already been very well-documented in various chemotaxis systems (cf.\ \cite{LankeitEventualSmoothnessAsymptotics2015}, \cite{WinklerBoundednessHigherdimensionalParabolicparabolic2010} among many others) as well as haptotaxis systems (cf.\ \cite{TaoGlobalClassicalSolutions2020}). 
	\section{Main Results and Related Definitions}
	\label{section:main_results}
	As already alluded to in the introduction, we will focus our attention in this paper on the system
	\begin{equation}\label{problem}
	\left\{
	\begin{aligned}
	u_t &= \div (\D \grad u + u \div \D) - \chi \div (u\D \grad w) + \logc u(1-u^{\loge - 1}) \;\;\;\; &&\text{ on } \Omega\times(0,\infty), \\
	w_t &= - uw \;\;\;\; &&\text{ on } \Omega\times(0,\infty),  \\
	(\D \grad u) \cdot \nu &= \chi (u\D \grad w) \cdot \nu - u (\div \D) \cdot \nu \;\;\;\; &&\text{ on } \partial\Omega\times(0,\infty),\\
	u(\cdot, 0) &= u_0, \;\; w(\cdot, 0) = w_0 \;\;\;\; &&\text{ on } \Omega
	\end{aligned}
	\right.
	\end{equation}
	in a smooth bounded domain $\Omega \subseteq \R^n$, $n \in \{2,3\}$, with parameters $\chi > 0$, $\mu > 0$, $r \geq 2$, $\D: \overline{\Omega} \rightarrow \R^{n\times n}$, $\D$ positive semidefinite on $\overline{\Omega}$, and some initial data $u_0, w_0 : \Omega \rightarrow [0,\infty)$.
	\\[0.5em]
	Our results concerning this system are twofold. We will first derive the following existence result concerning global classical solutions in two and three dimensions under the assumptions that $\D$ and the initial data are sufficiently regular, $\D$ is positive definite on $\overline{\Omega}$ and the logistic source term is sufficiently strong.
	\begin{theorem} \label{theorem:classical_solution}
		Let $\Omega \subseteq \R^n$, $n\in\{2,3\}$, be a bounded domain with a smooth boundary, $\chi \in (0,\infty)$, $\mu \in (0,\infty)$, $r\in[2,\infty)$ and $\D\in C^2(\overline{\Omega}; \R^{n\times n})$. We further assume that $\D$ is positive definite on $\overline{\Omega}$ and satisfies $(\div \D) \cdot \nu = 0$ on $\partial \Omega$. Let $u_0, w_0 \in C^{2+\vartheta}(\overline{\Omega})$, $\vartheta \in (0,1)$, be some initial data with $u_0, w_0 > 0$ on $\overline{\Omega}$ and $(\D \grad u_0)\cdot\nu = (\D \grad w_0)\cdot \nu= 0$ on $\partial \Omega$.
		\\[0.5em]
		If either $r > 2$ or $\logc \geq \chi \|w_0\|_\L{\infty}$, then there exist positive functions $u, w \in C^{2,1}(\overline{\Omega}\times[0,\infty))$ such that $(u,w)$ is a global classical solution to (\ref{problem}) with initial data $(u_0, w_0)$.
	\end{theorem}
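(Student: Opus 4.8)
The plan is to realize the four-step program sketched in the introduction, with the substitution $a \defs u e^{-\chi w}$ as the recurring device. First, for the local theory: since $\D\in C^2(\overline{\Omega};\R^{n\times n})$ is positive definite on the compact set $\overline{\Omega}$, the operator $v\mapsto\div(\D\grad v + v\div\D)$ is uniformly elliptic (with ellipticity constant $\Dmin>0$ equal to the minimum over $\overline{\Omega}$ of the smallest eigenvalue of $\D$), so the first equation of (\ref{problem}) is scalar uniformly parabolic and the second, for frozen $u$, the linear ODE $w_t=-uw$. Solving the latter explicitly as $w=w_0\exp(-\int_0^t u)$ and feeding this into a Banach fixed-point scheme for $u$ in a parabolic Hölder space — or, alternatively, invoking Amann's quasilinear theory, for which the compatibility conditions $(\D\grad u_0)\cdot\nu=(\D\grad w_0)\cdot\nu=0$ and $(\div\D)\cdot\nu=0$ are precisely what is needed — gives a unique maximal classical solution $(u,w)$ on $\overline{\Omega}\times[0,\tmax)$ together with the criterion that $\tmax=\infty$ or $\limsup_{t\uparrow\tmax}\bigl(\|u(\cdot,t)\|_{\L\infty}+\|w(\cdot,t)\|_{W^{1,q}(\Omega)}\bigr)=\infty$ for any fixed $q>n$. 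The maximum principle yields $u>0$, the representation of $w$ yields $0<w\le\|w_0\|_{\L\infty}\eqqcolon W_0$, and differentiating the boundary identity for $w$ shows that $(\D\grad w)\cdot\nu$ satisfies a homogeneous linear ODE in $t$ with vanishing initial value, hence is identically zero.

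Next I would pass to $a\defs u e^{-\chi w}$; a direct computation shows $(a,w)$ solves
\[
a_t = e^{-\chi w}\div\bigl(e^{\chi w}(\D\grad a + a\div\D)\bigr) + \mu a + \chi a^2 w e^{\chi w} - \mu a^r e^{(r-1)\chi w}
\]
subject to $(\D\grad a + a\div\D)\cdot\nu=0$ on $\partial\Omega$, so that the cross-diffusive term has been absorbed into a merely weighted diffusion operator. Since $1\le e^{\chi w}\le e^{\chi W_0}\eqqcolon K$, the functions $a$ and $u$ are pointwise comparable and it suffices to bound $a$. The point of the dichotomy in the hypothesis is that it makes the reaction term $R(a)\defs\mu a+\chi a^2 w e^{\chi w}-\mu a^r e^{(r-1)\chi w}$ controllable: if $r>2$ then $\chi W_0 K a^2-\mu a^r$ is bounded above, so $R(a)\le\mu a+C_0$; if $r=2$ then its last two terms collapse to $a^2 e^{\chi w}(\chi w-\mu)\le 0$ because $\mu\ge\chi W_0\ge\chi w$, so even $R(a)\le\mu a$. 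In either case $R(a)\le\mu a+C_0$ for all $a\ge 0$ with a data-dependent constant $C_0$.

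Then I would run a Moser-type iteration on the $a$-equation. Testing it against $a^{p-1}e^{\chi w}$ — the weight being chosen exactly so that it cancels the one inside the divergence, which is what prevents any term containing $\grad w$ from appearing — and integrating by parts (the boundary term vanishes by the no-flux condition), I would absorb the $\div\D$-contributions into the dissipation by Young's inequality and use $R(a)\le\mu a+C_0$ to reach an estimate of the form
\[
\frac1p\frac{d}{dt}\int_\Omega e^{\chi w}a^p \;\le\; -\frac{(p-1)\Dmin}{2}\int_\Omega a^{p-2}|\grad a|^2 \;+\; C_1\, p\int_\Omega a^p \;+\; C_2,
\]
with $C_1,C_2$ independent of $p$. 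Combining this with a Gagliardo--Nirenberg interpolation and the uniform $\L1$-bound for $u$ coming from the logistic term, the classical bootstrap over $p=2^k$ produces a bound for $\|a(\cdot,t)\|_{\L\infty}$, hence for $\|u(\cdot,t)\|_{\L\infty}$, on $[0,T]$ for every $T<\tmax$.

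The hard part is the final step, where the two equations are genuinely coupled: differentiating $w_t=-uw$ shows that any $W^{1,4}(\Omega)$-bound for $w$ needs control of $\grad u$, whereas the $u$-equation in its original form contains $\grad^2 w$ through the cross-diffusion, so neither estimate can be opened in isolation. The substitution saves the day a second time, since the $a$-equation carries only first-order derivatives of $w$: with $\|u\|_{\L\infty}$ in hand, two testing procedures applied to the $a$- and $w$-equations in tandem — one producing a bound for $\grad a$ in $L^\infty((0,T);\L4)$, the other exploiting the identity $\frac{d}{dt}\int_\Omega|\grad w|^4\lesssim\|\grad w\|_{\L4}^3\|\grad a\|_{\L4}+\|\grad w\|_{\L4}^4$, which follows once $\grad u=e^{\chi w}(\grad a+\chi a\grad w)$ is inserted — close into a bound for $\|w(\cdot,t)\|_{W^{1,4}(\Omega)}$; the embedding $W^{1,4}(\Omega)\hookrightarrow C^0(\overline{\Omega})$, which holds precisely because $n\le 3$, is what makes this step (and the choice $q=4$ above) work. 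Finally, feeding $\|u\|_{\L\infty}$ and $\|w\|_{W^{1,4}(\Omega)}$ into parabolic $L^q$- and Schauder estimates for the $a$-equation and bootstrapping yields uniform $C^{2+\vartheta,1+\vartheta/2}$-bounds for $(a,w)$, hence for $(u,w)$, on $\overline{\Omega}\times[0,T]$ for every $T<\tmax$, which is incompatible with the blow-up criterion unless $\tmax=\infty$. The genuinely routine parts are tracking the $p$-dependence of the constants through the Moser iteration and disposing of the various $\div\D$ terms.
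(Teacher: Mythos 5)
Your proposal is correct and follows essentially the same route as the paper: local existence with the $\|u\|_{\L\infty}+\|w\|_{W^{1,q}}$ blow-up criterion, the substitution $a=ue^{-\chi w}$ to absorb the cross-diffusion, a Moser iteration on the $a$-equation using the dichotomy $r>2$ or $\mu\geq\chi\|w_0\|_{\L\infty}$ to tame the $a^2we^{\chi w}$ term, and a coupled differential inequality for $\int_\Omega\grad a\cdot\D\grad a$ and $\int_\Omega|\grad w|^4$ closed by elliptic regularity for $-\div(\D\grad\,\cdot\,)$. The only sketch-level imprecision is the phrase \enquote{a bound for $\grad a$ in $L^\infty((0,T);\L{4})$}: the testing of the $a$-equation actually yields $\sup_t\int_\Omega\grad a\cdot\D\grad a$ plus dissipative control of $\int_0^T\int_\Omega|\grad a|^4$ through $\int|\div(\D\grad a)|^2$, and only the coupled Gronwall argument (which you do invoke) closes the loop, since that testing procedure itself produces a $\int_\Omega|\grad w|^4$ term on the right.
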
\noindent
	This result, while of course also of independent interest, will then serve as a building block for the construction of weak solutions to the same system under much more relaxed restrictions on $\D$ and the initial data. Chiefly, global positivity of the matrix $\D$ is not necessarily needed anymore and is instead replaced by a set of much weaker but more specific regularity assumptions.
	\\[0.5em]
	The first such regularity property concerns the divergence of $\D$ (applied column-wise) and how it can be estimated by the (potentially degenerate) scalar product induced by $\D$.
	\begin{definition}\label{definition:div_regularity} Let $\Omega \subseteq \R^n$, $n\in\N$, be a bounded domain with a smooth boundary.
		We then say a positive semidefinite $\D = (\D_1 \,\dots\, \D_n) \in L^1(\Omega; \R^{n \times n})$ with $\div \D \defs (\div \D_1 \,, \dots, \div \D_n)\in L^1(\Omega; \R^n)$ allows for a \emph{divergence estimate} with exponent $\divrege \in [\frac{1}{2},1)$ if there exists $\divregc \geq 0$ such that
		\begin{equation} \label{regularity_div}
		\int_\Omega \left|(\div \D) \cdot \Phi\right| \leq \divregc \left( \int_\Omega  \left| \Phi \cdot \D \Phi \right|^\divrege + 1 \right)
		\end{equation}
		for all $\Phi \in C^0(\overline{\Omega};\R^n)$.
	\end{definition}
	\begin{remark}\label{remark:div_regularity_consequence}
		Note that if $\D \in C^0(\overline{\Omega};\R^{n\times n})$, $\D$ allowing for a divergence estimate with exponent $\beta \in (\frac{1}{2}, 1)$ implies that $\div \D \in L^\frac{2\beta}{2\beta - 1}(\Omega; \R^n) \subseteq L^2(\Omega; \R^n) $. This stems from the fact that the estimate (\ref{regularity_div}) essentially means that the functional $\Phi \mapsto \int_\Omega (\div \D) \cdot \Phi$ is an element of $(L^{2\beta}(\Omega; \R^n))^*$, which is isomorphic to $L^{\frac{2\beta}{2\beta - 1}}(\Omega;\R^n)$.
	\end{remark}
	\begin{remark}\label{remark:div}
		It is fairly easy to verify that any smooth, positive definite $\D$ allows for such an estimate with the optimal exponent $\beta = \frac{1}{2}$.  
		Let us therefore now briefly illustrate that the above property is also achievable for less regular $\D$, which are e.g.\ at some points in $\Omega$ only positive semidefinite, by giving some examples. While we will not necessarily fully explore these examples and leave out some of the more cumbersome corner cases for ease of presentation, they will accompany us throughout this section as a tool to give some intuition for later introduced definitions as well as to give concrete examples for degenerate cases in which weak solutions can still be constructed.
		\\[0.5em]
		We will first take a look at the prototypical case of a matrix-valued function $\D_1$ on a ball with a single degenerate point in the origin, or more precisely we will consider $\D_1(x) \defs |x|^\prote I$ on $\Omega \defs B_1(0) \subseteq \R^n$, $n \in \N$, $I$ being the identity matrix and $\prote$ being some positive real number. 
		\\[0.5em] 
		As $\div \D_1(x) = \grad (|x|^\prote) = \prote |x|^{\prote - 2}x$ almost everywhere, we can estimate 
		\begin{align*}
		\int_\Omega \left|(\div \D_1) \cdot \Phi\right| &\leq \prote\int_\Omega |x|^{\prote - 1}\left|\Phi\right| = \prote \left\| |x|^{\frac{\prote}{2} - 1} (\Phi \cdot |x|^{\prote} I \Phi)^\frac{1}{2} \right\|_\L{1} \\
		&\leq s \left\| |x|^{\frac{\prote}{2} - 1} \right\|_\L{\frac{2\beta}{2\beta - 1}} \left\|(\Phi \cdot |x|^{\prote} I \Phi)^\frac{1}{2} \right\|_\L{2\beta}\\
		&\leq s  \left\| |x|^{\frac{s}{2} - 1} \right\|_\L{\frac{2\beta}{2\beta - 1}} \left( \int_\Omega  \left( \Phi \cdot \D_1 \Phi \right)^\beta  + 1 \right)
		\end{align*}
		for all $\beta \in (\frac{1}{2}, 1)$ and $\Phi \in C^0(\overline{\Omega};\R^n)$ using the Hölder inequality as well as Young's inequality. As $|x|^{\frac{\prote}{2} - 1} \in \L{\frac{2\beta}{2\beta - 1}}$ if and only if $\frac{2\beta}{2\beta - 1} (\frac{s}{2} - 1) > - n$, the prototypical case discussed above fulfills the divergence estimate for all $\beta \in (\frac{n}{s  - 2 + 2n},1) \cap (\frac{1}{2}, 1)$. Note that for $s > 2 - n$, which in two or more dimensions is always ensured, the set $(\frac{n}{s  - 2 + 2n},1) \cap (\frac{1}{2}, 1)$ is never empty and therefore our prototypical example always has the discussed property for all positive $s$ and some appropriate $\beta$. 
		\\[0.5em]
		To illustrate that our framework also supports analysis of singularities occurring on higher dimensional manifolds, let us further consider the similar prototypical example $\D_2(x_1, \dots, x_n) \defs |x_1|^s I$ on the same set $\Omega$ with $s$ now being a real number greater than 1. As here $\div \D_2(x_1, \dots, x_n) = (s |x_1|^{s-2}x_1, 0, \dots, 0)$ almost everywhere, we gain that $\D_2$ has the property laid out in \Cref{definition:div_regularity} for all $\beta \in (\frac{1}{s}, 1) \cap (\frac{1}{2}, 1)$ by a similar argument as for the previous example.
		\\[0.5em] 
		As to be expected in both cases, smaller values of $s$ result in the divergence estimate only holding for ever larger exponents $\beta$. As we will see in our theorem regarding the existence of weak solutions at the end of this section, these larger values of $\beta$ will necessitate stronger regularizing influence from the logistic source term to compensate.
	\end{remark}
	\noindent
	Before we can now approach the second regularity property of this section as well as properly defining what we in fact mean by weak solutions in this paper, we need to first introduce a set of function spaces. Said spaces are generally fairly straightforward generalizations of standard Sobolev and Lebesgue spaces incorporating $\D$ as well as some spaces derived from them, which are more specific to our setting. For a more thorough discussion of e.g.\ the degenerate Sobolev spaces introduced below, we refer the reader to \cite{SawyerDegenerateSobolevSpaces2010}.
	\\[0.5em]
	We will further take the introduction of said spaces as an opportunity to present some of their most important properties for our purposes immediately after defining them. 
	\begin{definition}\label{definition:spaces}
		Let $\Omega\subseteq\R^n$, $n\in\N$, be a bounded domain with a smooth boundary and $p\in [1,\infty)$. \\[0.5em] 
		We then define the Sobolev-type space
		\[
			W^{1,p}_\mathrm{div}(\Omega;\R^{n\times n}) \defs \left\{ M \in L^p(\Omega; \R^{n\times n}) \; \middle| \; \div M \in L^p(\Omega; \R^n) \right\}
		\]
		with the norm
		\[
			\|M\|_{W^{1,p}_\mathrm{div}(\Omega;\R^{n\times n})} \defs \|M\|_{L^p(\Omega; \R^{n\times n})} + \|\div M\|_{L^p(\Omega; \R^n)}.
		\]
		Herein, the divergence of a square matrix $M = (M_1\, \dots \, M_n)$ is defined as $\div M \defs (\div M_1,\, \dots, \div M_n)$.
		\\[0.5em]
		Let now $\D \in C^0(\overline{\Omega};\R^{n\times n})$ be positive semidefinite everywhere. We then define the Lebesgue-type space $\LD{p}$ as the set of all measurable $\R^n$-valued functions $\Phi$ on $\Omega$ 
		with finite seminorm
		\[
		\| \Phi \|_{\LD{p}} \defs \left(\int_\Omega \left( \Phi \cdot \D \Phi \right)^\frac{p}{2} \right)^\frac{1}{p}
		\]
		modulo all of those functions with $\| \Phi \|_{\LD{p}} = 0$ in the same vain as the standard Lebesgue spaces.
		\\[0.5em]
		Furthermore, we define the Sobolev-type spaces $\WD{p}$ as the completion of $C^\infty(\overline{\Omega})$ in the norm 
		\[
		\| \phi \|_\WD{p} \defs \| \phi \|_\L{p} + \| \grad \phi \|_{\LD{p}} 
		\]
		in the same vain as the standard Sobolev spaces. It is straightforward to see that each space $\WD{p}$ can be interpreted as a subspace of $\L{p} \times \LD{p}$ in a natural way and thus elements of these spaces can be written as tuples $(\phi, \Phi)$. As such, there exist the natural continuous projections 
		\[P_1 : \WD{p} \rightarrow L^p(\Omega) \stext{ and } P_2: \WD{p} \rightarrow \LD{p}\] 
		associated with this representation.
	\end{definition}
	\begin{remark}
		For a more comprehensive exploration of these spaces and their properties see e.g.\ \cite{SawyerDegenerateSobolevSpaces2010}. 
		\\[0.5em]
		We will now give a brief overview of the properties the above spaces retain from the standard Sobolev and Lebesgue spaces as well as some of the differences. As most of the proofs translate directly from standard Sobolev theory or are laid out in \cite{SawyerDegenerateSobolevSpaces2010}, we will only list the properties we are interested in without extensive argument.
		\\[0.5em]
		First of all by construction, $W_\mathrm{div}^{1,p}(\Omega;\R^{n \times n})$, $\LD{p}$ and $\WD{p}$ are Banach spaces, which are reflexive if $p \in (1,\infty)$, by essentially the same arguments as for the standard Sobolev and Lebesgue spaces and, for $p = 2$, they are in fact Hilbert spaces with the natural inner products. It is further easy to see that, if $(\phi, \Phi)$ is a strong or weak limit of a sequence $(\phi_n, \Phi_n)_{n\in\N} \subseteq \WD{p}$, the function $\phi\in\L{p}$ coincides with the pointwise almost everywhere limit of the sequence $(\phi_n)_{n\in\N}$ if it exists due to $P_1$ being continuous regarding both topologies and well-known results about strong and weak convergence in $\L{p}$.
		\\[0.5em]
		As opposed to the classical Sobolev spaces, the spaces $\WD{p}$ can not necessarily be understood as subspaces of the spaces $\L{p}$ because their equivalents to the weak gradients in the classical Sobolev spaces are not necessarily unique here, meaning essentially that $P_1$ is not always injective.\ (For an example of this, see \cite[p.\ 1877]{SawyerDegenerateSobolevSpaces2010}). Given that this can be problematic when deriving analogues to the (compact) embedding properties of Sobolev spaces for our weaker variants, let us now briefly note that, under sufficient regularity assumptions for $\D$, the spaces $\WD{p}$ do in fact embed into the spaces $\L{p}$ again. In particular if $p = 2$, which is the parameter choice we are most interested here, this is the case if $\sqrt{\D} \in W_{\mathrm{div}}^{1,2}(\Omega; \R^{n \times n})$ according to Lemma 8 from \cite{SawyerDegenerateSobolevSpaces2010}. 	
		\\[0.5em]
		While it presents a slight abuse of notation, we will in a similar fashion to \cite{SawyerDegenerateSobolevSpaces2010} use $\phi$ to mean $P_1(\phi) \in \L{p}$ for elements $\phi \in \WD{p}$ when unambiguous and generally use the convention $\grad \phi = P_2(\phi)$ even if $\grad \phi$ is not necessarily the actual weak derivative. We will further often write
		\[
		\|\phi\|_\WD{p} \stext{ for } \|(\phi,\grad \phi)\|_\WD{p}
		\]
		to simplify the notation in later arguments.
		If $\phi$ is additionally an element of $C^1(\overline{\Omega})$, we will always assume $\grad \phi$ to be equal to the classical derivative, of course.
	\end{remark}
	\noindent Having established these function spaces, we can now clearly state the second and last regularity property for $\D$ we are interested in. It is a simple compact embedding property, which is mainly used in this paper to facilitate application of the well-known Aubin--Lions lemma. 
	\begin{definition}\label{definition:comp_regularity} Let $\Omega \subseteq \R^n$, $n\in\N$, be a bounded domain with a smooth boundary.
		We say a positive semidefinite $\D \in C^0(\overline{\Omega};\R^{n \times n})$ allows for a \emph{compact $L^1(\Omega)$ embedding} if $\WD{2}$ embeds compactly into $\L{1}$.
	\end{definition}
	\begin{remark}
		Let us briefly note that any $\D$, which is equal to zero on any open subset $U$ of $\Omega$, cannot fulfill the property laid out in \Cref{definition:comp_regularity} as it is well documented that $L^2(U)$, which is equal to $W^{1,2}_\D(U)$ in this case, does not embed compactly into $L^1(U)$. 
	\end{remark}
	\noindent We will now give some additional criteria for the above compact embedding property to not only make our results easier to use in application but also to help us prove that both of the examples discussed in \Cref{remark:div} in fact fulfill it.
	\begin{lemma}\label{lemma:comp_regularity_criteria}
		Let $\Omega \subseteq \R^n$, $n\in\N$, be a bounded domain with a smooth boundary and $N \subseteq \Omega$ be a relatively closed set in $\Omega$ with measure zero. Let then \[
			\Omega_{N,\eps}  \defs \left\{ x \in \Omega \; \middle| \; \mathrm{dist}(x, N\cup\partial\Omega) > \eps \right\}
		\] 
		and let 
		$\D \in C^0(\overline{\Omega};\R^{n \times n})$ be positive semidefinite and fulfill $\sqrt{\D} \in W^{1,2}_{\mathrm{div}}(\Omega;\R^{n \times n})$. If 
		\begin{enumerate}
			\item $\WD{2}$ embeds compactly into $L^1_\loc(\Omega\setminus N)$ or
			\item $\D$ is positive definite on $\Omega\setminus N$ and there exists $\eps_0 > 0$ such that $W^{1,2}(\Omega_{N,\eps})$ embeds compactly into $L^1(\Omega_{N,\eps})$ for all $\eps \in (0,\eps_0)$ or
			\item $\D$ is positive definite on $\Omega\setminus N$ and there exists $\eps_0 > 0$ such that $\Omega_{N,\eps}$ has a Lipschitz boundary for all $\eps \in (0,\eps_0)$,
		\end{enumerate}
		then $\D$ allows for a compact $L^1(\Omega)$ embedding.
	\end{lemma}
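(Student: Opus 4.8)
The plan is to reduce all three cases to a single gluing argument over the exhausting family $(\Omega_{N,\eps})_{\eps>0}$. I would begin by collecting a few elementary facts. Since $\sqrt{\D}\in W^{1,2}_{\mathrm{div}}(\Omega;\R^{n\times n})$, Lemma~8 of \cite{SawyerDegenerateSobolevSpaces2010} makes $\WD{2}$ a genuine subspace of $\L{2}$, so $P_1:\WD{2}\to\L{2}$ is a continuous injection and every bounded sequence in $\WD{2}$ is bounded in $\L{2}$, hence equi-integrable in $\L{1}$ by Hölder's inequality. Since $N$ is relatively closed in $\Omega$, the set $N\cup\partial\Omega$ is closed in $\R^n$ and lies at distance at least $\eps$ from $\Omega_{N,\eps}$, so $\overline{\Omega_{N,\eps}}$ is a compact subset of $\Omega\setminus N$; moreover $\bigcap_{\eps>0}(\Omega\setminus\Omega_{N,\eps})=\Omega\cap(N\cup\partial\Omega)=N$ has measure zero, whence $|\Omega\setminus\Omega_{N,\eps}|\to0$ as $\eps\to0$. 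Finally, if $\Omega_{N,\eps}$ has Lipschitz boundary then, being bounded with compact closure and hence having finitely many components, it is a bounded Lipschitz domain, so the Rellich--Kondrachov theorem provides a compact embedding $W^{1,2}(\Omega_{N,\eps})\hookrightarrow L^1(\Omega_{N,\eps})$ (as $1<2^*$ in every dimension); thus (3) implies (2).

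I would then show that, under either (1) or (2), for all sufficiently small $\eps>0$ the linear map $T_\eps:\WD{2}\to L^1(\Omega_{N,\eps})$, $\phi\mapsto P_1(\phi)|_{\Omega_{N,\eps}}$, is compact. Under (1) this is immediate: convergence in $L^1_\loc(\Omega\setminus N)$ entails $L^1$-convergence on the compact set $\overline{\Omega_{N,\eps}}\subseteq\Omega\setminus N$. Under (2), the key point --- which I expect to be the technical heart of the argument --- is that $\D$ is continuous and positive definite on the compact set $\overline{\Omega_{N,\eps}}\subseteq\Omega\setminus N$, hence uniformly positive definite there, say $\Phi\cdot\D(x)\Phi\geq c_\eps|\Phi|^2$ for $x\in\overline{\Omega_{N,\eps}}$. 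Approximating $\phi\in\WD{2}$ by $\phi_m\in C^\infty(\overline{\Omega})$ in the $\WD{2}$-norm and using this lower bound shows that $(\phi_m)$ is Cauchy in $W^{1,2}(\Omega_{N,\eps})$; therefore $P_1(\phi)|_{\Omega_{N,\eps}}\in W^{1,2}(\Omega_{N,\eps})$ with weak gradient $P_2(\phi)|_{\Omega_{N,\eps}}$, and $T_\eps$ factors through a bounded restriction $\WD{2}\to W^{1,2}(\Omega_{N,\eps})$ of norm at most $\max(1,c_\eps^{-1/2})$ followed by the compact embedding from (2). Hence $T_\eps$ is compact.

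To conclude, let $(\phi_k)_{k\in\N}\subseteq\WD{2}$ be bounded. By a diagonal argument over $\eps=\eps_0 2^{-j}$, $j\in\N$ --- automatic from the $L^1_\loc$ convergence in case (1), and obtained in case (2) by iterating the compactness of the $T_\eps$ --- I may assume, after passing to a subsequence, that $(P_1(\phi_k))$ converges in $L^1(\Omega_{N,\eps_0 2^{-j}})$ for every $j$. Given $\delta>0$, the $\L{2}$-bound together with $|\Omega\setminus\Omega_{N,\eps}|\to0$ lets me fix $j$ with $\|P_1(\phi_k)\|_{L^1(\Omega\setminus\Omega_{N,\eps_0 2^{-j}})}<\delta/3$ uniformly in $k$, and the $L^1(\Omega_{N,\eps_0 2^{-j}})$-Cauchy property then yields $\|P_1(\phi_k)-P_1(\phi_l)\|_{\L{1}}<\delta$ for $k,l$ large. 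So $(P_1(\phi_k))$ is Cauchy in $\L{1}$ and converges there, which is precisely the asserted compact embedding $\WD{2}\hookrightarrow\L{1}$. What remains is the routine verification of the elementary facts above and the standard bookkeeping in the diagonal extraction.
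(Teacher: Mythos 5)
Your proof is correct and follows essentially the same route as the paper: reduce (3) to (2) via Rellich--Kondrachov, reduce (2) to local compactness using the uniform positive definiteness of $\D$ on the compact sets $\overline{\Omega_{N,\eps}}$, extract a diagonal subsequence converging in every $L^1(\Omega_{N,\eps})$, and upgrade to $L^1(\Omega)$ convergence using the uniform $L^2(\Omega)$ bound. The only (harmless) deviation is in the last step, where you run a direct Cauchy argument via $|\Omega\setminus\Omega_{N,\eps}|\to 0$ and H\"older, whereas the paper passes to an a.e.\ convergent subsequence and invokes Vitali's theorem with the de La Vall\'ee Poussin criterion.
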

	\begin{proof}
		Due to our assumption that $\sqrt{\D} \in W^{1,2}_\mathrm{div}(\Omega;\R^{n\times n})$, Lemma 8 from \cite{SawyerDegenerateSobolevSpaces2010} immediately yields that the projection $P_1 :  \WD{2}\rightarrow L^2(\Omega) \subseteq L^1(\Omega)$ from \Cref{definition:spaces} is injective and thus provides us with a continuous embedding of $\WD{2}$ into $L^1(\Omega)$. It thus only remains to show that this embedding is in fact compact given the various criteria outlined above.
		\\[0.5em]
		To do this, we first fix a bounded sequence $(\varphi_k)_{k\in\N} \subseteq \WD{2}$. We then only need to construct a subsequence of $(\varphi_k)_{k\in\N}$ that converges in $L^1(\Omega)$ to some function $\varphi$ to prove our desired outcome. As it is further possible to find another sequence $(\psi_k)_{k\in\N} \subseteq C^\infty(\overline{\Omega})$ such that $\|\varphi_k - \psi_k\|_\L{1} \leq \sqrt{|\Omega|} \|\varphi_k - \psi_k\|_\WD{2} \leq \frac{1}{k}$ by definition of $\WD{2}$, we can further assume that $\varphi_k \in C^\infty(\overline{\Omega})$ for all $k$ without loss of generality.
		\\[0.5em]
		If $\WD{2}$ now embeds compactly into $L^1_\loc(\Omega\setminus N)$, we can choose a subsequence $(\varphi_{k_j})_{j\in\N}$ and function $\varphi: \Omega \rightarrow \R$ such that $\varphi_{k_j} \rightarrow \varphi$ in all $L^1(\Omega_{N,\eps})$, $\eps > 0$, as $j \rightarrow \infty$. As by our assumptions $N\cup\partial\Omega$ is closed and thus $\Omega\setminus N = \bigcup_{k\in\N} \Omega_{N, 1/k}$, we can then employ a standard diagonal sequence argument to gain yet another subsequence, which we will again call $(\varphi_{k_j})_{j\in\N}$ for convenience, with the property that $\varphi_{k_j} \rightarrow \varphi$ almost everywhere in $\Omega\setminus N$  and thus almost everywhere in all of $\Omega$ as $j \rightarrow \infty$ because $N$ is a null set. Given that the thus constructed subsequence is further bounded in $L^2(\Omega)$ due to it being bounded in $\WD{2}$, we can use Vitali's theorem and the de La Valleé Poussin criterion for uniform integrability (cf.\ \cite[pp.\ 23-24]{DellacherieProbabilitiesPotential1978}) to conclude that $\varphi_{k_j} \rightarrow \varphi$ in $L^1(\Omega)$ as well, yielding the first part of our result.
		\\[0.5em]
		If $\D$ is positive definite on $\Omega\setminus N$, then for every $\eps > 0$ there exists $K(\eps) > 0$ such that $\D > K(\eps)$ on $\Omega_{N,\eps}$ due to the continuity of $\D$ and the fact that $\overline{\Omega_{N,\eps}} \subseteq \Omega\setminus N$ is compact. Thus, the norms of the spaces $W^{1,2}(\Omega_{N,\eps})$ and $W_\D^{1,2}(\Omega_{N,\eps})$ are equivalent. As such, the sequence $(\varphi_k)_{k\in\N}$ is bounded in all of the spaces $W^{1,2}(\Omega_{N,\eps})$, $\eps > 0$. Due to our further assumption that there exists $\eps_0 > 0$ such that $W^{1,2}(\Omega_{N,\eps})$ embeds compactly into $L^1(\Omega_{N,\eps})$ for all $\eps \in (0,\eps_0)$ and the fact that any compact set $K\subseteq \Omega\setminus N$ is a subset of some $\Omega_{N,\eps}$ as another consequence of $\Omega\setminus N$ being equal to $\bigcup_{\eps > 0} \Omega_{N, \eps}$, a standard diagonal sequence argument yields a subsequence along which the functions $\varphi_{k}$ converge to some $\varphi$ in $L^1_\loc(\Omega\setminus N)$. Combining this with the arguments from the previous paragraph then yields the second part of our result.
		\\[0.5em]
		To now complete the proof, we first note that \cite[Theorem 6.3]{AdamsSobolevSpaces2003} states that a Lipschitz boundary condition for the sets $\Omega_{N,\eps}$ ensures the Sobolev embedding necessary for our second result and thus the third result follows directly from the second.
	\end{proof}
	\begin{remark}\label{remark:comp}
	Going briefly back to the examples introduced in \Cref{remark:div}, we see that $\div \sqrt{\D_1(x)} = \frac{s}{2}|x|^{\frac{s}{2} - 2}x$, $s > 0$, and $\div \sqrt{\D_2(x_1, \dots, x_n)} = (\frac{s}{2}|x_1|^{\frac{s}{2} - 2}x_1, 0, \dots, 0)$, $s > 1$, are both elements of $L^2(\Omega; \R^n)$ and thus $\sqrt{\D_1}, \sqrt{\D_2} \in W^{1,2}_\mathrm{div}(\Omega; \R^{n\times n})$ in dimensions two or higher. Furthermore due to the fairly straightforward geometry of the degeneracy set $N$ in both cases, it is easy to verify that both examples also fulfill the third criterion in \Cref{lemma:comp_regularity_criteria} and thus both $\D_1$ and $\D_2$ allow for a compact $L^1(\Omega)$ embedding in accordance with \Cref{definition:comp_regularity}.
	\end{remark}
	\noindent
	While we have now invested some effort into formalizing the restrictions on $\D$ necessary for our later construction of weak solutions, we have yet to clarify what we in fact mean by a weak solution to (\ref{problem}). Let us now rectify this in the following definition.
	\begin{definition}\label{definition:weak_solution}
		Let $\Omega \subseteq \R^n$, $n\in\N$, be a bounded domain with a smooth boundary and let $\chi > 0$, $\mu > 0$ and $r\in[1,\infty)$. Let $\D \in W_\mathrm{div}^{1,q}(\Omega; \R^{n\times n}) \cap C^0(\overline{\Omega}; \R^{n \times n})$, $q \in (1,\infty)$ and $p \defs \max(2,r,\frac{q}{q-1})$. Let further $u_0, w_0 \in L^1(\Omega)$ be some initial data.
		\\[0.5em]	
		We then call a tuple of functions
		\begin{align*}
		u &\in L_\loc^1([0,\infty);\WD{1}) \cap L_\loc^p(\overline{\Omega}\times[0,\infty)), \\
		w &\in L_\loc^2([0,\infty);\WD{2})
		\end{align*}
		a weak solution of (\ref{problem}) with initial data $u_0$, $w_0$ and the above parameters if
		\begin{align*}
		\int_0^\infty \int_\Omega u \phi_t - \int_\Omega u_0 \phi(\cdot, 0) &= \int_0^\infty \int_\Omega \grad u \cdot \D \grad \phi + \int_0^\infty \int_\Omega u (\div \D) \cdot \grad \phi \\
		&- \chi \int_0^\infty \int_\Omega u \grad w \cdot \D \grad \phi - \mu \int_0^\infty \int_\Omega u(1-u^{r-1})\phi
		\end{align*}
		and 
		\[
		\int_0^\infty \int_\Omega w \phi_t - \int_\Omega w_0 \phi(\cdot, 0) = \int_0^\infty\int_\Omega uw \phi
		\]
		hold for all $\phi \in C_c^\infty(\overline{\Omega}\times[0,\infty))$.
	\end{definition}
	\noindent
	As we have at this point clearly defined the target and some of the preconditions, let us now outright state the second main theorem we endeavor to prove in this paper.
	\begin{theorem}\label{theorem:weak_solutions}
		Let $\Omega \subseteq \R^n$, $n\in\{2,3\}$, be a bounded domain with a smooth boundary, $\chi \in (0,\infty)$, $\mu \in (0,\infty)$, $\beta \in [\frac{1}{2}, 1)$, $r\in[2,\infty)$ with $\frac{\beta}{1-\beta} \leq r$ and $\D \in W^{1,2}_\mathrm{div}(\Omega;\R^{n\times n})\cap C^0(\overline{\Omega}; \R^{n\times n})$ be positive semidefinite everywhere. Let further $\D$ allow for a divergence estimate with exponent $\divrege$ (cf.\ \Cref{definition:div_regularity}) and let $\D$ allow for a compact $L^1(\Omega)$ embedding (cf.\ \Cref{definition:comp_regularity}). Finally, let $u_0 \in  \L{z[\ln(z)]_+}$ and $w_0 \in C^0(\overline{\Omega})$ be some initial data with  $\sqrt{w_0} \in W^{1,2}(\Omega)$ 
		and $u_0 \geq 0$, $w_0 \geq 0$ almost everywhere. Here, $\L{z[\ln(z)]_+}$ is the standard Orlicz space associated with the function $z \mapsto z [\ln(z)]_+$.
		\\[0.5em]
		Then there exist a.e.\ non-negative functions 
		\begin{align*}
			u &\in L_\loc^\frac{2r}{r+1}([0,\infty);\WD{\frac{2r}{r+1}}) \cap L_\loc^r(\overline{\Omega}\times[0,\infty)), \numberthis \label{eq:weak_solution_regularity_1} \\
			w &\in L_\loc^2([0,\infty);\WD{2})\cap L^\infty(\Omega\times(0,\infty)) \numberthis \label{eq:weak_solution_regularity_2}
		\end{align*} 
		that are a weak solution to (\ref{problem}) in the sense of \Cref{definition:weak_solution}.
	\end{theorem}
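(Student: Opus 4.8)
The plan is to realize $(u,w)$ as a limit of global classical solutions to suitably regularized versions of (\ref{problem}).

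\textbf{Step 1: Approximation.} Extend $\D$ continuously to a neighbourhood of $\overline{\Omega}$, fix a standard mollifier $(\rho_\eps)$ and a modulus of continuity $\omega$ of $\D$, and set $\De \defs \D \ast \rho_\eps + (\eps + \omega(\eps)) I$. Then $\De \in C^\infty(\overline{\Omega};\R^{n\times n})$ is positive definite, $\De \to \D$ uniformly on $\overline{\Omega}$, $\div\De = (\div\D)\ast\rho_\eps \to \div\D$ in $L^2(\Omega;\R^n)$, and $\D \leq \De$ in the sense of quadratic forms. After a localized modification near $\partial\Omega$ one may additionally assume $(\div\De)\cdot\nu = 0$ on $\partial\Omega$, and the construction can be arranged so that $\De$ again satisfies a divergence estimate in the sense of \Cref{definition:div_regularity} with the same exponent $\divrege$ and a constant independent of $\eps$. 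Pick $u_{0,\eps}, w_{0,\eps} \in C^{2+\vartheta}(\overline{\Omega})$, positive on $\overline{\Omega}$, satisfying $(\De\grad u_{0,\eps})\cdot\nu = (\De\grad w_{0,\eps})\cdot\nu = 0$ on $\partial\Omega$, with $u_{0,\eps}\to u_0$ in $\L{z[\ln(z)]_+}$ (so that $\sup_\eps \int_\Omega u_{0,\eps}\ln u_{0,\eps} < \infty$), $w_{0,\eps}\to w_0$ uniformly, and $\sup_\eps \int_\Omega |\grad\sqrt{w_{0,\eps}}|^2 < \infty$, which is possible since $\sqrt{w_0}\in W^{1,2}(\Omega)$. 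Finally replace $\loge$ by $\loge + \eps > 2$. \Cref{theorem:classical_solution} (in the regime $\loge+\eps>2$) then yields, for each $\eps\in(0,1)$, positive global classical solutions $(\ue,\we)$ of the correspondingly regularized system.

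\textbf{Step 2: A priori estimates.} Integrating the first equation and exploiting $\loge+\eps>2$ gives $\sup_{t}\int_\Omega\ue + \int_0^t\int_\Omega\ue^{\loge+\eps} \leq C(1+t)$, while $\we_t = -\ue\we \leq 0$ forces $0 < \we \leq \|w_{0,\eps}\|_\L{\infty} \leq \|w_0\|_\L{\infty}+1$, all bounds uniform in $\eps$. The key is an energy inequality for $\mathcal{F}_\eps(t) \defs \int_\Omega \ue\ln\ue + \tfrac{\chi}{2}\int_\Omega \tfrac{\grad\we\cdot\De\grad\we}{\we}$: differentiating and integrating by parts --- where the no-flux boundary condition makes the boundary contributions of the diffusion and taxis terms cancel, and the cross-diffusive bulk terms $\pm\chi\int_\Omega\grad\ue\cdot\De\grad\we$ cancel as well --- one arrives at
\[
\tfrac{d}{dt}\mathcal{F}_\eps + \int_\Omega \tfrac{\grad\ue\cdot\De\grad\ue}{\ue} + \tfrac{\chi}{2}\int_\Omega \tfrac{\ue\,\grad\we\cdot\De\grad\we}{\we} = -\int_\Omega \grad\ue\cdot(\div\De) + \logc\int_\Omega(1+\ln\ue)\ue(1-\ue^{\loge-1+\eps}).
\]
Applying the (uniform) divergence estimate with $\Phi = \grad\ue$ and Young's inequality bounds $|\int_\Omega\grad\ue\cdot(\div\De)|$ by $\tfrac12\int_\Omega\tfrac{\grad\ue\cdot\De\grad\ue}{\ue} + C\int_\Omega\ue^{\divrege/(1-\divrege)} + C$; since $\divrege/(1-\divrege)\leq\loge<\loge+\eps$, the middle integral is controlled by $\int_\Omega\ue^{\loge+\eps}$ and hence, for $\ue$ large, by a small multiple of $\int_\Omega\ue^{\loge+\eps}\ln\ue$ plus a constant. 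The logistic term is treated similarly, its leading part being $-\logc\int_\Omega\ue^{\loge+\eps}\ln\ue$ with the positive contributions absorbable because $\loge+\eps>1$. Integrating in time, with $\mathcal{F}_\eps(0)\leq C$ from Step 1, yields the $\eps$-uniform bound
\[
\sup_{t\in[0,T]}\!\Big(\int_\Omega\ue\ln\ue + \int_\Omega \tfrac{\grad\we\cdot\De\grad\we}{\we}\Big) + \int_0^T\!\!\int_\Omega \tfrac{\grad\ue\cdot\De\grad\ue}{\ue} + \int_0^T\!\!\int_\Omega \ue^{\loge+\eps}\ln\ue \leq C(1+T).
\]

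\textbf{Step 3: Compactness and passage to the limit.} Using $\D\leq\De$ this gives: $\sqrt{\ue}$ bounded in $L^2_\loc([0,\infty);\WD{2})$ (as $\grad\sqrt{\ue}\cdot\D\grad\sqrt{\ue} = \tfrac14\ue^{-1}\grad\ue\cdot\D\grad\ue \leq \tfrac14\ue^{-1}\grad\ue\cdot\De\grad\ue$); $\we$ bounded in $L^\infty((0,\infty);\WD{2})\cap L^\infty(\Omega\times(0,\infty))$; and, via Hölder together with $\int_0^T\int_\Omega\ue^\loge\leq C(1+T)$, $\ue$ bounded in $L^{2\loge/(\loge+1)}_\loc([0,\infty);\WD{2\loge/(\loge+1)})$. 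Moreover $\{\ue\}$ is bounded in $L^{\loge+\eps}(\Omega\times(0,T))$ and, by the $L\ln L$ bound, equi-integrable in $x$ uniformly in $t$. Testing the two equations against time-independent elements of $W^{K,\infty}(\Omega)$, $K$ large, bounds $\partial_t\ue$ in $L^1_\loc([0,\infty);(W^{K,\infty}(\Omega))^\ast)$ and $\partial_t\we$ in $L^1_\loc([0,\infty);L^2(\Omega))$. Since $\D$ allows a compact $L^1(\Omega)$ embedding, the Aubin--Lions lemma yields $\we\to w$ in $L^2_\loc([0,\infty);L^1(\Omega))$ (hence, by the $L^\infty$ bound, in every $L^q_\loc(\overline{\Omega}\times[0,\infty))$ and a.e.) and, through a nonlinear Aubin--Lions-type argument applied to $\ue=(\sqrt{\ue})^2$ with $\sqrt{\ue}$ bounded in $L^2_\loc\WD{2}$ and the above bound on $\partial_t\ue$, also $\ue\to u$ a.e.; by equi-integrability and Vitali's theorem this upgrades to $\ue\to u$ in $L^\loge_\loc(\overline{\Omega}\times[0,\infty))$ and $\ue^{\loge+\eps}\to u^\loge$ in $L^1_\loc$. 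Passing to a further subsequence, $\grad\ue\rightharpoonup\grad u$ in $L^{2\loge/(\loge+1)}_\loc\LD{2\loge/(\loge+1)}$ and $\grad\we\rightharpoonup\grad w$ in $L^2_\loc\LD{2}$ (identifying the $P_1$-limits with $u$, $w$); combined with $\De\to\D$ uniformly and $\div\De\to\div\D$ in $L^2$, and with the strong convergence of $\ue$, all terms in the weak formulation of \Cref{definition:weak_solution} pass to the limit (the products $\ue\De\grad\we$, $\ue(\div\De)$, $uw$, $\ue^{\loge+\eps}$ converging in $L^1_\loc$ as pairings of a strongly and a weakly, respectively strongly, convergent factor), which exhibits $(u,w)$ as a weak solution with the stated regularity (\ref{eq:weak_solution_regularity_1})--(\ref{eq:weak_solution_regularity_2}).

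\textbf{Main obstacles.} I expect two points to dominate the work. The first is the energy inequality of Step 2: the exact cancellation of the cross-diffusive and boundary terms rests on the precise myopic-diffusion/haptotaxis/no-flux structure, and the term $\int_\Omega\grad\ue\cdot(\div\De)$ can be absorbed only via the divergence estimate --- which is precisely why the structural constraint $\divrege/(1-\divrege)\leq\loge$ linking the degeneracy of $\D$ to the required strength of the logistic source appears --- and only because the approximation $\De$ can be built to inherit that estimate with an $\eps$-independent constant (its boundary behaviour being the delicate part). The second is the compactness of $(\ue)_\eps$ in Step 3: since $\grad\ue$ is controlled only through the possibly degenerate form $\grad\ue\cdot\De\grad\ue$, the spaces $\WD{p}$ need not embed into any $L^s(\Omega)$ with $s>1$, so classical Rellich-type compactness is unavailable; routing through $\sqrt{\ue}\in L^2_\loc([0,\infty);\WD{2})$, the assumed compact $L^1(\Omega)$ embedding, and the $L\ln L$ equi-integrability is what makes the limit passage possible.
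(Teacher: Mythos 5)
Your proposal follows the same overall strategy as the paper's proof: regularize $\D$, the initial data and the logistic exponent so that \Cref{theorem:classical_solution} applies, derive the $\eps$-uniform energy inequality (same functional, same cancellation of the cross-diffusive terms $\pm\chi\int_\Omega\grad\ue\cdot\De\grad\we$, same use of the divergence estimate together with $\frac{\beta}{1-\beta}\le r$ to absorb $\int_\Omega\ue^{\beta/(1-\beta)}$ into the logistic dissipation), and then pass to the limit via Aubin--Lions using the assumed compact $L^1(\Omega)$ embedding. One sub-step differs genuinely: for time compactness of the first component you bound $\partial_t\ue$ in a dual space and invoke a nonlinear Aubin--Lions-type lemma for $\ue=(\sqrt{\ue})^2$, whereas the paper bounds $\partial_t(\ue^{1/2})$ directly in $(W^{n+1,2}(\Omega))^*$ (\Cref{lemma:dual_bounds_weak}) so that the classical Aubin--Lions lemma applies to $\sqrt{\ue}$ itself; the price the paper pays is the additional estimates $\int_0^T\int_\Omega\ue^{-3/2}(\grad\ue\cdot\De\grad\ue)\le C$ and $\int_0^T\int_\Omega\ue^{-1/2}|(\div\De)\cdot\grad\ue|\le C$ from testing the first equation with $-\ue^{-1/2}$ (\Cref{lemma:additional_baseline}). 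Your route trades those testing procedures for a less standard compactness tool; if you keep it, state and cite the nonlinear lemma precisely, since its hypotheses (spatial compactness of $\sqrt{\ue}$ in a possibly degenerate space plus equicontinuity in time of $\ue$) must be matched exactly.

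The one point that is genuinely under-justified as written is the limit passage in $\int\grad\ue\cdot\De\grad\phi$ and $\int\ue\grad\we\cdot\De\grad\phi$. ``Strongly convergent factor paired with weakly convergent factor'' does not apply directly: the weak convergences live in the $\D$-weighted spaces while the integrands carry the weight $\De$, and $\grad\ue$ is \emph{not} bounded in any unweighted $L^p$ uniformly in $\eps$ --- the only available unweighted bound is $\|\grad\ue\|_{L^1(\Omega\times(0,T))}\lesssim\lambda_\eps^{-1/2}$, where $\lambda_\eps$ is the ellipticity you add to $\De$. The paper devotes \Cref{lemma:additional_convergence_properties} to exactly this, arranging $\D+\eps\le\De\le\D+3\eps$ so that $\|\De-\D\|_\L{\infty}\,\|\grad\ue\|_{L^1(\Omega\times(0,T))}=O(\eps^{1/2})$. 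Your choice $\De=\D\ast\rho_\eps+(\eps+\omega(\eps))I$ happens to satisfy the same balance, since $\|\De-\D\|_\L{\infty}\lesssim\eps+\omega(\eps)\lesssim\lambda_\eps$ and hence the error term is $O(\lambda_\eps^{1/2})$, but this is a necessary design constraint on the approximation rather than a consequence of $\De\to\D$ uniformly, and it must survive both your unspecified boundary modification and the requirement $\D\le\De$ that you use elsewhere. Make this quantitative step explicit; without it the convergence of the diffusion and taxis terms does not follow from the convergences you list.
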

	\begin{remark}
		In light of the above theorem, we take another look at our prototypical examples $\D_1(x) \defs |x|^s I$, $s > 0$, and $\D_2(x_1, \dots, x_n) \defs |x_1|^s I$, $s > 1$, on $\Omega \defs B_1(0)\subseteq \R^n$, $n\in\{2,3\}$. Given the discussion in \Cref{remark:div}, we know that, if we assume
		\begin{equation}\label{eq:remark_div_consequence_1}
			r\geq \frac{\beta}{1-\beta} > \frac{\frac{n}{s-2+2n}}{1 - \frac{n}{s-2+2n}} = \frac{n}{s-2+n},  
		\end{equation}
		then $\D_1$ allows for the necessary divergence estimate with an exponent $\beta$ fulfilling $\frac{\beta}{1-\beta} \leq r$. Similarly if we assume that 
		\begin{equation}\label{eq:remark_div_consequence_2}
			r \geq \frac{1}{s-1},
		\end{equation}
		the same holds true for $\D_2$. Further due to the arguments presented in \Cref{remark:comp}, both $\D_1$ and $\D_2$ allow for a compact $L^1(\Omega)$ embedding. Therefore, the above theorem means that, for sufficiently regular initial data $u_0$, $w_0$ and if either $\D = \D_1$ and $r$ and $s$ satisfy (\ref{eq:remark_div_consequence_1}) or $\D = \D_2$ and $r$ and $s$ satisfy (\ref{eq:remark_div_consequence_2}), weak solutions to (\ref{problem}) in fact exist in two and three dimensions.
	\end{remark}
	
	\section{Existence of Classical Solutions}
	
	As the existence of classical solutions to (\ref{problem}), apart from being an interesting result by its own merits, plays an important role in our construction of their weak counterparts, we will in this section first focus on their derivation. As such, our ultimate goal for this section will be the proof of our first main result, namely \Cref{theorem:classical_solution}. The methods presented here will in many ways mirror those for similar systems with a standard Laplacian as diffusion operator. We mainly verify that the differing elements in our systems do not impede said methods.
	\\[0.5em]
	To this end, we now fix a smooth bounded domain $\Omega\subseteq\R^n$, $n\in\{2,3\}$ and system parameters $\chi \in (0,\infty)$, $\logc \in (0,\infty)$, $\loge \in [2,\infty)$ and $\D \in C^2(\overline{\Omega};\R^{n\times n})$. We further assume that $\D$ is in fact positive definite everywhere and has the property $(\div \D) \cdot \nu = 0$ on $\partial \Omega$. Given these assumptions, we can fix $M \geq 1$ such that
	\begin{equation}\label{eq:classical_D_assumption}
		\frac{1}{M} \leq \D \leq M, \;\;\;\; \|\div \D\|_\L{\infty} \leq M  \stext{ and } \|\div (\div \D)\|_\L{\infty} \leq M.
	\end{equation}
	We also fix some initial data $u_0, w_0 \in C^{2+\vartheta}(\overline{\Omega})$, $\vartheta \in (0,1)$, with $(\D \grad u_0) \cdot \nu = (\D \grad w_0) \cdot \nu = 0$ on $\partial \Omega$ and $u_0 > 0$, $w_0 > 0$ on $\overline{\Omega}$.
	\\[0.5em]
	Comparing the very strong regularity assumptions for $\D$ in this section to the much weaker ones in the following section devoted to the construction of weak solutions, the question why the gap in assumed regularity between these sections is as large as it is naturally presents itself. Let us therefore briefly address this issue. It is certainly possible to derive most of the a priori estimates, which are used in this section to argue that blow-up of local solutions is impossible, under similarly specific regularity assumptions as seen in \Cref{definition:div_regularity} or \Cref{definition:comp_regularity} (albeit with some additions). But generalizing the theory employed by us to first gain said local solutions with less regular $\D$ would necessitate Schauder and semigroup theory for potentially very degenerate operators, which is out of scope for this paper. Furthermore, we think that this result is already of interest in and of itself.
	\subsection{Existence of Local Solutions}
	After this introductory paragraph giving our rational for the assumptions about $\D$ in this section, we will now focus on the construction of local solutions to the system (\ref{problem}) as a first step in constructing global ones. As for a positive definite matrix $\D$, the diffusion operator in the first equation is strictly elliptic and therefore accessible to most of the same existence and regularity theory as the Laplacian, we will not go into detail concerning the construction of local solutions but rather refer the reader to a local existence result for a similar haptotaxis system with our operator replaced by the Laplacian in \cite{TaoGlobalClassicalSolutions2020}. 
	\begin{lemma}\label{lemma:local_solution}
		There exist $\tmax \in (0,\infty]$ and positive functions $
			u,w \in C^{2,1}(\overline{\Omega}\times[0,\tmax))
		$ such that $(u,w)$ is a classical solution to (\ref{problem}) on $\overline{\Omega}\times(0,\tmax)$ with initial data $(u_0, w_0)$ and satisfies the following blow-up criterion:
		\begin{equation} \label{eq:blowup}
		\text{ If } \tmax < \infty,  \text{ then } \limsup_{t\nearrow \tmax} \left( \|u(\cdot, t)\|_\L{\infty} + \|w(\cdot, t)\|_{W^{1,n+1}(\Omega)} \right) = \infty.
		\end{equation}
	\end{lemma}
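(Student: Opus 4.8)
The plan is to produce the local solution by a standard contraction mapping argument of the kind used for the analogous haptotaxis system with a plain Laplacian (cf.\ \cite{TaoGlobalClassicalSolutions2020}), and then to check that the structural differences in (\ref{problem}) — the operator $\div(\D\grad\cdot)$ in place of $\laplace$, the extra divergence term $\div(u\,\div\D)$, and the conormal boundary condition — do not obstruct it. The decisive point is that, under the standing assumptions of this section, $\D\in C^2(\overline{\Omega};\R^{n\times n})$ is positive definite on all of $\overline{\Omega}$, so $\phi\mapsto\div(\D\grad\phi)$ is uniformly elliptic with $C^1$ coefficients; hence its conormal realization is covered by classical linear parabolic Schauder and $L^p$ theory and generates an analytic semigroup with the usual $L^q\to W^{1,q}$ smoothing estimates. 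Using the hypothesis $(\div\D)\cdot\nu=0$ on $\partial\Omega$, the boundary condition in (\ref{problem}) collapses to $(\D\grad u)\cdot\nu=\chi(u\D\grad w)\cdot\nu$, i.e.\ a conormal condition carrying only a drift-type contribution once $w$ has been frozen; note also that the initial data are assumed to satisfy the matching compatibility relation $(\D\grad u_0)\cdot\nu=(\D\grad w_0)\cdot\nu=0$.

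Concretely, I would fix $q>n$ and $T\in(0,1)$, and work in a closed ball $\mathcal{S}$ in $C^0([0,T];W^{1,q}(\Omega))$ centered at $u_0$, with radius and $T$ to be chosen later. Given $\tilde u\in\mathcal{S}$, the second equation is a pointwise-in-$x$ linear ODE in $t$, solved explicitly by
\[
w(x,t)=w_0(x)\exp\!\left(-\int_0^t\tilde u(x,s)\d s\right),
\]
which is positive, bounded by $\|w_0\|_\L{\infty}$, and — since $w_0\in C^{2+\vartheta}(\overline{\Omega})$ — lies again in $C^0([0,T];W^{1,q}(\Omega))$ with $\grad w$ controlled in terms of $\grad\tilde u$. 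Freezing this $w$, the first equation becomes the linear problem
\[
u_t=\div(\D\grad u)+\grad u\cdot(\div\D)+u\,\div(\div\D)-\chi\div(u\D\grad w)+\logc u(1-\tilde u^{\loge-1})
\]
for $u$ with the above conormal boundary data and initial value $u_0$. Because $\D\in C^2$ gives $\div\D\in C^1$ and $\div(\div\D)\in C^0$, both bounded via (\ref{eq:classical_D_assumption}), and $\D\grad w$ has just been placed in $C^0([0,T];L^q(\Omega))$, every term past $\div(\D\grad u)$ is of lower order: the genuine drift and zero-order parts are handled directly, and the divergence-form haptotactic term $\div(u\D\grad w)$ is absorbed using the smoothing estimate of the semigroup generated by the conormal realization of $\div(\D\grad\cdot)$.

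A variation-of-constants representation thus defines a map $\Psi:\tilde u\mapsto u$, and a routine estimate — exploiting that $\tilde u\mapsto\tilde u^{\loge-1}$ and $\tilde u\mapsto w$ are locally Lipschitz on $\mathcal{S}$ — shows $\Psi$ maps $\mathcal{S}$ into itself and is a contraction once $T$ is small. Banach's fixed point theorem then yields a solution on $[0,T]$, which by parabolic Schauder bootstrapping (now on the coupled nonlinear system, using $w_0,u_0\in C^{2+\vartheta}$) belongs to $C^{2,1}(\overline{\Omega}\times[0,T])$, and which extends to a maximal interval $[0,\tmax)$ in the usual way. Positivity is then immediate: for $w$ it is read off from the explicit formula, and for $u$ it follows from the parabolic maximum principle applied to its equation — which, after the expansion above, has bounded zero-order coefficient — with strict positivity from the strong maximum principle since $u_0>0$.

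Finally, the blow-up criterion (\ref{eq:blowup}) follows by the standard continuation argument: if $\tmax<\infty$ while $\|u(\cdot,t)\|_\L{\infty}+\|w(\cdot,t)\|_{W^{1,n+1}(\Omega)}$ remained bounded on $[0,\tmax)$, one would obtain, via parabolic regularity estimates depending only on these two quantities, uniform $C^{2+\vartheta}$ bounds for $(u(\cdot,t),w(\cdot,t))$ as $t\nearrow\tmax$, and could restart the fixed-point argument from a time close to $\tmax$ — contradicting maximality. I expect this last step to be the main obstacle: one must verify that an $L^\infty$ bound on $u$ together with a $W^{1,n+1}$ bound on $w$ — the exponent $n+1>n$ being chosen precisely so that $W^{1,n+1}(\Omega)\hookrightarrow C^{0,\vartheta}(\overline{\Omega})$ by Morrey's embedding — really does control the coefficients and right-hand side of the $u$-equation (and, through the $w$-equation, of the $w$-regularity) well enough to run the Hölder- and then Schauder-regularity bootstrap. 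This is exactly the point at which one must confirm that the reasoning of \cite{TaoGlobalClassicalSolutions2020} transfers to our operator, and it is where the remaining, otherwise routine, work is concentrated.
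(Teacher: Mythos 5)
Your proposal follows essentially the same route the paper takes: the paper does not prove this lemma in detail but instead observes that, since $\D$ is positive definite on $\overline{\Omega}$, the operator $\div(\D\grad\cdot)$ is uniformly elliptic and hence the standard contraction-mapping local existence argument of \cite{TaoGlobalClassicalSolutions2020} (for the analogous system with the Laplacian) carries over verbatim, which is precisely what you carry out. Your sketch is correct and, if anything, supplies more of the routine detail than the paper chooses to present.
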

	\noindent For ease of further discussion, we now fix such a maximal local solution $(u,w)$ on $(0,\tmax)$ with initial data $(u_0, w_0)$ and the parameters as stated in the above introductory paragraphs.
	\\[0.5em]
	Before diving into the derivation of more substantial bounds for the above solution, we derive a straightforward mass bound for the first solution component as well as an $L^\infty(\Omega)$ bound for the second solution component. These bounds will not only prove useful when ruling out blow-up in this section but also serve as a baseline for bounds derived in our later efforts focused on the construction of weak solutions.
	\begin{lemma}\label{lemma:absolute_baseline}
		The inequalities
		\[
		\int_\Omega u(\cdot, t) \leq \mu |\Omega| t + \int_\Omega u_0 \stext{ and } \|w(\cdot, t)\|_\L{\infty} \leq \|w_0\|_\L{\infty}
		\]
		hold for all $t \in (0,\tmax)$.
	\end{lemma}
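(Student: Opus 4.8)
The plan is to establish the two bounds by elementary testing procedures exploiting the structure of the two equations, using only the no-flux boundary conditions already built into (\ref{problem}) and the positivity of $\D$. For the mass bound on $u$, I would integrate the first equation of (\ref{problem}) over $\Omega$. Integration by parts together with the boundary condition $(\D\grad u)\cdot\nu = \chi(u\D\grad w)\cdot\nu - u(\div\D)\cdot\nu$ causes the entire divergence-structured part $\div(\D\grad u + u\div\D) - \chi\div(u\D\grad w)$ to vanish after integration, leaving only the logistic contribution. Thus
\[
	\frac{\mathrm{d}}{\mathrm{d}t}\int_\Omega u(\cdot,t) = \logc\int_\Omega u(1-u^{r-1}) = \logc\int_\Omega u - \logc\int_\Omega u^r .
\]
Since $u>0$ and $r\ge 2$, the term $-\logc\int_\Omega u^r$ is non-positive, so one is tempted to just write $\frac{\mathrm{d}}{\mathrm{d}t}\int_\Omega u \le \logc\int_\Omega u$, but that only yields exponential growth. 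To get the claimed \emph{linear} bound $\mu|\Omega|t + \int_\Omega u_0$, I would instead use the pointwise inequality $\logc z(1-z^{r-1}) \le \logc$ valid for all $z\ge 0$ and $r\ge 2$ (the function $z\mapsto z - z^r$ attains its maximum over $[0,\infty)$ at some point $\le 1$, and that maximum is $\le 1$); integrating this gives $\frac{\mathrm{d}}{\mathrm{d}t}\int_\Omega u(\cdot,t) \le \logc|\Omega|$, and integrating in time from $0$ to $t$ yields exactly the first asserted inequality.

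For the $L^\infty$ bound on $w$, I would work pointwise in $x$. Fix $x\in\overline\Omega$; the second equation reads $w_t(x,t) = -u(x,t)w(x,t)$, an ODE in $t$ with parameter $x$. Since $u$ is continuous and positive on $\overline\Omega\times[0,\tmax)$, we have $u(x,t) \ge 0$, so for fixed $x$ the function $t\mapsto w(x,t)$ is non-increasing. Solving the linear ODE explicitly,
\[
	w(x,t) = w_0(x)\exp\!\left(-\int_0^t u(x,\sigma)\,\mathrm{d}\sigma\right) \le w_0(x) \le \|w_0\|_{L^\infty(\Omega)},
\]
where the first inequality uses $\int_0^t u(x,\sigma)\,\mathrm{d}\sigma \ge 0$. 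Taking the supremum over $x\in\Omega$ gives $\|w(\cdot,t)\|_{L^\infty(\Omega)} \le \|w_0\|_{L^\infty(\Omega)}$ for all $t\in(0,\tmax)$, as claimed. (Note this also immediately re-confirms $w>0$, consistent with \Cref{lemma:local_solution}.)

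I do not expect any genuine obstacle here; both statements are standard "baseline" estimates. The only mild subtlety is making sure the boundary terms in the mass computation really do cancel — this is precisely what the third line of (\ref{problem}) is engineered to guarantee, so one just has to apply the divergence theorem to $\int_\Omega \div(\D\grad u + u\div\D - \chi u\D\grad w)$ and read off that the resulting boundary integral $\int_{\partial\Omega}\big((\D\grad u)\cdot\nu + u(\div\D)\cdot\nu - \chi(u\D\grad w)\cdot\nu\big)$ vanishes identically by the no-flux condition. Everything else is elementary ODE analysis and the pointwise bound $z - z^r \le 1$.
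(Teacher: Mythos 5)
Your proposal is correct and follows essentially the same route as the paper: integrate the first equation so the flux terms cancel via the no-flux boundary condition, bound the logistic term pointwise by $\mu$, and integrate in time; for $w$ the paper simply notes $w_t \le 0$, which is the same observation your explicit ODE solution makes. No gaps.
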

	\begin{proof}
		Integrating the first equation in (\ref{problem}) and applying partial integration yields
		\[
		\frac{\d}{\d t}\int_\Omega u = \mu \int_\Omega u(1-u^{r-1}) \leq \mu |\Omega|
		\]
		for all $t \in (0,T)$ and therefore immediately give us the first half of our result by time integration. Given that further $w_t \leq 0$ due to the second equation in (\ref{problem}), the second half of our result follows directly as well.
	\end{proof}\noindent
	
	\subsection{A Priori Estimates}
	The next natural step after establishing local solutions with an associated blow-up criterion is of course arguing that finite-time blow-up is impossible and the maximal local solutions were in fact global all along. To do this, we will devote this section to a set of a priori estimates, which increase in strength as the section goes on until they rule out blow-up of both $u$ and $w$. 
	\\[0.5em]
	As is not uncommon in the analysis of these kinds of haptotaxis systems (cf.\ \cite{TaoGlobalClassicalSolutions2020}), we will from now consider the function $a \defs u e^{-\chi w}$ defined on $\overline{\Omega}\times[0,\tmax)$ and its associated initial data $a_0 \defs u_0 e^{-\chi w_0}$ defined on $\overline{\Omega}$ in addition to the actual solutions components $u$ and $w$ themselves. A simple computation then shows that $(a,w)$ is a classical solution of the following related system:
	\begin{equation}\label{problem_a}
	\left\{
	\begin{aligned}
	a_t &=e^{-\chi w} \div (  e^{w\chi}\D\grad a) + e^{-\chi w} \div (a e^{\chi w} (\div \D)) \\
	&\;\;\;\;+ \mu a (1-a^{r-1}e^{\chi(r-1)w}) +\chi a^2 w e^{\chi w} &&\text{ on } \Omega\times(0,\infty),\\
	w_t &= - a e^{\chi w}w  \;\;\;\; &&\text{ on } \Omega\times(0,\infty),  \\
	(\D \grad a) \cdot \nu &= -a (\div \D) \cdot \nu = 0  \;\;\;\; &&\text{ on } \partial\Omega\times(0,\infty),\\
	a(\cdot, 0) &= a_0 > 0, \;\; w(\cdot, 0) = w_0 > 0  \;\;\;\; &&\text{ on } \Omega.
	\end{aligned}
	\right.
	\end{equation}
	The key property of the above system, which makes it so useful for our purposes, is that it in a sense eliminates the taxis term or at least the explicit gradient of $w$ from the first equation (by in a sense integrating it into $a$ and its diffusion operator). This alleviates many of the normal problems associated with the taxis term in testing or semigroup based approaches used to derive a priori estimates. A second useful property of this transformation is that, by definition, bounds that do not involve derivatives are easily translated back from $a$ to $u$ as we will see later. Note however that, as soon as we want to back propagate bounds about the gradient of $a$ to $u$, the complications introduced by the taxis term come back into play, making this transformation much less useful for endeavors of this kind.  
	\\[0.5em]
	We now begin by translating the baseline estimates given in \Cref{lemma:absolute_baseline} to our newly defined function $a$ as we will henceforth focus on $(a,w)$ as our central object of analysis for quite some time. We will further for the foreseeable future work under the assumption that $\tmax < \infty$ as this is exactly the case we want to rule out by leading this assumption to a contradiction with the blow-up criterion.
	\begin{corollary}\label{corollary:absolute_baseline_a}
		If $\tmax < \infty$, there exists $C > 0$ such that
		\[
			\int_\Omega a \leq C
		\]
		for all $t\in(0,\tmax)$.
	\end{corollary}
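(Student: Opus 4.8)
The plan is to simply push the baseline estimates of \Cref{lemma:absolute_baseline} through the pointwise inequality relating $a$ and $u$. Recall that $a = u e^{-\chi w}$ with $w > 0$ on $\overline{\Omega} \times [0,\tmax)$ by \Cref{lemma:local_solution}, so that $0 < e^{-\chi w} \leq 1$ and hence $0 < a \leq u$ pointwise on $\overline{\Omega} \times [0,\tmax)$. Integrating this over $\Omega$ and invoking the mass bound $\int_\Omega u(\cdot,t) \leq \mu |\Omega| t + \int_\Omega u_0$ from \Cref{lemma:absolute_baseline}, we obtain
\[
\int_\Omega a(\cdot, t) \leq \int_\Omega u(\cdot, t) \leq \mu |\Omega| t + \int_\Omega u_0
\]
for all $t \in (0,\tmax)$.

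Since we are working under the standing assumption $\tmax < \infty$, the right-hand side is bounded above, uniformly in $t \in (0,\tmax)$, by the constant $C \defs \mu |\Omega| \tmax + \int_\Omega u_0$, which is finite. This yields the claim.

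There is essentially no obstacle here: the only points to keep straight are that the positivity of $w$ (needed for $e^{-\chi w} \leq 1$) is already part of the conclusion of \Cref{lemma:local_solution}, and that the resulting constant $C$ depends on $\tmax$ — which is harmless, since this corollary is only used downstream to derive a contradiction with the blow-up criterion under the hypothesis $\tmax < \infty$.
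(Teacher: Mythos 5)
Your proposal is correct and follows essentially the same route as the paper: both reduce the claim immediately to the mass bound of \Cref{lemma:absolute_baseline} together with the finiteness of $\tmax$. If anything, your pointwise estimate $a = u e^{-\chi w} \leq u$ (using $w \geq 0$) is the cleaner version of the paper's own bound, which writes the factor with the opposite sign and therefore settles for the slightly weaker constant $e^{\chi \|w_0\|_{L^\infty(\Omega)}}$ in front of $\int_\Omega u$.
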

	\begin{proof}
		As $\int_\Omega a = \int_\Omega u e^{\chi w} \leq e^{\chi \|w\|_\L{\infty}} \int_\Omega u$, this is a direct consequence of \Cref{lemma:absolute_baseline} if $\tmax < \infty$.
	\end{proof}\noindent
	In preparation for a later Moser-type iteration argument for the first solution component $a$ (cf.\ \cite{AlikakosBoundsSolutionsReactiondiffusion1979} and \cite{MoserNewProofGiorgi1960} for some early as well as \cite{FuestBlowupProfilesQuasilinear2020} and \cite{TaoBoundednessQuasilinearParabolicparabolic2012} for some more contemporary examples of this technique), which will later be used to rule out its finite-time blow-up, we will now derive a recursive inequality for terms of the form $\int_\Omega a^p$. This recursion will in fact allow us to estimate each term of the form $\int_\Omega a^p$ by terms of the form $(\int_\Omega a^\frac{p}{2})^2$ with constants independent of $p$, which will prove sufficient to later gain an $L^\infty(\Omega)$ bound for $a$. The method employed to gain said recursion is testing the first equation in (\ref{problem_a}) with $e^{\chi w}a^{p-1}$ followed by some estimates based on the Gagliardo--Nirenberg inequality. 
	\\[0.5em]
	To facilitate this derivation of said recursion, we will from now on assume that the regularizing influence of the logistic source term in the first equation of (\ref{problem}) is sufficiently strong, or more precisely we assume that either $r > 2$ or $\mu$ is sufficiently large in comparison to $\chi$ and the $L^\infty(\Omega)$ norm of $w_0$. However at this point and therefore for the whole of the Moser-type iteration argument, we will not use our assumed restriction to two or three dimensions just yet.
	\begin{lemma} \label{lemma:linfty_recursion} 
	If $\tmax < \infty$ and further $r > 2$ or $\logc \geq \chi \|w_0\|_\L{\infty}$, then there exists a constant $C > 0$ such that
	\[
		\sup_{t\in(0,\tmax)}\int_\Omega a^p \leq C\max\left( \; \int_\Omega a_0^p, \; C^{p+1}, \; p^C \left( \sup_{t\in(0,\tmax)}\int_\Omega a^\frac{p}{2} \right)^2 \; \right)	
	\]
	for all $p \geq 2$.
	\end{lemma}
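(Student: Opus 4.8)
The plan is to test the first equation of~(\ref{problem_a}) with $e^{\chi w} a^{p-1}$, integrate over $\Omega$, and organize the outcome into a dissipative part, an absorbable part, and a single genuinely dangerous term that the hypothesis is built to defeat. Throughout one uses that $w$ is positive (so $e^{\chi w}\ge 1$) and bounded above by $\|w_0\|_\L{\infty}$ by \Cref{lemma:absolute_baseline}, that $\D\ge\tfrac1M$ and $\|\div\D\|_\L{\infty}\le M$ by~(\ref{eq:classical_D_assumption}), and that the no-flux identities $(\D\grad a)\cdot\nu=(\div\D)\cdot\nu=0$ make every boundary integral vanish.

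First I would carry out the integrations by parts. Writing $\tfrac{\d}{\d t}(a^p e^{\chi w}) = p a^{p-1} a_t e^{\chi w} + \chi a^p w_t e^{\chi w}$ and inserting $w_t=-a e^{\chi w} w$ turns the time term into $\tfrac1p\tfrac{\d}{\d t}\int_\Omega a^p e^{\chi w} + \tfrac{\chi}{p}\int_\Omega a^{p+1} w e^{2\chi w}$; the principal diffusion term becomes $-(p-1)\int_\Omega a^{p-2} e^{\chi w}\,\grad a\cdot\D\grad a\le -\tfrac{4(p-1)}{Mp^2}\int_\Omega|\grad a^{p/2}|^2$, the dissipation; the $\div\D$-term becomes $-\tfrac{2(p-1)}{p}\int_\Omega e^{\chi w} a^{p/2}\grad a^{p/2}\cdot\div\D$, which Young's inequality bounds by half the dissipation plus $Cp\int_\Omega a^p$; and the logistic term splits into the absorbable $\mu\int_\Omega e^{\chi w} a^p$ and the favourable $-\mu\int_\Omega e^{\chi r w} a^{p+r-1}$, while the last reaction term contributes $\chi\int_\Omega e^{2\chi w} a^{p+1} w$. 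After the $\tfrac{\chi}{p}\int_\Omega a^{p+1} w e^{2\chi w}$ arising from the time derivative partially cancels this last contribution, the balance reads
\begin{align*}
	\frac{1}{p}\frac{\d}{\d t}\int_\Omega a^p e^{\chi w} &+ \frac{2(p-1)}{Mp^2}\int_\Omega |\grad a^{p/2}|^2 + \mu\int_\Omega e^{\chi r w} a^{p+r-1} \\
	&\leq Cp\int_\Omega a^p + \frac{\chi(p-1)}{p}\int_\Omega e^{2\chi w} a^{p+1} w ,
\end{align*}
where $C$ now also absorbs $\mu e^{\chi\|w_0\|_\L{\infty}}$.

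The crux is to control $\tfrac{\chi(p-1)}{p}\int_\Omega e^{2\chi w} a^{p+1} w$ by the logistic dissipation $\mu\int_\Omega e^{\chi r w} a^{p+r-1}$, and this is exactly where the dichotomy enters. If $r=2$, then $p+r-1=p+1$ and $e^{\chi r w}=e^{2\chi w}$, so using $w\le\|w_0\|_\L{\infty}$ and $\tfrac{p-1}{p}<1$ the dangerous term is at most $\chi\|w_0\|_\L{\infty}\int_\Omega e^{2\chi w} a^{p+1}$, which is fully swallowed by the dissipation once $\mu\ge\chi\|w_0\|_\L{\infty}$. If $r>2$, I would instead invoke Young's inequality in the form $a^{p+1}\le\eta\, a^{p+r-1}+C(\eta,p,r)$, choosing $\eta$ \emph{independent of $p$} and small enough that the $\eta$-part is absorbed by $\mu\int_\Omega e^{\chi r w} a^{p+r-1}\ge\mu\int_\Omega a^{p+r-1}$ (using $e^{\chi r w}\ge 1$); since the conjugate exponent is $\tfrac{p+r-1}{r-2}$, the leftover constant $C(\eta,p,r)$ is of order $\eta^{-(p+1)/(r-2)}$, i.e.\ geometric in $p$, so that this contribution to the right-hand side is $\le C^{p+1}$ — precisely the $C^{p+1}$ appearing in the statement (and only genuinely needed when $r>2$). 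In either case we are left with
\[
	\frac{1}{p}\frac{\d}{\d t}\int_\Omega a^p e^{\chi w} + \frac{2(p-1)}{Mp^2}\int_\Omega |\grad a^{p/2}|^2 \leq Cp\int_\Omega a^p + C^{p+1} .
\]

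To conclude, I would insert the Gagliardo--Nirenberg inequality $\|a^{p/2}\|_{\L{2}}^2\le C_{\mathrm{GN}}\bigl(\|\grad a^{p/2}\|_{\L{2}}^{2\theta}\|a^{p/2}\|_{\L{1}}^{2(1-\theta)}+\|a^{p/2}\|_{\L{1}}^2\bigr)$ with $\theta=\tfrac{n}{n+2}<1$ (valid since $n\in\{2,3\}$) and then Young's inequality to obtain $\int_\Omega a^p\le\eta'\int_\Omega|\grad a^{p/2}|^2+C(\eta')\bigl(\int_\Omega a^{p/2}\bigr)^2$; because $2\theta<2$ the exponent $\tfrac{\theta}{1-\theta}$ is finite, so the choice $\eta'\sim p^{-2}$ keeps $C(\eta')\le p^C$ while still leaving the dissipation strong enough to dominate the $Cp\int_\Omega a^p$ on the right. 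After multiplying through by $p$ and using $\int_\Omega a^p\ge e^{-\chi\|w_0\|_\L{\infty}}\int_\Omega a^p e^{\chi w}$, everything reduces to the scalar differential inequality $\tfrac{\d}{\d t} y_p+\kappa p^2 y_p\le p^C\bigl(\sup_{t\in(0,\tmax)}\int_\Omega a^{p/2}\bigr)^2+C^{p+1}$ for $y_p\defs\int_\Omega a^p e^{\chi w}$, whose integration yields $y_p(t)\le y_p(0)+\tfrac{1}{\kappa p^2}\bigl(p^C(\sup\int_\Omega a^{p/2})^2+C^{p+1}\bigr)$ uniformly in $t$. Sandwiching $\int_\Omega a^p\le y_p\le e^{\chi\|w_0\|_\L{\infty}}\int_\Omega a^p$, bounding $y_p(0)\le e^{\chi\|w_0\|_\L{\infty}}\int_\Omega a_0^p$, and estimating the sum of the three surviving terms by three times their maximum gives the asserted recursion after relabelling $C$. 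I expect the only genuine obstacle to be the term $\int_\Omega e^{2\chi w} a^{p+1} w$: it is the sole place the hypothesis $r>2$ or $\mu\ge\chi\|w_0\|_\L{\infty}$ is used, and in the case $r>2$ one must be careful to take the Young parameter $p$-independent so that exactly a geometric-in-$p$ remainder $C^{p+1}$ survives; the remainder of the argument is routine once one checks that every $p$-dependent constant generated is at most polynomial ($p^C$) or geometric ($C^p$) in $p$.
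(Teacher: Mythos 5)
Your proposal is correct and follows essentially the same route as the paper: testing with $e^{\chi w}a^{p-1}$, exploiting the partial cancellation of the $\int_\Omega w e^{2\chi w}a^{p+1}$ terms, defeating the surviving dangerous term either by direct absorption (when $r=2$ and $\mu\ge\chi\|w_0\|_{L^\infty(\Omega)}$) or by a $p$-uniform Young splitting producing a geometric remainder $C^{p+1}$ (when $r>2$), and then Gagliardo--Nirenberg plus Young with weight $\sim p^{-2}$ to generate the $p^C$ factor. The only (harmless) deviation is at the end: you keep an absorbed term $\kappa p^2 y_p$ on the left and integrate the resulting ODE, whereas the paper simply integrates $\tfrac{\d}{\d t}y_p\le R_p$ in time and invokes $\tmax<\infty$; both yield the stated recursion.
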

	\begin{proof}
	We test the first equation in (\ref{problem_a}) with $e^{\chi w} a^{p-1}$ and apply partial integration to see that 
	\begin{align*}
	\frac{1}{p}\frac{\d}{\d t}\int_\Omega e^{\chi w} \a^p &=  \int_\Omega e^{\chi w} \a^{p-1} {\a}_t + \frac{\chi}{p} \int_\Omega {w}_t e^{\chi w} \a^p 
	=  \int_\Omega e^{\chi w} \a^{p-1} {\a}_t - \frac{\chi}{p} \int_\Omega w e^{2 \chi w} \a^{p + 1}\\
	&= \int_\Omega \a^{p-1} \div ( e^{\chi w}\D \grad \a) + \int_\Omega \a^{p-1} \div (a e^{\chi w} (\div \D)) \\ &\hphantom{=\,}+ \logc  \int_\Omega e^{\chi w} \a^{p} - \logc \int_\Omega e^{\loge \chi w} a^{p-1+\loge} + \chi \frac{p-1}{p} \int_\Omega w e^{2 \chi w} \a^{p + 1} \\
	&= - (p-1) \int_\Omega e^{\chi w} \a^{p-2} (\grad \a \cdot \D \grad \a) - (p-1)\int_\Omega  e^{\chi w}\a^{p-1} ((\div \D) \cdot \grad \a) \\&\hphantom{=\,}+ \logc  \int_\Omega e^{\chi w} \a^{p} - \logc \int_\Omega e^{\loge \chi w} a^{p-1+\loge} + \chi \frac{p-1}{p} \int_\Omega w e^{2 \chi w} \a^{p + 1}
	\numberthis \label{eq:ae_test_1}
	\end{align*}
	for all $t \in (0,\tmax)$ and $p \geq 2$. Given our assumptions for $\D$ in (\ref{eq:classical_D_assumption}), we can use Young's inequality to further estimate that
	\begin{align*}
		&- (p-1) \int_\Omega e^{\chi w} \a^{p-2} (\grad \a \cdot \D \grad \a) - (p-1)\int_\Omega  e^{\chi w}\a^{p-1} ((\div \D) \cdot \grad \a) \\
		&\leq -\frac{p-1}{M} \int_\Omega e^{\chi w} a^{p-2} |\grad a|^2 + M(p-1) \int_\Omega e^{\chi w} a^{p-1} |\grad a|
		\\
		&\leq  -\frac{p-1}{2M} \int_\Omega e^{\chi w} a^{p-2} |\grad a|^2 + 2 M^3(p-1) \int_\Omega e^{\chi w} a^p  \\
		&\leq- \frac{p-1}{p^2} \frac{2}{M} \int_\Omega e^{\chi w} |\grad a^\frac{p}{2}|^2 + 2 M^3\, p \int_\Omega e^{\chi w} a^p \\
		&\leq-\frac{1}{p}\frac{1}{M} \int_\Omega e^{\chi w} |\grad a^\frac{p}{2}|^2 + 2 M^3\, p \int_\Omega e^{\chi w} a^p 
	\end{align*}
	as well as more elementary that
	\[
		\chi \frac{p-1}{p} \int_\Omega w e^{2\chi w}a^{p+1} \leq \chi \|w_0\|_\L{\infty} \int_\Omega e^{2\chi w}a^{p+1}
	\]
	for all $t \in (0,\tmax)$ and $p \geq 2$, which when applied to (\ref{eq:ae_test_1}) results in
	\begin{align*}
		&\frac{1}{p}\frac{\d}{\d t}\int_\Omega e^{\chi w} \a^p + \frac{1}{p} \frac{1}{M} \int_\Omega e^{\chi w} |\grad a^\frac{p}{2}|^2 \\		
		\leq& (\logc + 2M^3\, p)  \int_\Omega e^{\chi w} \a^{p} - \logc \int_\Omega e^{\loge \chi w} a^{p-1+\loge} + \chi \|w_0\|_\L{\infty} \int_\Omega e^{2 \chi w} \a^{p + 1} \numberthis \label{eq:ae_test_2}
	\end{align*}
	for all $t \in (0,\tmax)$ and $p \geq 2$. If $r > 2$, we can now further estimate that
	\begin{align*}
	&\hphantom{\leq\;\,} - \logc \int_\Omega e^{\loge \chi w} a^{p-1+\loge} + \chi \|w_0\|_\L{\infty} \int_\Omega e^{2 \chi w} \a^{p + 1} \\
	&\leq - \logc \int_\Omega e^{\loge \chi w} a^{p-1+\loge} + \chi \|w_0\|_\L{\infty} \int_\Omega e^{\loge \chi w} \a^{p + 1}\\
	&\leq \chi \|w_0\|_\L{\infty}  \left(  \frac{\chi \|w_0\|_\L{\infty} }{\mu}\right)^\frac{p + 1}{r-2} e^{\loge \chi \|w_0\|_\L{\infty}} |\Omega| \leq K_1^{p+1}
	\end{align*}
	with \[
		K_1 \defs \left(\chi \|w_0\|_\L{\infty} e^{r\chi \|w_0\|_\L{\infty}} |\Omega| + 1 \right)\left(  \frac{\chi \|w_0\|_\L{\infty} }{\mu}\right)^\frac{1}{r-2}
	\]
	for all $t \in (0,\tmax)$ and $p \geq 2$ by Young's inequality. If, however, $r = 2$ and $\logc \geq \chi \|w_0\|_\L{\infty}$, it is immediately obvious that
	\[
		\hphantom{\leq\;\,} - \logc \int_\Omega e^{\loge \chi w} a^{p-1+\loge} + \chi \|w_0\|_\L{\infty} \int_\Omega e^{2 \chi w} \a^{p + 1} \leq 0 \leq K_1^{p+1}
	\]
	with $K_1 \defs 1$ for all $t \in (0,\tmax)$ and $p \geq 2$. As such, we can in both cases conclude from (\ref{eq:ae_test_2}) that
	\begin{equation}\label{eq:ae_test_3}
		\frac{1}{p}\frac{\d}{\d t}\int_\Omega e^{\chi w} \a^p + \frac{1}{p} \frac{1}{M} \int_\Omega e^{\chi w} |\grad a^\frac{p}{2}|^2 \leq (\logc  + 2 M^3 \,p)  \int_\Omega e^{\chi w} \a^{p} + K_1^{p+1} \leq p\, K_2 \int_\Omega  \a^{p} + K_1^{p+1}
	\end{equation}
	with $K_2 \defs (\mu + 2 M^3)e^{\chi \|w_0\|_\L{\infty}}$ for all $t \in (0,\tmax)$ and $p \geq 2$.
	\\[0.5em]
	We can now use the Gagliardo--Nirenberg inequality to fix a constant $K_3 > 0$ such that
	\begin{align*}
		\int_\Omega a^p &= \| a^\frac{p}{2} \|^2_\L{2} \leq K_3 \|\grad a^\frac{p}{2}\|_\L{2}^{2\alpha} \| a^\frac{p}{2}\|^{2(1-\alpha)}_\L{1} + K_3\| a^\frac{p}{2} \|^2_\L{1} \\
		&\leq \frac{1}{p^2} \frac{1}{M} \frac{1}{K_2}  \int_\Omega |\grad a^\frac{p}{2}|^2 + ( (p^2 M K_2)^\frac{\alpha}{1-\alpha}K_3^\frac{1}{1-\alpha} + K_3) \left( \int_\Omega a^\frac{p}{2} \right)^2 \\
		&\leq \frac{1}{p^2} \frac{1}{M} \frac{1}{K_2}  \int_\Omega e^{\chi w}|\grad a^\frac{p}{2}|^2 + K_4 p^{K_4} \left( \int_\Omega a^\frac{p}{2} \right)^2
	\end{align*}
	for all $t \in (0,\tmax)$ and $p \geq 2$
	with 
	\[
		\alpha \defs \frac{1}{1+\frac{2}{n}} \in (0,1)
	\]
	and $K_4 \defs \max(\frac{2\alpha}{1-\alpha},  (M K_2)^\frac{\alpha}{1-\alpha}K_3^\frac{1}{1-\alpha} + K_3)$. Applying this to (\ref{eq:ae_test_3}) then implies
	\[
		\frac{\d}{\d t}\int_\Omega e^{\chi w} \a^p \leq K_2 K_4 p^{K_4 + 2} \left(\int_\Omega a^\frac{p}{2}\right)^2 + pK_1^{p+1} \leq K_2 K_4 p^{K_4 + 2} \left(\int_\Omega a^\frac{p}{2}\right)^2 + (2K_1)^{p+1} 
	\]
	for all $t \in (0,\tmax)$ and $p \geq 2$.
	Time integration then yields
	\[
		\int_\Omega \a^p(\cdot, t) \leq \int_\Omega e^{\chi w} \a^p(\cdot,t) \leq \tmax K_2 K_4 p^{K_4 + 2} \left( \sup_{s\in(0,\tmax)}\int_\Omega a^\frac{p}{2}(\cdot, s)\right)^2 + \tmax(2K_1)^{p+1} + e^{\chi \|w_0\|_\L{\infty}} \int_\Omega a_0^p
	\]
	for all $t\in(0,\tmax)$ and $p \geq 2$ as $\tmax < \infty$,
	which after estimating the sum on the right-hand side by thrice the maximum of its summands completes the proof.
	\end{proof}\noindent
	We will now proceed to give the actual iteration argument yielding an $L^\infty(\Omega)$-type bound for $a$ and therefore $u$, which is sufficient to rule out finite-time blow-up for the first solution component $u$.
	\begin{lemma}\label{lemma:classical_recursion}
	If $\tmax < \infty$ and further $r > 2$ or $\logc \geq \chi \|w_0\|_\L{\infty}$, then there exists a constant $C > 0$ such that
	\[
	  \|a(\cdot, t)\|_\L{\infty} \leq C \stext{ and therefore } \|u(\cdot, t)\|_\L{\infty} \leq C
	\]
	for all $t\in(0,\tmax)$.
	\end{lemma}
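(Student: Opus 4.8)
The plan is to run a standard Moser-type iteration using the recursive inequality already obtained in \Cref{lemma:linfty_recursion}. Write $p_k \defs 2^k$ for $k \geq 1$ and set $M_k \defs \max\left(1, \sup_{t\in(0,\tmax)} \int_\Omega a^{p_k}\right)$. The recursion from \Cref{lemma:linfty_recursion} then reads, after absorbing the term $\int_\Omega a_0^{p_k}$ into a constant (note $\|a_0\|_{\L{\infty}} < \infty$ since $a_0 \in C^0(\overline{\Omega})$, so $\int_\Omega a_0^{p_k} \leq |\Omega|\,\|a_0\|_{\L{\infty}}^{p_k} \leq \tilde{C}^{p_k+1}$) and using $p_k \geq 2$,
\[
	M_k \leq C\max\left( \tilde{C}^{p_k+1},\; C^{p_k+1},\; p_k^C\, M_{k-1}^2 \right) \leq \hat{C}\, p_k^C\, \max\left(\hat{C}^{p_k}, M_{k-1}^2\right)
\]
for a suitably enlarged constant $\hat{C} \geq 1$ independent of $k$. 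First I would dispatch the easy case: if along some subsequence the first alternative $M_k \leq \hat{C}\,p_k^C\,\hat{C}^{p_k} $ dominates infinitely often, then $\left(\int_\Omega a^{p_k}\right)^{1/p_k}$ stays bounded along that subsequence, and since $\|a(\cdot,t)\|_{\L{\infty}} = \lim_{k\to\infty} \|a(\cdot,t)\|_{\L{p_k}}$ pointwise (with the limit also controllable in $\sup_t$ by Fatou-type reasoning over the bounded time interval, using continuity of $a$), we are done. So the substantive case is when $M_k \leq \hat{C}\,p_k^C\,M_{k-1}^2$ for all large $k$.

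**Unwinding the recursion.** In that case I would iterate $M_k \leq \hat{C}\, p_k^C\, M_{k-1}^2$ down to $k=1$. Taking logarithms, $\ln M_k \leq C\ln p_k + \ln \hat{C} + 2\ln M_{k-1} = Ck\ln 2 + \ln\hat{C} + 2\ln M_{k-1}$, and unrolling the linear recursion gives
\[
	\ln M_k \leq 2^{k-1}\ln M_1 + \sum_{j=2}^{k} 2^{k-j}\left(Cj\ln 2 + \ln\hat{C}\right).
\]
The key point is that $\sum_{j=2}^{k} 2^{k-j}\, j$ converges after division by $2^k$: indeed $2^{-k}\sum_{j\geq 2} 2^{k-j} j = \sum_{j \geq 2} 2^{-j} j < \infty$. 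Hence $2^{-k}\ln M_k$ is bounded uniformly in $k$, i.e.\ $M_k^{1/p_k} = M_k^{2^{-k}} \leq K$ for some finite $K$ independent of $k$. This yields $\sup_{t\in(0,\tmax)} \|a(\cdot,t)\|_{\L{p_k}} \leq M_k^{1/p_k} \leq K$, and letting $k\to\infty$ gives $\sup_{t\in(0,\tmax)}\|a(\cdot,t)\|_{\L{\infty}} \leq K$. (One must also ensure $M_1 = \sup_t \int_\Omega a^2 < \infty$; this follows from \Cref{lemma:linfty_recursion} with $p=2$, whose right-hand side involves $\sup_t \int_\Omega a$, which is finite by \Cref{corollary:absolute_baseline_a}.)

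**Transferring back to $u$.** Since $a = u e^{-\chi w}$ and $w \geq 0$, we have $u = a e^{\chi w} \leq a e^{\chi\|w_0\|_{\L{\infty}}}$ pointwise by \Cref{lemma:absolute_baseline}, so the bound on $a$ immediately gives $\|u(\cdot,t)\|_{\L{\infty}} \leq K e^{\chi\|w_0\|_{\L{\infty}}} \sfed C$ for all $t \in (0,\tmax)$, as claimed.

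**Main obstacle.** The genuinely delicate point is the case distinction in the first paragraph and the justification that the $L^{p_k}$ norms converge to the $L^\infty$ norm uniformly in $t$ over the half-open interval $(0,\tmax)$. Strictly speaking the cleanest route is to avoid the case split entirely: one shows that for each $k$, $M_k^{1/p_k} \leq C_\star$ for a single constant $C_\star$ by a direct induction on $k$ using the fully combined bound $M_k \leq \hat C\, p_k^C \max(\hat C^{p_k}, M_{k-1}^2)$ — one checks the induction step handles both branches of the max simultaneously, since $(\hat C p_k^C \hat C^{p_k})^{1/p_k}$ and $(\hat C p_k^C M_{k-1}^{2})^{1/p_k} = (\hat C p_k^C)^{1/p_k} M_{k-1}^{1/p_{k-1}}$ are both bounded provided the prefactor product $\prod_k (\hat C p_k^C)^{1/p_k}$ converges, which it does. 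This is the step I expect to require the most care in writing out, but it is entirely routine Moser bookkeeping with no new ideas; everything structural has already been packaged into \Cref{lemma:linfty_recursion}.
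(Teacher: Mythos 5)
Your proposal is correct and follows essentially the same route as the paper's proof: the same Moser iteration on $p_k = 2^k$ driven by \Cref{lemma:linfty_recursion}, the same case split between the branch where the non-recursive terms dominate infinitely often and the branch where the pure recursion $M_k \leq \hat C p_k^C M_{k-1}^2$ holds for large $k$, and the same convergence of the geometric prefactor sum (the paper phrases this as $J_i \leq K_3^{1/\sqrt{p_i}} J_{i-1}$ with $\sum_j p_j^{-1/2} < \infty$ rather than via logarithms, which is equivalent bookkeeping). The only nitpick is that in the substantive case the unrolling should start at the index $k_0$ from which the recursive branch actually holds rather than at $k=1$, exactly as the paper does with its $i_0$ — a trivial adjustment that your closing induction remark already subsumes.
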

	\begin{proof}
		\newcommand{\iter}{J}
		Let $p_i \defs 2^{i}$, $i \in \N_0$, and $\iter_i \defs \sup_{t\in(0,\tmax)} \left(\int_\Omega a^{p_i}(\cdot,t) \right)^\frac{1}{p_i}$. Then $\iter_0$ is finite because of \Cref{corollary:absolute_baseline_a} and the fact that $p_0 = 1$. We further know that
		\[
		\|a_0\|_\L{p_i} \leq (1+|\Omega|)\|a_0\|_\L{\infty} \sfed K_1.
		\]
		Due to \Cref{lemma:linfty_recursion}, we can conclude that there exists a constant $K_2 \geq 1$ such that the numbers $\iter_i$ conform to the following recursion:
		\[
		\iter_i \leq K_2^\frac{1}{p_i} \max\left(\; \|a_0\|_\L{p_i},\; K_2^{\frac{p_i + 1}{p_i}}, \; p_i^\frac{K_2}{p_i} \iter_{i-1} \right) \;\;\;\; \text{ for all } i \in \N.
		\]
		Iterating this recursion finitely many times ensures that all $\iter_i$ are finite.
		\\[0.5em]
		If there exists an incrementing sequence of indices $i\in\N$, along of which $\iter_i \leq \max(K_1 K_2, K_2^3)$, we immediately gain our desired result by taking the limit of $\iter_i$ along said sequence. As such, we can now assume that there exists $i_0 \in \N$ with
		\[
		\iter_i \geq \max(K_1 K_2, K_2^3)> \left\{
		\begin{aligned}
			&K_2^\frac{1}{p_i}  \|a_0\|_\L{p_i} \\
			&K_2^\frac{1}{p_i} K_2^\frac{p_i+1}{p_i}
		\end{aligned}
		\right.  \;\;\;\; \text{ for all } i \geq i_0 
		\]
		to cover the remaining case. Given these assumptions, the above recursion simplifies to
		\[
		\iter_i \leq (p_i K_2)^\frac{K_2}{p_i}\iter_{i-1}  \leq K_3^{\frac{1}{\sqrt{p_i}}} \iter_{i-1}
		\]
		for all $i \geq i_0$ with some $K_3 > 0$ (only depending on $K_2$) as the function $z \mapsto (zK_2)^\frac{ K_2}{\sqrt{z}}$ is bounded on $[1,\infty)$. By now again iterating this recursion finitely many times, we gain that
		\begin{equation}\label{eq:recusion_consequence}
		\iter_i \leq K_3^{\sum_{j = i_0}^{i}\frac{1}{\sqrt{p_j}}} \iter_{i_0 - 1}
		\end{equation}
		for all $i \geq i_0$.
		As
		\[
		\sum_{j=i_0}^i  \frac{1}{\sqrt{p_j}} = \sum_{j=i_0}^i \left(\frac{1}{\sqrt{2}} \right)^{j} \leq  \sum_{j=0}^\infty \left(\frac{1}{\sqrt{2}} \right)^j < \infty
		\]
		for all $i \geq i_0$ due to the series on the right side being of geometric type, we can conclude from (\ref{eq:recusion_consequence}) that the sequence $J_i$ is uniformly bounded. Therefore, taking the limit $i \rightarrow \infty$ gives us our desired bound for $a$. As $u = a e^{\chi w}$, the corresponding bound for $u$ follows directly from this and \Cref{lemma:absolute_baseline}.
	\end{proof}
	\noindent
	To now establish that finite-time blow-up of the second solution component $w$ is equally as impossible, we will begin by testing the first equation in (\ref{problem_a}) with $-\div (\D \grad a)$ and combining the result with the differential equation associated with $\frac{\d}{\d t}\int_\Omega |\grad w|^4$. The key to extracting a sufficiently strong bound for $w$ is to then use the strength of the absorptive terms originating from the fully elliptic operator $-\div (\D \grad \cdot )$ to counteract the influence of potentially destabilizing terms due to the haptotaxis interaction. Note that the ellipticity of the operator is ensured because we assume that $\D$ is positive definite everywhere in $\overline{\Omega}$.
	\begin{lemma}\label{lemma:grad_w_bound}
		If $\tmax < \infty$ and further $r > 2$ or $\logc \geq \chi \|w_0\|_\L{\infty}$, then there exists a constant $C > 0$ such that
		\[
		\| \grad w(\cdot, t) \|_\L{4} \leq C
		\]	
		for all $t \in (0,\tmax)$.
	\end{lemma}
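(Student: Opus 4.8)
The plan is to couple a gradient-energy estimate for $w$ with a second-order energy estimate for $a$, exploiting the full ellipticity of $-\div(\D\grad\cdot)$ to absorb the taxis-induced bad terms. First I would compute the time derivative of $\frac{1}{4}\frac{\d}{\d t}\int_\Omega |\grad w|^4$. Differentiating the second equation in (\ref{problem_a}) gives $\grad w_t = -\grad(a e^{\chi w} w)$, so after multiplying by $|\grad w|^2 \grad w$ and integrating one obtains terms of the shape $\int_\Omega a e^{\chi w} w\, \div(|\grad w|^2\grad w)$ and $\int_\Omega a e^{\chi w}|\grad w|^4$ (plus lower-order pieces carrying extra factors of $w$ and $\grad w$). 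Using the already-established bounds $\|a\|_{L^\infty(\Omega)}\le C$ (from \Cref{lemma:classical_recursion}) and $\|w\|_{L^\infty(\Omega)}\le\|w_0\|_{L^\infty(\Omega)}$ (from \Cref{lemma:absolute_baseline}), every coefficient in sight is bounded, so this whole contribution is controlled by $C\int_\Omega |\grad w|^2|D^2 w| + C\int_\Omega|\grad w|^4 + C$. The first of these is the dangerous one: it contains second derivatives of $w$, which the $w$-equation alone cannot supply.

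This is where the $a$-equation enters. I would test the first equation of (\ref{problem_a}) with $-\div(\D\grad a)$. The principal term $-\int_\Omega e^{-\chi w}\div(e^{\chi w}\D\grad a)\,\div(\D\grad a)$ produces, after extracting the $e^{-\chi w}$ and $e^{\chi w}$ factors and controlling the commutator terms, a coercive contribution $\gtrsim \int_\Omega |\div(\D\grad a)|^2$ minus manageable remainders; since $\D$ is positive definite and $C^2$ on $\overline\Omega$ and $(\div\D)\cdot\nu=0$, elliptic regularity upgrades this to control of $\int_\Omega |D^2 a|^2$ up to lower-order terms in $\grad a$. The remaining terms on the right — those coming from $\div(a e^{\chi w}\div\D)$, the logistic source $\mu a(1-a^{r-1}e^{\chi(r-1)w})$, and $\chi a^2 w e^{\chi w}$ — are all, thanks to the $L^\infty$ bounds on $a$ and $w$, estimated by $\varepsilon\int_\Omega|\div(\D\grad a)|^2 + C\int_\Omega|\grad a|^2 + C$ via Young's and integration by parts. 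The point is that the cross term pairing $\grad a_t$-type quantities against $\grad w$ is exactly cancelled in structure by the fact that we differentiate $w_t = -ae^{\chi w}w$, so the mixed $a$–$w$ coupling closes: one uses $\int_\Omega |\grad w|^2|D^2 w|\le \varepsilon\int_\Omega|D^2 w|^2 + C_\varepsilon\int_\Omega|\grad w|^4$, and then a Gagliardo–Nirenberg interpolation $\int_\Omega|D^2w|^2 \lesssim \|\grad w\|_{L^4}^{?}\cdots$ together with the $a$-estimate to feed the $\int_\Omega|D^2 w|^2$ back. In dimensions $n\in\{2,3\}$ the relevant Gagliardo–Nirenberg exponents work out so that $\int_\Omega |\grad w|^4$ and the second-derivative terms can be made subcritical.

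Adding the two differential inequalities with a suitable small weight, I expect to arrive at
\[
\frac{\d}{\d t}\left( \int_\Omega |\grad w|^4 + \delta\int_\Omega |\grad a|^2 \right) + c\left( \int_\Omega|D^2 a|^2 + \int_\Omega|D^2 w|^2 \right) \le C\left( \int_\Omega|\grad w|^4 + \int_\Omega|\grad a|^2 \right) + C,
\]
for appropriate constants $c,\delta,C>0$, at which point a Grönwall argument on $(0,\tmax)$ with $\tmax<\infty$ yields a uniform bound on $\int_\Omega|\grad w|^4$, i.e. the claim. The main obstacle is the second step: carefully organizing the testing of the $a$-equation against $-\div(\D\grad a)$ so that the genuinely second-order coercive quantity $\int_\Omega|\div(\D\grad a)|^2$ emerges with a good sign while all commutator terms generated by the non-constant, merely $C^2$ matrix $\D$ and the weights $e^{\pm\chi w}$ (which bring in $\grad w$) are absorbed — and then matching the resulting $D^2$-control precisely against the $\int_\Omega|\grad w|^2|D^2 w|$ term from the $w$-estimate so that the coupled system actually closes in $n=2,3$. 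The use of full ellipticity of $\D$ here is essential and is exactly why this part of the paper requires the strong positivity assumption on $\D$.
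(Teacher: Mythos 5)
Your overall architecture --- coupling $\frac{\d}{\d t}\int_\Omega|\grad w|^4$ with a testing of the first equation in (\ref{problem_a}) against $-\div(\D\grad a)$, and using the full ellipticity of $\D$ to absorb the coupling terms --- is exactly the paper's. But there is a genuine gap in your treatment of the $w$-part. You integrate by parts in $-\int_\Omega|\grad w|^2\grad w\cdot\grad(ae^{\chi w}w)$ so as to produce a term of the shape $\int_\Omega ae^{\chi w}w\,\div(|\grad w|^2\grad w)$, and you then propose to control the resulting $\int_\Omega|\grad w|^2|D^2w|$ by $\eps\int_\Omega|D^2w|^2+C_\eps\int_\Omega|\grad w|^4$. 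There is, however, no source of control for $\int_\Omega|D^2w|^2$ anywhere in the system: the second equation is a pointwise ODE, $w(x,t)=w_0(x)\exp(-\int_0^t u(x,s)\d s)$, so $w$ enjoys no parabolic smoothing, and second spatial derivatives of $w$ would require second derivatives of $u$ integrated in time, which are not available at this stage. Your proposed step of ``feeding $\int_\Omega|D^2w|^2$ back'' via Gagliardo--Nirenberg and the $a$-estimate therefore cannot close.

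The difficulty is self-inflicted: do not integrate by parts. Expanding $\grad(ae^{\chi w}w)$ by the product rule gives
\[
-\int_\Omega|\grad w|^2\grad w\cdot\grad(ae^{\chi w}w)=-\int_\Omega|\grad w|^4\,ae^{\chi w}(\chi w+1)-\int_\Omega|\grad w|^2(\grad w\cdot\grad a)\,e^{\chi w}w,
\]
where the first term is nonpositive (since $a,w\geq 0$) and the second is bounded by $C\int_\Omega|\grad w|^4+C\int_\Omega|\grad a|^4$; no second derivatives of $w$ ever appear. The term $\int_\Omega|\grad a|^4$ is then absorbed into the dissipation $\int_\Omega|\div(\D\grad a)|^2$ produced by testing the $a$-equation, via the elliptic-regularity/Gagliardo--Nirenberg estimate $\int_\Omega|\grad a|^4\le C\left(\int_\Omega|\div(\D\grad a)|^2+1\right)$, which uses the already established $L^\infty(\Omega)$ bound for $a$. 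With that correction your coupled differential inequality closes exactly as you intend, and a comparison argument on the finite interval $(0,\tmax)$ finishes the proof; this is precisely the paper's argument.
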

	\begin{proof}
		Given \Cref{lemma:classical_recursion}, we can fix a constant $K_1 \geq 1$ such that
		\begin{equation}
		\|a(\cdot, t)\|_\L{\infty} \leq K_1 \stext{ and } \int_\Omega \left( a^2(\cdot, t) + a^{2\loge}(\cdot, t) + a^4(\cdot, t) \right) \leq K_1 \;\;\;\; \text{ for all } t \in (0,\tmax). \label{eq:combined_ae_lp_estimates}
		\end{equation}
		Using the Gagliardo--Nirenberg inequality and standard regularity estimates (cf.\ \cite[Theorem 19.1]{FriedmanPartialDifferentialEquations1969} or \cite[Theorem 3.1.1]{LunardiAnalyticSemigroupsOptimal1995})  for the elliptic operator operator $-\div(\D \grad \,\cdot \,)$ (with Neumann-type boundary conditions), we can fix a constant $K_2 \geq 1$ such that
		\[
		\int_\Omega |\grad \phi|^4 \leq K_2  \left( \int_\Omega |\div (\D \grad \phi)|^2  + \int_\Omega |\phi|^2 \right) \|\phi\|^2_\L{\infty} \;\;\;\; \text{ for all } \phi \in C^2(\overline{\Omega}) \text{ with }(\D\grad \phi) \cdot \nu = 0 \text{ on } \partial \Omega.
		\]
		This in turn implies that 
		\begin{equation}
		\int_\Omega |\grad a|^4 \leq K_3 \left( \int_\Omega |\div (\D \grad a)|^2 + 1\right) \label{eq:ae_adapted_gni}
		\end{equation}
		for all $t \in (0,\tmax)$ with $K_3 \defs K_1^3 K_2$ .
		\\[0.5em]
		After establishing these preliminaries, we now note that the first equation in (\ref{problem_a}) can also be written as
		\[
			a_t = \div (\D \grad a) + \chi\grad w \cdot \D \grad a + \div (a (\div \D)) + \chi a (\grad w \cdot (\div \D)) + \mu a (1-a^{r-1}e^{\chi(r-1)w}) +\chi a^2 w e^{\chi w}.
		\]
		We then test this variant of said equation with $-\div (\D \grad \a)$ and employ partial integration (using the fact that $(\div \D) \cdot \nu = 0$ on $\partial \Omega$) as well as Young's inequality to conclude that
		\begin{align*}
		\frac{1}{2}\frac{\d}{\d t} \int_\Omega (\grad a \cdot \D \grad a) &= \int_\Omega (\grad {a}_t \cdot \D \grad a) \\
		&= \int_\Omega \grad ( \div (\D \grad a)) \cdot \D \grad a + \chi \int_\Omega \grad ( \grad w \cdot \D \grad a) \cdot \D \grad a\\
		&\hphantom{=\;}+ \int_\Omega \grad (\div(a \div \D)) \cdot \D \grad a + \chi \int_\Omega \grad (a \grad w \cdot (\div \D)) \cdot \D \grad a \\
		&\hphantom{=\;}+ \int_\Omega \grad \left( \logc  a(1-a^{\loge - 1}e^{\chi(r-1)w} ) + \chi a^2 w e^{\chi w}  \right) \cdot \D \grad a \\
		&\leq -\frac{1}{2}\int_\Omega |\div (\D \grad a) |^2 + 2\chi^2 \int_\Omega |\grad w \cdot \D \grad w| |\grad a \cdot \D \grad a| \\
		&\hphantom{=\;}+ 2\int_\Omega |\div (a \div \D)|^2 + 2\chi^2 \int_\Omega a^2 |\grad w|^2 |\div \D|^2 \\
		&\hphantom{=\;}+ K_4 \int_\Omega \left( a^2 + a^{2\loge} + a^4 \right) \numberthis \label{eq:div_ae_test}
		\end{align*}
		for all $t \in (0,\tmax)$ with $K_4 \defs 8 \max\left( \logc, \logc e^{\chi (\loge - 1)\|w_{ 0}\|_\L{\infty}}, \chi \|w_{0}\|_\L{\infty}e^{\chi \|w_{0}\|_\L{\infty}}\right)^2$. Using the bounds outlined in (\ref{eq:classical_D_assumption}) and (\ref{eq:combined_ae_lp_estimates}), we can now further derive that
		\begin{align*}
		2\chi^2\int_\Omega |\grad w \cdot \D \grad w| |\grad a \cdot \D \grad a| \leq 2\chi^2 M^2\int_\Omega |\grad w|^2 |\grad a|^2 \leq 8 \chi^4 M^4 K_3 \int_\Omega |\grad w|^4 + \frac{1}{8K_3}\int_\Omega |\grad a|^4 
		\end{align*}
		and
		\begin{align*}
		2\int_\Omega |\div (a \div \D)|^2 &\leq 4\int_\Omega |\grad a|^2 |\div \D|^2 + 4 \int_\Omega a^2|\div (\div \D)|^2 \\ &\leq 4 M^2 \left(\int_\Omega |\grad a|^2  + \int_\Omega a^2 \right) \\
		&\leq 4 M^2 \left(\int_\Omega |\grad a|^2  + K_1 \right) \\ 
		&\leq \frac{1}{8K_3}\int_\Omega |\grad a|^4 + 32 M^4K_3 + 4 M^2 K_1
		\end{align*}
		and
		\begin{align*}
		2 \chi^2 \int_\Omega a^2 |\grad w|^2 |\div \D|^2 \leq \chi^2 M^2 \left(\int_\Omega a^4 + \int_\Omega |\grad w|^4 \right) \leq  \chi^2 M^2 K_1 \left(\int_\Omega |\grad w|^4 + 1 \right)
		\end{align*}
		for all $t \in (0,\tmax)$. Applying these three estimates combined with the second bound in (\ref{eq:combined_ae_lp_estimates}) to (\ref{eq:div_ae_test}) then yields
		\begin{equation} \label{eq:div_ae_test_2}
		\frac{1}{2}\frac{\d}{\d t} \int_\Omega (\grad a \cdot \D \grad a) \leq -\frac{1}{2}\int_\Omega |\div (\D \grad a) |^2 + \frac{1}{4K_3} \int_\Omega |\grad a|^4 + K_5 \int_\Omega |\grad w|^4 + K_6 
		\end{equation}
		for all $t \in (0,\tmax)$ with $K_5 \defs 8 \chi^4 M^4 K_3 + \chi^2 M^2 K_1$ and $K_6 \defs 32 M^4K_3 + 4 M^2 K_1 + \chi^2 M^2 K_1 + K_1K_4$.
		\\[0.5em]
		As our second step, we now obtain the following estimate for the time derivative of certain gradient terms of the second solution component $w$ as follows:
		\begin{align*}
		\frac{1}{4} \frac{\d}{\d t}\int_\Omega |\grad w|^4 &= \int_\Omega |\grad w|^2 \grad w \cdot \grad {w}_t = -\int_\Omega |\grad w|^2 \grad w \cdot \grad (a e^{\chi w}w)  \\
		&=-\int_\Omega |\grad w|^4 a e^{\chi w}(\chi w + 1) - \int_\Omega |\grad w|^2 (\grad w \cdot \grad a)  e^{\chi w}w \\
		&\leq K_7 \int_\Omega |\grad w|^3 |\grad a| \leq K_7 \int_\Omega |\grad w|^4 +   K_7 \int_\Omega |\grad a|^4
		\end{align*}
		for all $t \in (0,\tmax)$ with $K_7 \defs \|w_{0}\|_\L{\infty}e^{\chi \|w_{0}\|_\L{\infty}}$.
		\\[0.5em]
		Now combining this with (\ref{eq:div_ae_test_2}) (using an appropriate scaling factor) we gain
		\begin{align*}
		&\frac{1}{2}\frac{\d}{\d t} \int_\Omega (\grad a \cdot \D \grad a) + \frac{1}{16 K_3 K_7 } \frac{\d}{\d t}\int_\Omega |\grad w|^4 \leq -\frac{1}{2}\int_\Omega |\div (\D \grad a) |^2 + \frac{1}{2K_3} \int_\Omega |\grad a|^4 + K_8 \int_\Omega |\grad w|^4 + K_6
		\end{align*}
		for all $t \in (0,\tmax)$ with $K_8 \defs K_5 + \frac{1}{4K_3}$. The application of (\ref{eq:ae_adapted_gni}) to the inequality above then yields
		\begin{align*}
		\frac{1}{2}\frac{\d}{\d t} \int_\Omega (\grad a \cdot \D \grad a) + \frac{1}{16 K_3 K_7} \frac{\d}{\d t}\int_\Omega |\grad w|^4 &\leq K_8 \int_\Omega |\grad w|^4 + K_6 + \frac{1}{2} \\
		&\leq K_9 \left( \frac{1}{2}\int_\Omega (\grad a \cdot \D \grad a) + \frac{1}{16 K_3 K_7}  \int_\Omega |\grad w|^4 \right) + K_6 + \frac{1}{2}
		\end{align*}
		with $K_9 \defs 16 K_3 K_7 K_8$ for all $t\in(0,\tmax)$, which, by a standard comparison argument and the assumption that $\tmax$ is finite, directly gives us our desired result.
	\end{proof}
	\begin{remark}
	The result of the above lemma only ensures that finite-time blow-up of the second solution component is impossible in two and three dimensions according to our blow-up criterion (\ref{eq:blowup}). As such, it is at this point and only this point in this section, where our restriction to two or three dimensions becomes necessary. This, of course, in turn means that any extension of the results of this section to a higher dimensional setting would only need to extend the above argument to one providing better bounds for the gradient of $w$.
	\end{remark}
	\noindent Given that \Cref{lemma:classical_recursion} and \Cref{lemma:grad_w_bound} rule out any kind of finite-time blow-up for our local solutions, the proof of the first central result of this paper can now be stated quite succinctly.
	\begin{proof}[Proof of \Cref{theorem:classical_solution}]
		If we assume $\tmax < \infty$, \Cref{lemma:classical_recursion} and \Cref{lemma:grad_w_bound} in combination contradict the consequence of the blow-up criterion (\ref{eq:blowup}) in this case. Therefore, $\tmax = \infty$ and thus the local solutions constructed in \Cref{lemma:local_solution} must be in fact global. This is sufficient to prove \Cref{theorem:classical_solution} as the fixed assumptions of this section were in fact identical to those of said theorem.
	\end{proof}
	\begin{remark}
		It is also possible to construct classical solutions in the two dimensional case without relying on logistic influences by using some methods that have previously been used when for example dealing with standard diffusion and some slightly modified versions of our arguments (cf.\ \cite{BellomoMathematicalTheoryKeller2015}). 
		\\[0.5em]
		Essentially, the argument boils down to using an estimate of the form
		\[
			\|u\|^3_\L{3} \leq \eps \|u\|^2_{W^{1,2}(\Omega)} \|u\ln(u) \|_\L{1} + C(\eps) \|u\|_\L{1}
		\] 
		with $\eps$ being potentially arbitrarily small (cf.\ \cite[p.1199]{BilerDebyeSystemExistence1994}) in combination with an additional baseline $\int_\Omega u\ln(u)$ estimate based on an energy-type inequality (cf.\ \Cref{lemma:energy_baseline}) to establish an $L^2(\Omega)$ estimate. From there, the arguments are very similar to the Moser-type iteration argument presented above, only with some slight complications added, which are easily surmountable. \Cref{lemma:grad_w_bound} translates basically verbatim.
		\\[0.5em]
		We decided not to present this result here as it will not be needed for our later construction of weak solutions and is not appreciably different from what we have done here or has already been done in the classical diffusion case. 
	\end{remark}
	
	\section{Existence of Weak Solutions}
	
	We have at this point established all the classical existence theory we want to address in this paper and therefore will now transition to our construction of weak solutions, which is in part based on said classical theory.
	
	\subsection{Approximate Solutions}
	
	As is fairly common, our construction of weak solutions will centrally rely on approximation of said solutions by classical solutions, which solve a suitably regularized version of the original problem. As we already derived global existence of classical solutions for the system (\ref{problem}) with very strong assumptions on $\D$, we of course want to construct our weak solutions under much weaker assumptions on $\D$ because there would be almost nothing gained otherwise. As such, the central regularization employed by us will be concerned with approximating a potentially quite irregular $\D$ by matrices $\De$ that are sufficiently regular to ensure classical existence of solutions. Apart from this, we will use approximated initial data. We will also slightly modify the logistic source term to ensure $r > 2$ in our approximated system because we can then further eliminate the assumption concerning the parameters $\chi$ and $\mu$ needed for the classical theory when $r = 2$.
	One central advantage of this approach is that our approximate systems are very close to the system we actually want to construct solutions for and thus our regularizations only minimally interfere with the structures present in the system, which we want to exploit for e.g.\ a priori information.
	\\[0.5em]
	To now make all of this more explicit, we begin by fixing a smooth bounded domain $\Omega\subseteq\R^n$, $n\in\{2,3\}$, and system parameters $\chi \in (0,\infty)$, $\logc \in (0,\infty)$, $\loge \in [2,\infty)$. We also fix some a.e.\ non-negative initial data $u_0 \in L^{z[\ln(z)]_+}(\Omega)$ and $w_0 \in C^0(\overline{\Omega})$ with $\sqrt{w_0} \in W^{1,2}(\Omega)$, where $L^{z[\ln(z)]_+}(\Omega)$ is the standard Orlicz spaces associated with the function $z \mapsto z \left[\ln(z)\right]_+$. We further fix $\D \in W^{1,2}_\mathrm{div}(\Omega;\R^{n\times n}) \cap C^0(\overline{\Omega}; \R^{n\times n})$
	with the following properties: 
	\begin{itemize}
		\item $\D$ is positive semidefinite everywhere. 
		\item $\D$ allows for a divergence estimate with exponent $\beta \in [\frac{1}{2}, 1)$ and constant $A > 0$ such that $\frac{\beta}{1-\beta}\leq r$ (cf.\ \Cref{definition:div_regularity}). 
		\item $\D$ allows for a compact $L^1(\Omega)$ embedding (cf.\ \Cref{definition:comp_regularity}). 
	\end{itemize}
	As for any $\beta \in [\frac{1}{2}, \frac{2}{3}]$ the condition $\frac{\beta}{1-\beta} \leq r$ is always fulfilled independent of our choice of $r \in [2,\infty)$ and as it is easy to see that, if $\D$ allows for a divergence estimate in accordance with \Cref{definition:div_regularity}, it also allows for a divergence estimate with any larger exponent, we can assume that the parameter $\beta$ seen in the second of the above properties is in fact an element of $[\frac{2}{3}, 1) \subseteq (\frac{1}{2}, 1)$ without loss of generality. Then according to \Cref{remark:div_regularity_consequence}, the aforementioned divergence estimate directly implies that 
	\[
		\D \in W^{1,q}_\mathrm{div}(\Omega;\R^{n\times n}) \subseteq  W^{1,2}_\mathrm{div}(\Omega;\R^{n\times n})\subseteq W^{1,\frac{r}{r-1}}_\mathrm{div}(\Omega;\R^{n\times n})
	\]
	with $q \defs \frac{2\beta}{2\beta - 1}$.
	\\[0.5em]
	Given these assumptions, we now choose an approximate family $(\De)_{\eps \in (0,1)} \subseteq C^2(\overline{\Omega}; \R^{n \times n})$ with $\De$ positive definite on $\overline{\Omega}$, $(\div \De) \cdot \nu = 0$ on $\partial \Omega$ for all $\eps \in (0,1)$ and
	\begin{equation}\label{eq:De_convergence}
	\De \rightarrow \D  \stext{in} W^{1,q}_\mathrm{div}(\Omega;\R^{n\times n}) \cap C^0(\overline{\Omega}; \R^{n\times n}) \;\;\;\; \text{ as } \eps \searrow 0.
	\end{equation}
	We can further choose this family in such a way as to ensure that
	\begin{equation}\label{eq:De_div_estimate}
	\int_\Omega |(\div \De) \cdot \Phi| \leq B \left(\int_\Omega (\Phi \cdot \De \Phi)^\beta  + 1\right)
	\end{equation}
	with $B \defs A + 1$ and
	\begin{equation}\label{eq:De_estimate}
	\D + \eps \leq \De \leq \D + 3\eps
	\end{equation}
	for all $\Phi \in C^0(\overline{\Omega}; \R^n)$ and $\eps \in (0,1)$. These additional properties for the approximation $\De$ essentially mean that the regularity properties assumed for $\D$ are also valid for said approximation in an $\eps$ independent fashion.
	\begin{remark}
		Let us briefly illustrate how such an approximation of $\D = (d_{i,j})_{i,j \in \{1,\dots, n\}}$ can be achieved. This will be a two-step process. We first approximate $\D$ in our desired function space with the appropriate boundary conditions and then, as a second step, we show that, with only slight modification, we can gain the remaining properties from that approximation.
		\\[0.5em]
		For the initial approximation, we assume without loss of generality that $\D$ is smooth. We can do this as it is well-known that a standard convolution argument would give us a smooth approximation of $\D$ in our desired space, which we can then approximate again to gain all additional desired properties. In our case, the key property not covered by such a convolution based method is that we want all our approximate matrices to have very specific boundary values. As such, we will now demonstrate how an approximation of a smooth $\D$ by matrices with exactly this property can be achieved using the continuity properties of semigroups associated with carefully chosen sectorial operators (cf.\ \cite{FeffermanSimultaneousApproximationLebesgue2021}). 
		\\[0.5em]
		To this end, we fix functions $d'_{i,j}$ such that 
		\begin{equation}\label{eq:matrix_modification}
		d_{i,j} = \begin{cases} 
		d'_{i,i} + \sum^n_{l,k = 1} d'_{l,k}, &\text{ if } i = j, \\
		d'_{i,j}, &\text{ if } i\neq j
		\end{cases}
		\end{equation}
		for all $i,j \in \{1,\dots,n\}$. 
		%
		%
		As can be easily seen, the functions $d'_{i,j}$ are linear combinations of the components of $\D$ and therefore smooth as well. We then set $d'_{i,j,\eps} = e^{\eps L_{i,j}} d'_{i,j}$, $\eps \in (0,1)$, where $L_{i,j}$ is the negative Laplacian on $\Omega$ with boundary conditions $\grad \phi \cdot \nu + \frac{1}{2}(\partial_{x_i} \phi) \nu_j + \frac{1}{2}(\partial_{x_j} \phi) \nu_i  = 0$ and $(e^{t L_{i,j}})_{t\geq 0}$ is the associated semigroup. Due to the well-known continuity properties of said semigroup (cf.\ \cite{HenryGeometricTheorySemilinear1981}, \cite{LunardiAnalyticSemigroupsOptimal1995}, \cite{TriebelInterpolationTheoryFunction1978}), we know that
		$d'_{i,j,\eps} \rightarrow d'_{i,j}$ and therefore $d_{i,j,\eps} \rightarrow d_{i,j}$ in $W^{1,q}(\Omega)\cap C^0(\overline{\Omega})$ as $\eps \searrow 0$ with $d_{i,j,\eps}$ defined in an analogous fashion to (\ref{eq:matrix_modification}). Thus, $\De \defs (d_{i,j,\eps})_{i,j \in \{1,\dots, n\}} \rightarrow \D$ in our desired way. Further, 
		\begin{align*}
		(\div \De) \cdot \nu =& \sum^n_{i,j=1} (\partial_{x_j} d_{i,j,\eps}) \nu_i = \sum _{i,j=1, i\neq j}^n  (\partial_{x_j} d_{i,j,\eps}) \nu_i + \sum^n_{i=1} (\partial_{x_i}  d_{i,i,\eps}) \nu_i \\
		=&\sum _{i,j=1, i\neq j}^n  (\partial_{x_j} d'_{i,j,\eps}) \nu_i + \sum^n_{i=1}  \left( \partial_{x_i} d'_{i,i,\eps} +  \sum^n_{l,k = 1} \partial_{x_i}d'_{l,k,\eps} \right) \nu_i \\
		=& \sum_{i,j=1}^n \left(\tfrac{1}{2}(\partial_{x_j} d'_{i,j,\eps}) \nu_i +  \tfrac{1}{2}(\partial_{x_i} d'_{i,j,\eps}) \nu_j \right)  + \sum^n_{l,k =1} \grad d'_{l,k,\eps}  \cdot \nu \\
		=& \sum_{i,j=1}^n \left(  \grad d'_{i,j,\eps} \cdot \nu + \tfrac{1}{2}(\partial_{x_j} d'_{i,j,\eps}) \nu_i +  \tfrac{1}{2}(\partial_{x_i} d'_{i,j,\eps}) \nu_j \right)
		= 0 \numberthis \label{eq:De_boundary_condition_calculation}
		\end{align*}
		on $\partial \Omega$ for all $\eps \in (0,1)$ due to the prescribed boundary conditions of the operators $L_{i,j}$. Thus, we have constructed a suitable approximate family for $\D$ with the correct boundary conditions. 
		\\[0.5em] 
		Having now presented the full argument used to achieve the boundary condition (\ref{eq:De_boundary_condition_calculation}), let us briefly note that we introduced the functions $d'_{i,j}$ to ensure that the operators $L_{i,j}$ have sufficiently non-tangential boundary conditions and are therefore sectorial (cf.\ \cite{LunardiAnalyticSemigroupsOptimal1995}, \cite{TriebelInterpolationTheoryFunction1978}), which is of course necessary for our semigroup based arguments.
		\\[0.5em]
		As our second step, we will now fix one such family of approximations of $\D$ and call it $\De'$, $\eps \in (0,1)$, as we still want to slightly modify it. We can assume that
		\[
		\|\D'_\eps - \D\|_\L{\infty} \leq \eps \stext{ and } \| \div \De' - \div \D\|_\L{\frac{2\beta}{2\beta-1}} \leq \eps^\frac{1}{2}
		\]
		for all $\eps \in (0,1)$ without loss of generality. If we then set $\De \defs \De' + \| \De' - \D \|_\L{\infty} + \eps$, we can ensure that
		\[
		\D + \eps = \De - \De + \D + \eps = \De - \De' + \D  - \|\De' - \D\|_\L{\infty}  \leq \De + \| \De' - \D\|_\L{\infty} - \|\De' - \D\|_\L{\infty} =  \De
		\]
		and
		\[
		\De = \D - \D + \De = \D + \De' - \D + \|\De' - \D\|_\L{\infty} + \eps \leq \D + 2\|\De' - \D\|_\L{\infty} + \eps \leq \D + 3\eps
		\]
		for all $\eps \in (0,1)$ without affecting any of the desired properties that we already derived as we only modify $\D'_\eps$ by adding constants that converge to zero as $\eps \searrow 0$. This gives us (\ref{eq:De_estimate}).
		\\[0.5em]
		To derive the divergence estimate, we first observe that
		\begin{align*}
			\left|\int_\Omega |(\div \De) \cdot \Phi| - \int_\Omega |(\div \D) \cdot \Phi| \right| \leq& \int_\Omega |\div \De - \div \D||\Phi| \leq \|\div \De - \div \D\|_\L{\frac{2\beta}{2\beta - 1}} \|\Phi\|_\L{2\beta} \\
			\leq& \| \eps^\frac{1}{2} \Phi\|_\L{2\beta} \leq \left(\int_\Omega (\Phi \cdot \eps \Phi)^\beta + 1\right) \leq \left(\int_\Omega (\Phi \cdot \De \Phi)^\beta + 1\right)
		\end{align*}
		for all $\eps \in (0,1)$ and $\Phi \in C^0(\overline{\Omega};\R^n)$. We can then further estimate
		\begin{align*}
			\int_\Omega |(\div \De) \cdot \Phi| 
			\leq& \int_\Omega |(\div \D) \cdot \Phi| + \left|\int_\Omega |(\div \De) \cdot \Phi| - \int_\Omega |(\div \D) \cdot \Phi| \right| \\
			\leq& A \left( \int_\Omega (\Phi \cdot \D \Phi)^\beta + 1 \right) + \left( \int_\Omega (\Phi \cdot \De \Phi)^\beta + 1 \right) \\ 
			\leq& (A + 1)\left( \int_\Omega (\Phi \cdot \De \Phi)^\beta + 1 \right) 
		\end{align*}
		for all $\eps \in (0,1)$ and $\Phi \in C^0(\overline{\Omega};\R^n)$ using our assumed divergence estimate for $\D$ and (\ref{eq:De_estimate}). This gives us (\ref{eq:De_div_estimate}) and thus completes the discussion of our construction.
	\end{remark}\noindent
	We will now proceed to construct our approximate initial data. To do this, we first fix families $(u_{0,\eps})_{\eps \in (0,1)}$, $(w'_{0,\eps})_{\eps \in (0,1)} \subseteq C^3(\overline{\Omega})$ of positive functions with $(\De \grad u_{0, \eps})\cdot\nu = (\De \grad w'_{0, \eps})\cdot\nu = 0$ on $\partial \Omega$ and
	\begin{equation*}
	\begin{aligned}
	u_{0,\eps} &\rightarrow u_0 \;\;\;\;&&\text{in } \L{z[\ln(z)]_+},\\ 
	w'_{0,\eps} &\rightarrow \sqrt{w_0} &&\text{in } W^{1,2}(\Omega) \cap C^0(\overline{\Omega})
	\end{aligned}
	\end{equation*} 
	as $\eps \searrow 0$. These families can again be constructed by using convolutions or by a similar semigroup based method as seen before in the much more challenging case of the family $(\De)_{\eps \in(0,1)}$. Positivity of both families can further be achieved by first approximating the function in a non-negative way, which is a property of both convolution and semigroup based methods, and then adding $\eps$ to the resulting approximation as a secondary step. 
	\\[0.5em]
	We then let $w_{0,\eps} \defs (w'_{0,\eps})^2 \in C^3(\overline{\Omega})$ for all $\eps \in (0,1)$ and, because of the properties already established for the family $(w'_{0,\eps})_{\eps \in (0,1)}$, it is straightforward to derive that $w_{0,\eps} > 0$ on $\overline{\Omega}$, $(\De \grad w_{0, \eps})\cdot\nu = 0$ on $\partial \Omega$ and
	\begin{equation*}
	\begin{aligned}
		w_{0,\eps} &\rightarrow w_0 \;\;\;\; &&\text{in } C^0(\overline{\Omega}),\\
		\sqrt{w_{0,\eps}} &\rightarrow \sqrt{w_0} &&\text{in } W^{1,2}(\Omega) \cap C^0(\overline{\Omega})
	\end{aligned}
\end{equation*} 
	as $\eps \searrow 0$. 
	\\[0.5em]	
	One important consequence of the above approximations is that we can fix a uniform constant $M > 0$ such that 
	\begin{equation}\label{eq:weak_De_bounds}
		\|\div \De\|_\L{2} \leq M, \;\;\;\; \|\De\|_\L{\infty} \leq M
	\end{equation}
	and 
	\begin{equation}\label{eq:weak_initial_data_bounds}
		\int_\Omega u_{0,\eps} \leq M, \;\;\;\; \int_\Omega u_{0,\eps} \ln(u_{0,\eps}) \leq M,\;\;\;\; \|w_{0,\eps}\|_\L{\infty} \leq M, \;\;\;\; \int_\Omega \frac{\grad {w_{0, \eps}} \cdot \De \grad {w_{0. \eps}}}{w_{0,\eps}} \leq M
	\end{equation}
	for all $\eps \in (0,1)$.
	\\[0.5em]
	We then consider the approximate systems
	\begin{equation}\label{approx_problem}
	\left\{
	\begin{aligned}
	u_{\eps t} &= \div (\De \grad \ue + \ue \div \De) - \chi \div (\ue\De \grad \we) + \logc \ue(1-\ue^{\loge + \eps - 1}) \;\;\;\; &&\text{ on } \Omega\times(0,\infty), \\
	w_{\eps t} &= - \ue \we \;\;\;\; &&\text{ on } \Omega\times(0,\infty),  \\
	(\De \grad \ue) \cdot \nu &= \chi (\ue\De \grad \we) \cdot \nu - \ue (\div \De) \cdot \nu\;\;\;\; &&\text{ on } \partial\Omega\times(0,\infty)\\
	\ue(\cdot, 0) &= u_{\eps, 0} > 0, \;\; \we(\cdot, 0) = w_{\eps, 0} > 0 \;\;\;\; &&\text{ on } \Omega
	\end{aligned}
	\right. 
	\end{equation}
	and use our already established classical existence theory from \Cref{theorem:classical_solution} to now fix  positive, global classical solutions $(\ue, \we)$ to the above system for each $\eps \in (0,1)$. Note that as $r + \eps > 2$, we do not need to make additional assumptions on the parameters $\chi$ and $\logc$ to ensure that said existence theory is applicable.  
	\subsection{Uniform A Priori Estimates}
	We will now derive the bounds necessary to ensure compactness of our families of approximate classical solutions in function spaces conducive to the construction of our desired weak solutions to (\ref{problem}) as limits of said approximate solutions along a suitable sequence of $\eps \in (0,1)$. 
	\\[0.5em]	
	Apart from the baseline established in \Cref{lemma:absolute_baseline} for the classical existence theory, which can be easily translated to our approximate solutions in an $\eps$-independent fashion, we will now derive some extended bounds based on an energy-type inequality as an additional baseline for later arguments in this section. This type of energy inequality was already used in the one-dimensional case in \cite{WinklerSingularStructureFormation2018}. 
	\begin{lemma} \label{lemma:energy_baseline}
		For each $T > 0$, there exists a constant $C \equiv C(T) > 0$ such that
		\begin{align*}
		&\int_\Omega \ue \ln(\ue) + \int_\Omega \frac{\grad \we \cdot \De \grad \we }{\we} + \int_0^t \int_\Omega \frac{\grad \ue \cdot \De \grad \ue}{\ue} + \int_0^t\int_\Omega \ue^{r+\eps} \ln(\ue) \leq C
		\end{align*}
		holds for all $t \in (0,T)$ and all $\eps \in (0,1)$.
	\end{lemma}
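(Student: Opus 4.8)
The natural approach is a standard entropy/energy functional test. The plan is to differentiate the quantity $\int_\Omega \ue\ln(\ue) + \int_\Omega \frac{\grad\we\cdot\De\grad\we}{\we}$ in time, using the approximate equations in \eqref{approx_problem}, and to show that the resulting dissipation terms $\int_\Omega \frac{\grad\ue\cdot\De\grad\ue}{\ue}$ and $\int_\Omega \ue^{r+\eps}\ln(\ue)$ absorb all the other contributions up to an $\eps$-independent constant. First I would test the first equation with $\ln(\ue) + 1$: partial integration turns the myopic diffusion part $\div(\De\grad\ue)$ into $-\int_\Omega\frac{\grad\ue\cdot\De\grad\ue}{\ue}$, the term $\div(\ue\div\De)$ into $-\int_\Omega (\div\De)\cdot\grad\ue = -\int_\Omega(\div\De)\cdot\grad\ue$ which after writing $\grad\ue = \ue\,\grad\ln\ue$ and Young's inequality can be bounded by $\tfrac14\int_\Omega\frac{\grad\ue\cdot\De\grad\ue}{\ue}$ plus a multiple of $\int_\Omega\ue\,|(\div\De)\cdot\grad\ln\ue|$... the cleaner route is actually to bound it via the divergence estimate \eqref{eq:De_div_estimate} applied to $\Phi = \grad\ue/\ue$ (formally), giving control by $\int_\Omega(\grad\ue\cdot\De\grad\ue/\ue^2)^\beta$ which after another Young step is absorbed into the good dissipation term plus lower-order mass terms. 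The cross-diffusive term $-\chi\div(\ue\De\grad\we)$ becomes $\chi\int_\Omega(\De\grad\we)\cdot\grad\ln\ue\cdot\ue = \chi\int_\Omega\grad\we\cdot\De\grad\ue$, and the logistic term produces $+\mu\int_\Omega\ue(1-\ue^{r+\eps-1})\ln(\ue)$, whose negative part is exactly the desired $-\mu\int_\Omega\ue^{r+\eps}\ln(\ue)$ up to the controllable $\int_\Omega\ue\ln(\ue)$ piece near $\ue \le 1$.

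\textbf{Handling the $w$-functional.} The second main ingredient is computing $\frac{\d}{\d t}\int_\Omega\frac{\grad\we\cdot\De\grad\we}{\we}$ using $\we_t = -\ue\we$. Differentiating, $\grad\we_t = -\we\grad\ue - \ue\grad\we$, so one gets a term $\frac{2}{\we}\grad\we\cdot\De\grad\we_t = -2\grad\we\cdot\De\grad\ue - \frac{2\ue}{\we}\grad\we\cdot\De\grad\we$ plus the contribution $-\frac{\we_t}{\we^2}\grad\we\cdot\De\grad\we = \frac{\ue}{\we}\grad\we\cdot\De\grad\we$ from differentiating the $1/\we$ factor. The key algebraic miracle, familiar from \cite{WinklerSingularStructureFormation2018}, is that the $\we$-only bad terms combine with a fixed sign, leaving essentially $\frac{\d}{\d t}\int_\Omega\frac{\grad\we\cdot\De\grad\we}{\we} \le -2\int_\Omega\grad\we\cdot\De\grad\ue - \int_\Omega\frac{\ue}{\we}\grad\we\cdot\De\grad\we$, and crucially the first term here is exactly $-2/\chi$ times the cross-term that appeared from testing the $u$-equation. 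Adding $\tfrac{\chi}{2}$ times this identity to the entropy inequality makes the two $\int_\Omega\grad\we\cdot\De\grad\ue$ terms cancel exactly. What remains on the right is then only the $u$-mass-type quantities and $\eps$-independent constants coming from the divergence estimate, and I would close the argument by recalling the $L^1$ mass bound (\Cref{lemma:absolute_baseline} translated to $\ue$) together with the Orlicz bound \eqref{eq:weak_initial_data_bounds} on the initial data, then integrating in time over $(0,t)\subseteq(0,T)$.

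\textbf{Main obstacle.} The delicate point is the rigorous justification of the formal testing: $\ln(\ue)$ is not bounded, and $1/\we$ could in principle grow as $t\to T$ (indeed $\we$ decays exponentially, so $1/\we$ grows), so one must check the differential identities are legitimate. Since the $(\ue,\we)$ are \emph{classical} positive solutions on $[0,T]$ with $\we > 0$ on the compact set $\overline\Omega\times[0,T]$, all the integrands are in fact continuous and the boundary terms vanish by the no-flux conditions $(\De\grad\ue)\cdot\nu = \chi(\ue\De\grad\we)\cdot\nu - \ue(\div\De)\cdot\nu$ and $(\De\grad\we)\cdot\nu = 0$ — so at fixed $\eps$ everything is classical; the real work is making every constant \emph{uniform in} $\eps$. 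This is where the $\eps$-independence of the divergence estimate \eqref{eq:De_div_estimate} (with constant $B = A+1$) and of the bounds \eqref{eq:weak_De_bounds}, \eqref{eq:weak_initial_data_bounds} is essential, and where one must be careful that the Young's-inequality splittings when absorbing $(\div\De)$-terms and cross-terms leave coefficients that do not blow up. A secondary nuisance is the sign bookkeeping in the $\we$-functional computation: one should verify that all the $\we$-intrinsic terms genuinely have the favorable sign (they do, essentially because $\De$ is positive semidefinite and $\ue, \we > 0$), and that the boundary integral $\int_{\partial\Omega}\frac{1}{\we}(\De\grad\we)\cdot\nu\,(\cdots)$ arising in the partial integration indeed vanishes — which it does thanks to $(\De\grad\we_{0})\cdot\nu = 0$ propagating via the transport structure of the $w$-equation, or more simply because $\De\grad\we\cdot\nu = 0$ holds for all $t$ (it is preserved since $\grad\we_t = -\grad(\ue\we)$ and one checks the normal component stays zero — though here I would instead just invoke that $\we$ is a classical solution satisfying the stated boundary condition).
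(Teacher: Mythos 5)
Your overall architecture is exactly the paper's: test the first equation with $\ln(\ue)$ (equivalently $\ln(\ue)+1$), compute $\frac{\d}{\d t}\int_\Omega\frac{\grad\we\cdot\De\grad\we}{\we}$ directly from $\we_t=-\ue\we$ (no integration by parts is needed there, so the boundary term you worry about never arises), exploit the exact cancellation of the cross terms $\chi\int_\Omega\grad\ue\cdot\De\grad\we$ after weighting the $w$-functional by $\chi/2$, and close with the mass bound, the initial-data bounds \eqref{eq:weak_initial_data_bounds} and a Gronwall step (which you should make explicit, since the right-hand side retains the full term $\mu\int_\Omega\ue\ln(\ue)$, not merely its part on $\{\ue\le 1\}$).

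There is, however, a genuine gap in your treatment of the divergence term $\int_\Omega(\div\De)\cdot\grad\ue$, which is the one non-standard ingredient. Neither of your two proposed routes works as stated. The Cauchy--Schwarz/Young route against $\int_\Omega\frac{\grad\ue\cdot\De\grad\ue}{\ue}$ would leave a remainder of the form $\int_\Omega\ue\,|\div\De|^2$ weighted by the \emph{inverse} of $\De$, which is precisely what the degeneracy forbids you to control. Your ``cleaner route'' of applying \eqref{eq:De_div_estimate} to $\Phi=\grad\ue/\ue$ bounds the unweighted quantity $\int_\Omega|(\div\De)\cdot\grad\ln\ue|$ rather than $\int_\Omega\ue\,|(\div\De)\cdot\grad\ln\ue|$, and even setting that aside it produces $\int_\Omega\bigl(\frac{\grad\ue\cdot\De\grad\ue}{\ue^2}\bigr)^\beta$, whose Young splitting yields the uncontrollable negative power $\ue^{-\beta/(1-\beta)}$. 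The correct step is to apply the divergence estimate to $\Phi=\grad\ue$ itself, giving $B\int_\Omega(\grad\ue\cdot\De\grad\ue)^\beta+B$, and then to split pointwise via Young using $(\grad\ue\cdot\De\grad\ue)^\beta=\ue^\beta\bigl(\frac{\grad\ue\cdot\De\grad\ue}{\ue}\bigr)^\beta$ into $\frac12\frac{\grad\ue\cdot\De\grad\ue}{\ue}+C\,\ue^{\beta/(1-\beta)}$. The resulting superlinear term $\int_\Omega\ue^{\beta/(1-\beta)}$ is \emph{not} a lower-order mass term (since $\beta\ge\frac12$ forces the exponent to be at least $1$, and after the normalization $\beta\ge\frac23$ at least $2$); it must be absorbed by the logistic dissipation $\mu\int_\Omega\ue^{r+\eps}\ln(\ue)$ via the pointwise inequality $C z^{\beta/(1-\beta)}-\frac{\mu}{2}z^{r}\ln(z)\le K$. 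This is the only place the structural hypothesis $\frac{\beta}{1-\beta}\le r$ enters, and your sketch does not identify it.
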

	\begin{proof}
		Fix $T > 0$.
		\\[0.5em]
		By then testing the first equation in (\ref{approx_problem}) with $\ln(\ue)$ we gain that
		\begin{align*}
		&\hphantom{=\;}\frac{\d}{\d t} \int_\Omega \ue \ln(\ue) - \frac{\d}{\d t}\int_\Omega \ue = \int_\Omega {\ue}_t \ln(\ue)  \\
		&= \int_\Omega  \ln(\ue) \div \left( \De \grad \ue + \ue \div \De \right) - \chi \int_\Omega \ln(\ue) \div (\ue\De \grad \we) + \logc\int_\Omega \ue(1 - \ue^{\loge + \eps - 1}) \ln(\ue) \\
		&= - \int_\Omega \frac{\grad \ue \cdot \De \grad \ue}{\ue} - \int_\Omega (\div \De) \cdot  \grad \ue + \chi \int_\Omega \grad \ue \cdot \De \grad \we + \logc\int_\Omega \ue(1 - \ue^{\loge + \eps - 1}) \ln(\ue) \\
		&\leq - \int_\Omega \frac{\grad \ue \cdot \De \grad \ue}{\ue} + B\int_\Omega (\grad \ue \cdot \De\grad \ue)^\beta + B + \chi \int_\Omega \grad \ue \cdot \De \grad \we + \logc\int_\Omega \ue(1 - \ue^{\loge + \eps - 1}) \ln(\ue) \\
		&\leq  - \frac{1}{2}\int_\Omega \frac{\grad \ue \cdot \De \grad \ue}{\ue} + 2^\frac{\beta}{1-\beta}B^\frac{1}{1-\beta} \int_\Omega \ue^\frac{\beta}{1-\beta} + B + \chi \int_\Omega \grad \ue \cdot \De \grad \we + \logc\int_\Omega \ue(1 - \ue^{\loge + \eps - 1}) \ln(\ue)
		\label{eq:ue_lnue_test} \numberthis
		\end{align*}
		for all $t\in(0,T)$ and $\eps \in (0,1)$ by partial integration, use of the no-flux boundary conditions and the divergence estimate (\ref{eq:De_div_estimate}) combined with Young's inequality. We can then further gain from the second equation in (\ref{approx_problem}) that
		\begin{align*}
		\frac{1}{2}\frac{\d}{\d t}\int_\Omega \frac{\grad \we \cdot \De \grad \we }{\we} &= \int_\Omega \frac{\grad {\we}_t \cdot \De \grad \we}{\we} - \frac{1}{2}\int_\Omega \frac{ {\we}_t (\grad \we \cdot \De \grad \we)}{\we^2} \\
		&= - \int_\Omega \frac{\ue(\grad \we \cdot \De \grad \we)}{\we} -  \int_\Omega \grad \ue \cdot \De \grad \we + \frac{1}{2}\int_\Omega \frac{\ue (\grad \we \cdot \De \grad \we)}{\we} \\
		&=  -\frac{1}{2}\int_\Omega \frac{\ue(\grad \we \cdot \De \grad \we)}{\we} - \int_\Omega \grad \ue \cdot \De \grad \we \leq - \int_\Omega \grad \ue \cdot \De \grad \we  \numberthis \label{eq:grad_we_test}
		\end{align*}
		for all $t\in(0,T)$ and $\eps \in (0,1)$. Combining (\ref{eq:ue_lnue_test}) and (\ref{eq:grad_we_test}) now allows us to further estimate as follows due to the critical $\int_\Omega \grad \ue \cdot \De \grad \we$ terms in both equations neutralizing each other given the correct coefficients:
		\begin{align*}
		&\frac{\d}{\d t}\left\{  \int_\Omega \ue \ln(\ue) - \int_\Omega \ue  + \frac{\chi}{2} \int_\Omega \frac{\grad \we \cdot \De \grad \we }{\we} \right\} + \frac{1}{2}\int_\Omega \frac{\grad \ue \cdot \De \grad \ue}{\ue} \\ 
		&\leq 2^\frac{\divrege}{1-\divrege}B^\frac{1}{1-\beta}\int_\Omega \ue^\frac{\divrege}{1-\divrege}  + B + \logc \int_\Omega \ue \ln(\ue) - \logc \int_\Omega \ue^{\loge+\eps} \ln( \ue ) \numberthis \label{eq:energy_estimate_derivation}
		\end{align*}
		for all $t \in (0,T)$ and $\eps \in (0,1)$. 
		\\[0.5em]
		As $\frac{\divrege}{1-\divrege} \leq \loge$ by assumption, there exists a constant $K > 0$ (independent of $\eps$) such that
		\[
		2^\frac{\divrege}{1-\divrege}B^\frac{1}{1-\divrege} z^{\frac{\divrege}{1-\divrege}} - \frac{\logc}{2} z^{\loge+\eps} \ln(z) \leq 2^\frac{\divrege}{1-\divrege}B^\frac{1}{1-\divrege} z^{\frac{\divrege}{1-\divrege}} - \frac{\logc}{2} z^{\loge} \ln(z) \leq K
		\]
		for all $z \geq 0$ and $\eps \in (0,1)$. Given this, we can then further estimate in (\ref{eq:energy_estimate_derivation}) to see that
		\begin{align*}
		&\frac{\d}{\d t}\left\{  \int_\Omega \ue \ln(\ue) + \frac{\chi}{2} \int_\Omega \frac{\grad \we \cdot \De \grad \we }{\we} \right\} +  \frac{1}{2} \int_\Omega \frac{\grad \ue \cdot \De \grad \ue}{\ue} + \frac{\mu}{2}\int_\Omega \ue^{r+\eps} \ln(\ue) \\
		&\leq \logc \int_\Omega \ue \ln(\ue) + \frac{\d}{\d t}\int_\Omega \ue + K|\Omega| + B
		\end{align*}
		for all $t \in (0,T)$ and $\eps \in (0,1)$. Time integration in combination with Gronwalls inequality and the uniform $L^1(\Omega)$ bound for $\ue$ due to \Cref{lemma:absolute_baseline} as well as the uniform initial data bounds from (\ref{eq:weak_initial_data_bounds}) then yields our desired result as the above differential inequality essentially means that the growth of the considered terms can be at most exponential.
	\end{proof}\noindent
	We now further extract some relevant but straightforward additional bounds for our approximate solutions from the previous lemma.
	\begin{corollary}\label{lemma:basic_bounds_weak}
		For each $T > 0$, there exists $C \equiv C(T) > 0$ such that
		\begin{equation}
		\int_0^T \int_\Omega u_\eps^{r+\eps} \ln(\ue^{r+\eps}) \leq C, \;\;\;\; \int_0^T \int_\Omega u_\eps^r \ln(\ue) \leq C,  \label{eq:basic_bounds_weak_u} 
		\end{equation}
		\begin{equation}
		\int_0^T \|\ue^\frac{1}{2}(\cdot, s)\|_{\WD{2}}^2 \d s \leq C, \;\;\;\; \int_0^T  \|\ue(\cdot, s)\|^\frac{2r}{r+1}_\WD{\frac{2r}{r+1}} \d s \leq \int_0^T \|\ue(\cdot, s)\|^\frac{2r}{r+1}_{W^{1,\frac{2r}{r+1}}_{\De}(\Omega)} \d s \leq C  \label{eq:basic_bounds_weak_grad_u}
		\end{equation}
		and
		\begin{equation}
		\int_0^T \|\we(\cdot, s)\|_\WD{2}^2 \d s \leq \int_0^T \|\we(\cdot, s)\|_{W^{1,2}_{\De}(\Omega)}^2 \d s \leq C \label{eq:basic_bounds_weak_w}
		\end{equation}
		for all $\eps \in (0,1)$.
	\end{corollary}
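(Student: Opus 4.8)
The plan is to read off every bound in \eqref{eq:basic_bounds_weak_u}--\eqref{eq:basic_bounds_weak_w} from the energy baseline \Cref{lemma:energy_baseline} together with the mass bound and the $\L{\infty}$ bound for $\we$ from \Cref{lemma:absolute_baseline}; nothing beyond elementary pointwise estimates and Young's inequality should be needed. Throughout I would fix $T>0$, write $C=C(T)$ for the constant from \Cref{lemma:energy_baseline}, and keep in mind that the four quantities it controls are bounded only from \emph{above}, since $\int_0^t\int_\Omega \ue^{r+\eps}\ln(\ue)$ has a negative contribution on $\{\ue<1\}$. The one preliminary fact to record is that $z\mapsto z^{r+\eps}\ln z$ attains its minimum on $(0,\infty)$ at $z=e^{-1/(r+\eps)}$ with value $-\tfrac{1}{e(r+\eps)}\ge-\tfrac{1}{2e}$, uniformly in $\eps\in(0,1)$ and $r\ge2$; combined with \Cref{lemma:energy_baseline} this upgrades the one-sided control to $\int_0^T\int_\Omega|\ue^{r+\eps}\ln(\ue)|\le C$, which I would use repeatedly.

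For \eqref{eq:basic_bounds_weak_u}, write $\ue^{r+\eps}\ln(\ue^{r+\eps})=(r+\eps)\,\ue^{r+\eps}\ln(\ue)$ and use $r+\eps\le r+1$; the first inequality is then immediate from the preceding paragraph. For the second, split $\Omega$ into $\{\ue\ge1\}$, where $\ue^r\ln(\ue)\le\ue^{r+\eps}\ln(\ue)$ since $\ln(\ue)\ge0$, and $\{\ue<1\}$, where $\ue^r\ln(\ue)\le0$, and absorb the piece removed on $\{\ue<1\}$ using the uniform lower bound above. Splitting instead at $\{\ue>e\}$ versus $\{\ue\le e\}$ (on which $\ue^r\le\ue^r\ln(\ue)$ resp.\ $\ue^r\le e^r$) yields, as a byproduct I would flag for later, the uniform bound $\int_0^T\int_\Omega \ue^r\le C$.

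For \eqref{eq:basic_bounds_weak_grad_u}: the $\ue^{1/2}$ bound follows from the chain rule identity $\grad(\ue^{1/2})\cdot\D\grad(\ue^{1/2})=\tfrac14\,\ue^{-1}(\grad\ue\cdot\D\grad\ue)\le\tfrac14\,\ue^{-1}(\grad\ue\cdot\De\grad\ue)$ (the inequality from $\D\le\De$, which holds pointwise by \eqref{eq:De_estimate}), so that $\int_0^T\|\ue^{1/2}(\cdot,s)\|_{\WD{2}}^2\,\d s$ is controlled by a multiple of $\int_0^T\int_\Omega\ue+\int_0^T\int_\Omega\tfrac{\grad\ue\cdot\De\grad\ue}{\ue}$, hence by \Cref{lemma:absolute_baseline} (with \eqref{eq:weak_initial_data_bounds}) and \Cref{lemma:energy_baseline}. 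The two leftmost inequalities in \eqref{eq:basic_bounds_weak_grad_u} (and the one in \eqref{eq:basic_bounds_weak_w}) are again just $\D\le\De$, which gives $\|\cdot\|_{\WD{p}}\le\|\cdot\|_{W^{1,p}_{\De}(\Omega)}$ on functions in $C^1(\overline{\Omega})$. The crux is the $W^{1,2r/(r+1)}_{\De}(\Omega)$ bound: with $p\defs\tfrac{2r}{r+1}$, so $\tfrac p2=\tfrac{r}{r+1}$, Young's inequality with exponents $\tfrac{r+1}{r}$ and $r+1$ gives
\[
(\grad\ue\cdot\De\grad\ue)^{r/(r+1)}=\Big(\tfrac{\grad\ue\cdot\De\grad\ue}{\ue}\Big)^{r/(r+1)}\ue^{r/(r+1)}\le\tfrac{r}{r+1}\,\tfrac{\grad\ue\cdot\De\grad\ue}{\ue}+\tfrac{1}{r+1}\,\ue^{r},
\]
and integrating this over $\Omega\times(0,T)$ and invoking \Cref{lemma:energy_baseline} together with the auxiliary $\ue^r$ bound controls $\int_0^T\|\grad\ue\|^p_{L^p_{\De}(\Omega)}\,\d s$; adding $\int_0^T\int_\Omega\ue^{p}\le\int_0^T\int_\Omega(1+\ue^r)$ (legitimate since $p\le r$) and using $(a+b)^p\le2^p(a^p+b^p)$ completes \eqref{eq:basic_bounds_weak_grad_u}. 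Finally, for \eqref{eq:basic_bounds_weak_w} I would use $\|\we(\cdot,s)\|_{\L{\infty}}\le\|w_{0,\eps}\|_{\L{\infty}}\le M$ from \Cref{lemma:absolute_baseline} and \eqref{eq:weak_initial_data_bounds}, together with $\int_\Omega\grad\we\cdot\De\grad\we=\int_\Omega\we\,\tfrac{\grad\we\cdot\De\grad\we}{\we}\le M\int_\Omega\tfrac{\grad\we\cdot\De\grad\we}{\we}\le MC$ for each $s\in(0,T)$ by \Cref{lemma:energy_baseline}, and $\int_\Omega\we^2\le M^2|\Omega|$, and then integrate in time. I expect no genuine obstacle here: the only thing demanding care is the sign bookkeeping around the logarithmic terms — which are controlled only from above in \Cref{lemma:energy_baseline} and must always be paired with the uniform-in-$\eps$ lower bound for $z^{r+\eps}\ln z$ before being used — and ensuring that the Young split above lands exactly on the two quantities $\tfrac{\grad\ue\cdot\De\grad\ue}{\ue}$ and $\ue^r$ already under control.
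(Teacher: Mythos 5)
Your proof is correct and follows essentially the same route as the paper's: every bound is read off \Cref{lemma:energy_baseline} and \Cref{lemma:absolute_baseline} via the identity $\grad\ue^{1/2}\cdot\De\grad\ue^{1/2}=\tfrac14\ue^{-1}(\grad\ue\cdot\De\grad\ue)$, the Young split of $(\grad\ue\cdot\De\grad\ue)^{r/(r+1)}$ with exponents $\tfrac{r+1}{r}$ and $r+1$, the pointwise comparison $\D\le\De$ from (\ref{eq:De_estimate}), and $\int_\Omega\grad\we\cdot\De\grad\we\le\|\we\|_{\L{\infty}}\int_\Omega\we^{-1}\grad\we\cdot\De\grad\we$. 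The only cosmetic differences are that the paper handles the sign issues with the single pointwise inequalities $z^r\ln z\le z^{r+\eps}\ln z$, $z^{r+\eps}\ln(z^{r+\eps})\le(r+1)z^{r+\eps}\ln z+1$ and $z^r\le rz^r\ln z+1$ rather than your domain splittings, and your explicit uniform lower bound for $z^{r+\eps}\ln z$ (giving the absolute-value version of the bound) is a slight strengthening that the paper only invokes implicitly later.
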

	\begin{proof}
		Fix $T > 0$.
		\\[0.5em]
		Then given that
		\[
		\int_\Omega \grad \we \cdot \De \grad \we \leq \|\we\|_\L{\infty} \int_\Omega \frac{\grad \we \cdot \De \grad \we}{\we}
		\]
		for all $t \in(0,T)$ and $\eps \in (0,1)$ as well as knowing that $\D \leq \De$ according to (\ref{eq:De_estimate}) for all $\eps \in (0,1)$, \Cref{lemma:energy_baseline} combined with \Cref{lemma:absolute_baseline}
		and (\ref{eq:weak_initial_data_bounds}) yields  (\ref{eq:basic_bounds_weak_w}). For the definition of the relevant spaces see \Cref{definition:spaces}.
		\\[0.5em]
		As 
		\[
		z^{r} \ln(z) \leq z^{r+\eps}\ln(z) \stext{ and } z^{r+\eps} \ln(z^{r+\eps}) = (r+\eps) z^{r+\eps}\ln(z) \leq (r+1)z^{r+\eps}\ln(z) + 1
		\]
		for all $z\geq 0$ and $\eps \in (0,1)$, the result (\ref{eq:basic_bounds_weak_u}) follows directly from \Cref{lemma:energy_baseline}.
		\\[0.5em]
		To address the last remaining result (\ref{eq:basic_bounds_weak_grad_u}), we now note that
		\[
		\int_\Omega \frac{\grad \ue \cdot \De \grad \ue}{\ue} = 4\int_\Omega \grad \ue^\frac{1}{2} \cdot \De \grad \ue^\frac{1}{2}
		\]
		and
		\begin{align*}
		\int_\Omega \left( \grad \ue \cdot \De \grad \ue \right)^\frac{\frac{2r}{r+1}}{2} &=
		\int_\Omega \ue^\frac{r}{r+1} \left( \frac{\grad \ue \cdot \De \grad \ue}{\ue} \right)^\frac{r}{r+1} \\
		&\leq \int_\Omega \ue^r + \int_\Omega \frac{\grad \ue \cdot \De \grad \ue}{\ue} \leq r\int_\Omega \ue^r \ln(\ue) + |\Omega| + \int_\Omega \frac{\grad \ue \cdot \De \grad \ue}{\ue} 
		\end{align*}
		for all $t \in(0,T)$ and $\eps \in (0,1)$ due to Young's inequality. 
		Given this, the result (\ref{eq:basic_bounds_weak_grad_u}) also follows directly from \Cref{lemma:energy_baseline} and the fact that $\D \leq \De$ for all $\eps \in (0,1)$ according to (\ref{eq:De_estimate}).
	\end{proof}\noindent
	By another testing procedure for the first equation in (\ref{approx_problem}), which is very similar to the one already used by us in the proof of \Cref{lemma:energy_baseline}, we will now derive our final preliminary set of bounds for this section.
	\begin{lemma}\label{lemma:additional_baseline}
		For each $T > 0$, there exists a constant $C\equiv C(T) > 0$ such that
		\[
			\int_0^T \int_\Omega \ue^{-\frac{1}{2}} |(\div \De) \cdot \grad \ue| \leq C \stext{ and } \int_0^T \int_\Omega u^{-\frac{3}{2}}\left(\grad \ue \cdot \De \grad \ue\right) \leq C
		\]
		for all $\eps \in (0,1)$.
	\end{lemma}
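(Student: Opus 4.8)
The plan is to imitate the proof of \Cref{lemma:energy_baseline}, but now testing the first equation in (\ref{approx_problem}) with $-\ue^{-1/2}$, which is admissible since each $\ue$ is a positive classical solution. Writing that equation in divergence form with the single flux $\De\grad\ue + \ue\,\div\De - \chi\ue\De\grad\we$, whose normal component vanishes on $\partial\Omega$ by the boundary condition in (\ref{approx_problem}), partial integration together with $\int_\Omega{\ue}_t(-\ue^{-1/2}) = -2\frac{\d}{\d t}\int_\Omega\sqrt{\ue}$ should give
\[
-2\frac{\d}{\d t}\int_\Omega\sqrt{\ue} + \frac12\int_\Omega\ue^{-\frac32}(\grad\ue\cdot\De\grad\ue) = -\frac12\int_\Omega\ue^{-\frac12}(\div\De)\cdot\grad\ue + \frac\chi2\int_\Omega\ue^{-\frac12}(\grad\ue\cdot\De\grad\we) - \mu\int_\Omega\sqrt{\ue} + \mu\int_\Omega\ue^{r+\eps-\frac12}
\]
for all $t$. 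Integrating over $(0,t)$ and using $\sqrt{z}\le z+1$ together with \Cref{lemma:absolute_baseline} and (\ref{eq:weak_initial_data_bounds}), the endpoint term $2\int_\Omega\sqrt{\ue(\cdot,t)}$ arising from the time derivative and the term $-\mu\int_0^t\int_\Omega\sqrt{\ue}$ are controlled uniformly in $\eps$, while $\int_0^t\int_\Omega\ue^{r+\eps-\frac12}\le\int_0^T\int_\Omega(\ue^{r+\eps}+1)$ is bounded by \Cref{lemma:basic_bounds_weak} after the usual splitting according to whether $\ue^{r+\eps}\ge e$.

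This reduces matters to the divergence term and the cross term on the right. For the former, I would invoke the $\eps$-uniform divergence estimate (\ref{eq:De_div_estimate}) with the continuous choice $\Phi\defs\ue^{-1/2}\grad\ue$, giving $\int_\Omega\ue^{-\frac12}|(\div\De)\cdot\grad\ue|\le B\big(\int_\Omega(\ue^{-1}\grad\ue\cdot\De\grad\ue)^\beta+1\big)$; then, writing $(\ue^{-1}\grad\ue\cdot\De\grad\ue)^\beta=\ue^{\beta/2}\,(\ue^{-3/2}\grad\ue\cdot\De\grad\ue)^\beta$ and applying Young's inequality, one absorbs a term $\eta\int_\Omega\ue^{-3/2}(\grad\ue\cdot\De\grad\ue)$ into the left-hand side and is left with a multiple of $\int_\Omega\ue^{\beta/(2(1-\beta))}$. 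Here the assumption $\frac{\beta}{1-\beta}\le r$ is precisely what forces $\frac{\beta}{2(1-\beta)}\le\frac r2$, so that $\ue^{\beta/(2(1-\beta))}\le 1+\ue^{r}$ and its space-time integral is bounded by \Cref{lemma:basic_bounds_weak}. For the cross term I would use the identity $\ue^{-1/2}\grad\ue=2\grad\sqrt{\ue}$, turning $\frac\chi2\int_0^t\int_\Omega\ue^{-1/2}(\grad\ue\cdot\De\grad\we)$ into $\chi\int_0^t\int_\Omega\grad\sqrt{\ue}\cdot\De\grad\we$, and then estimate this by the Cauchy--Schwarz inequality, first for the positive semidefinite form $\De$ and then in $L^2((0,t)\times\Omega)$, against $\int_0^t\int_\Omega\grad\sqrt{\ue}\cdot\De\grad\sqrt{\ue}=\frac14\int_0^t\int_\Omega\frac{\grad\ue\cdot\De\grad\ue}{\ue}$ (bounded by \Cref{lemma:energy_baseline}) and $\int_0^t\int_\Omega\grad\we\cdot\De\grad\we$ (bounded by \Cref{lemma:basic_bounds_weak}); notably this term requires no absorption.

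Collecting these estimates, choosing $\eta$ so small that the absorbed term carries coefficient at most $\frac14$, and noting that $\int_0^T\int_\Omega\ue^{-3/2}(\grad\ue\cdot\De\grad\ue)$ is a priori finite for each fixed $\eps$ (since $\ue$ is positive and $C^{2,1}$, hence bounded away from $0$, on $\overline\Omega\times[0,T]$), one obtains the second claimed bound; substituting it back into the divergence-estimate step then yields the first. I expect the divergence term to be the main obstacle: the point is to realize that (\ref{eq:De_div_estimate}) applied to $\Phi=\ue^{-1/2}\grad\ue$ reproduces exactly a subcritical power of the quantity to be controlled, so a small-constant Young absorption closes the loop, and that the exponent condition $\frac{\beta}{1-\beta}\le r$ is what keeps the leftover power of $\ue$ within the reach of \Cref{lemma:basic_bounds_weak}. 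The cross term, by contrast, looks at first as though it might need an $L^\infty$ bound on $\ue$, but dissolves cleanly once rewritten via $\grad\sqrt{\ue}$.
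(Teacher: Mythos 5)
Your proposal is correct and follows essentially the same route as the paper: the same test function $-\ue^{-1/2}$, the same resulting identity with vanishing boundary terms, and the same control of the endpoint, logistic and cross terms (the paper, like you, handles the cross term by Cauchy--Schwarz/Young against $\int_\Omega\frac{\grad\ue\cdot\De\grad\ue}{\ue}$ and $\int_\Omega\grad\we\cdot\De\grad\we$). The one genuine difference is the treatment of the $(\div\De)$-term, which you identify as the main obstacle but which is in fact the easiest part: the paper writes $\ue^{-1/2}\grad\ue=2\grad\ue^{1/2}$, applies (\ref{eq:De_div_estimate}) with $\Phi=\grad\ue^{1/2}$, and then simply uses $z^\beta\le z+1$ (as $\beta<1$) to obtain $\int_\Omega\ue^{-1/2}|(\div\De)\cdot\grad\ue|\le 2B\int_\Omega\frac{\grad\ue\cdot\De\grad\ue}{\ue}+2B(1+|\Omega|)$, which is already bounded by \Cref{lemma:energy_baseline}. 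This yields the first asserted estimate outright, before any testing, and it is then inserted into the tested identity to get the second. Your Young-absorption into $\int_\Omega\ue^{-3/2}(\grad\ue\cdot\De\grad\ue)$ also closes (and you rightly record the a priori finiteness needed to absorb), but it is more elaborate than necessary; in particular, the condition $\frac{\beta}{1-\beta}\le r$ plays no role in this lemma --- it is consumed earlier, in the energy inequality of \Cref{lemma:energy_baseline}.
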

	\begin{proof}
		Fix $T > 0$.
		\\[0.5em]
		We first note that 
		\begin{align*}
			\int_\Omega \ue^{-\frac{1}{2}} |(\div \De) \cdot \grad \ue| &= 2\int_\Omega |(\div \De) \cdot \grad \ue^\frac{1}{2}| \leq 2B \int_\Omega \left(\grad \ue^\frac{1}{2} \cdot \De \grad \ue^\frac{1}{2}\right)^\beta + 2B  \\
			&= 2B \int_\Omega \left(\frac{\grad \ue \cdot \De \grad \ue}{\ue}\right)^\beta + 2B\leq 2B\int_\Omega \frac{\grad \ue \cdot \De \grad \ue}{\ue} + 2B(1+|\Omega|)\numberthis \label{eq:u_-12+grad_estimate}
		\end{align*}
		for all $t \in (0,T)$ and $\eps \in (0,1)$ due to (\ref{eq:De_div_estimate}). Given the bounds established in \Cref{lemma:energy_baseline}, this directly gives us the first half of our desired result.
		\\[0.5em]
		We then further test the first equation in (\ref{approx_problem}) with $-\ue^{-\frac{1}{2}}$ to derive that
		\begin{align*}
			-2 \frac{\d}{\d t}\int_\Omega \ue^\frac{1}{2} &= -\int_\Omega \ue^{-\frac{1}{2}} u_{\eps t}
			\\
			&= -\int_\Omega  \ue^{-\frac{1}{2}} \div \left( \De \grad \ue + \ue \div \De \right) + \chi \int_\Omega \ue^{-\frac{1}{2}} \div (\ue\De \grad \we) - \logc\int_\Omega \ue^\frac{1}{2}(1 - \ue^{\loge + \eps - 1}) \\
			&= -\frac{1}{2}\int_\Omega \ue^{-\frac{3}{2}} (\grad \ue \cdot \De \grad \ue)  -\frac{1}{2} \int_\Omega \ue^{-\frac{1}{2}}((\div \De) \cdot \grad \ue) 
			\\
			&\hphantom{=\;}+ \frac{\chi}{2} \int_\Omega \ue^{-\frac{1}{2}}(\grad\ue \cdot \De \grad \we) - \logc\int_\Omega \ue^\frac{1}{2}(1 - \ue^{\loge + \eps - 1}) \\
			& \leq -\frac{1}{2}\int_\Omega \ue^{-\frac{3}{2}} (\grad \ue \cdot \De \grad \ue) + \frac{1}{2}\int_\Omega \ue^{-\frac{1}{2}} |(\div \De) \cdot \grad \ue| \\
			&\hphantom{=\;}+ \frac{\chi}{4} \int_\Omega \frac{\grad \ue \cdot \De \grad \ue}{\ue}  + \frac{\chi}{4} \int_\Omega \grad \we \cdot \De \grad \we + \logc\int_\Omega \ue^{r+\eps-\frac{1}{2}}
		\end{align*}
		for all $t\in(0,T)$ and $\eps \in (0,1)$  by partial integration, use of the no-flux boundary conditions and the Cauchy--Schwarz inequality combined with Young's inequality. This then immediately implies
		\begin{align*}
			\frac{1}{2}\int_\Omega \ue^{-\frac{3}{2}} (\grad \ue \cdot \De \grad \ue) &\leq 	2 \frac{\d}{\d t}\int_\Omega \ue^\frac{1}{2}  + \frac{1}{2}\int_\Omega \ue^{-\frac{1}{2}} |(\div \De) \cdot \grad \ue| \\			
			&\hphantom{=\;} + \frac{\chi}{4} \int_\Omega \frac{\grad \ue \cdot \De \grad \ue}{\ue} + \frac{\chi}{4} \int_\Omega \grad \we \cdot \De \grad \we +  \logc \int_\Omega \ue^{r+\eps-\frac{1}{2}}  \numberthis \label{eq:secondary_ue_gradient_estimate}
		\end{align*}
		for all $t\in(0,T)$ and $\eps \in (0,1)$. As further
		\[
			\int_\Omega \ue^{\loge + \eps - \frac{1}{2}} \leq \int_\Omega \ue^{r+\eps} + |\Omega| \leq \int_\Omega \ue^{r+\eps}\ln(\ue^{r+\eps}) + 2|\Omega|
		\]
		for all $t\in(0,T)$ and $\eps \in (0,1)$, the inequality (\ref{eq:secondary_ue_gradient_estimate}) combined with the already established bounds from \Cref{lemma:absolute_baseline}, \Cref{lemma:energy_baseline} and (\ref{eq:weak_initial_data_bounds}) as well as (\ref{eq:u_-12+grad_estimate}) gives us our desired estimate after an integration in time.
	\end{proof}
	\subsection{Construction of Weak Solutions}
	As our final preparation for a now soon following compactness argument  (based on the Aubin--Lions lemma), which is used to construct the candidates for our weak solutions, we will now prepare uniform integrability estimates for the time derivatives of $\ue^{1/2}$ and $\we$. 
	\\[0.5em]
	Note that the construction of a solution candidate for the second solution component $w$ could likely be achieved by less powerful means. But as we will already need to employ fairly extensive compact embedding arguments to handle the first solution components $\ue$ anyway and deriving the necessary additional uniform bounds for $\we$ is trivial, we will use the same compactness argument for the second solution component as well for the sake of uniformity of presentation.
	\begin{lemma}\label{lemma:dual_bounds_weak}
		Then for each $T > 0$, there exists $C \equiv C(T) > 0$ such that
		\[
		\int_0^T \|(\ue^\frac{1}{2})_t(\cdot, t)\|_{(W^{n+1,2}(\Omega))^*} \d t \leq C \stext{ and } \int_0^T \|w_{\eps t}(\cdot, t)\|_{(W^{n+1,2}(\Omega))^*} \d t \leq C 
		\]
		for all $\eps \in (0,1)$.
	\end{lemma}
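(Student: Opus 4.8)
The plan is to argue by duality. For fixed $t$, both $(\ue^\frac{1}{2})_t(\cdot,t)$ and $w_{\eps t}(\cdot,t)$ are smooth functions (recall $\ue>0$ is a classical solution), so their $(W^{n+1,2}(\Omega))^*$-norms equal the supremum of $|\int_\Omega(\ue^\frac{1}{2})_t\,\psi|$, respectively $|\int_\Omega w_{\eps t}\,\psi|$, over all $\psi\in W^{n+1,2}(\Omega)$ with $\|\psi\|_{W^{n+1,2}(\Omega)}\leq1$. Since $n\in\{2,3\}$, Sobolev embedding gives $W^{n+1,2}(\Omega)\hookrightarrow W^{1,\infty}(\Omega)$, so every such $\psi$ obeys $\|\psi\|_\L{\infty}+\|\grad\psi\|_\L{\infty}\leq C_S$ for a fixed $C_S$. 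The bound for $w_{\eps t}$ is then immediate: testing the second equation in (\ref{approx_problem}) yields $|\int_\Omega w_{\eps t}\,\psi|=|\int_\Omega\ue\we\psi|\leq C_S\|\we\|_\L{\infty}\int_\Omega\ue$, and $\|\we\|_\L{\infty}$ and $\int_\Omega\ue$ are bounded uniformly in $\eps$ and $t\in(0,T)$ by \Cref{lemma:absolute_baseline} together with (\ref{eq:weak_initial_data_bounds}); hence $\|w_{\eps t}(\cdot,t)\|_{(W^{n+1,2}(\Omega))^*}\leq C$ and its integral over $(0,T)$ is at most $CT$.

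For the first component I would write $(\ue^\frac{1}{2})_t=\tfrac{1}{2}\ue^{-\frac{1}{2}}u_{\eps t}$, insert the first equation of (\ref{approx_problem}), and note that the combined flux $F_\eps\defs\De\grad\ue+\ue\div\De-\chi\ue\De\grad\we$ satisfies $F_\eps\cdot\nu=0$ on $\partial\Omega$ by the no-flux boundary condition. Integrating $\tfrac{1}{2}\ue^{-\frac{1}{2}}\psi\,\div F_\eps$ by parts thus produces no boundary term, and with $\grad(\ue^{-\frac{1}{2}}\psi)=-\tfrac{1}{2}\ue^{-\frac{3}{2}}\psi\grad\ue+\ue^{-\frac{1}{2}}\grad\psi$ one arrives at an identity of the form
\[
\int_\Omega(\ue^\frac{1}{2})_t\,\psi=\tfrac{1}{4}\int_\Omega\ue^{-\frac{3}{2}}\psi\,(\grad\ue\cdot F_\eps)-\tfrac{1}{2}\int_\Omega\ue^{-\frac{1}{2}}(\grad\psi\cdot F_\eps)+\tfrac{\mu}{2}\int_\Omega\psi\,(\ue^\frac{1}{2}-\ue^{r+\eps-\frac{1}{2}}).
\]
Expanding $F_\eps$, the right-hand side is controlled (up to the harmless factor $C_S$) by the quantities $\int_\Omega\ue^{-\frac{3}{2}}(\grad\ue\cdot\De\grad\ue)$, $\int_\Omega\ue^{-\frac{1}{2}}|(\div\De)\cdot\grad\ue|$ and $\int_\Omega\ue^{-\frac{1}{2}}|\grad\ue\cdot\De\grad\we|$, by the terms carrying a factor $\grad\psi$, namely $\int_\Omega|\grad\psi\cdot\De\grad\ue^\frac{1}{2}|$, $\int_\Omega\ue^\frac{1}{2}|\grad\psi\cdot\div\De|$ and $\int_\Omega\ue^\frac{1}{2}|\grad\psi\cdot\De\grad\we|$, and by $\int_\Omega(\ue^\frac{1}{2}+\ue^{r+\eps-\frac{1}{2}})$.

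Each of these is handled with estimates already available. The first two singular terms are exactly the quantities shown to lie in $L^1((0,T))$ uniformly in $\eps$ in \Cref{lemma:additional_baseline}. For the third, the Cauchy--Schwarz inequality gives $\int_\Omega\ue^{-\frac{1}{2}}|\grad\ue\cdot\De\grad\we|\leq\big(\int_\Omega\frac{\grad\ue\cdot\De\grad\ue}{\ue}\big)^\frac{1}{2}\big(\int_\Omega\grad\we\cdot\De\grad\we\big)^\frac{1}{2}$, and a further Cauchy--Schwarz in $t$ reduces its time integral to the product of $\int_0^T\int_\Omega\frac{\grad\ue\cdot\De\grad\ue}{\ue}$ and $\int_0^T\int_\Omega\grad\we\cdot\De\grad\we$, both finite by \Cref{lemma:energy_baseline} and (\ref{eq:basic_bounds_weak_w}). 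The $\grad\psi$-terms use $\|\grad\psi\|_\L{\infty}\leq C_S$, the bounds $\|\De\|_\L{\infty},\|\div\De\|_\L{2}\leq M$ from (\ref{eq:weak_De_bounds}), the uniform mass bound for $\ue$, and — after writing $\ue^{-\frac{1}{2}}\grad\psi\cdot\De\grad\ue=2\,\sqrt{\De}\grad\psi\cdot\sqrt{\De}\grad\ue^\frac{1}{2}$ and bounding $|\sqrt{\De}\grad\psi|\leq M^\frac{1}{2}C_S$ — the estimates $\int_0^T\|\ue^\frac{1}{2}\|_\WD{2}^2\leq C$ and $\int_0^T\|\we\|_\WD{2}^2\leq C$ from \Cref{lemma:basic_bounds_weak}, again with a Cauchy--Schwarz split in time for the mixed products. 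Finally $\ue^{r+\eps-\frac{1}{2}}\leq\ue^{r+\eps}+1$ and the $L^1((0,T)\times\Omega)$-bound on $\ue^{r+\eps}$ that follows from (\ref{eq:basic_bounds_weak_u}) take care of the logistic term. Altogether one obtains $|\int_\Omega(\ue^\frac{1}{2})_t\,\psi|\leq(g_\eps(t)+C)\,\|\psi\|_{W^{n+1,2}(\Omega)}$ with $\sup_{\eps\in(0,1)}\int_0^T g_\eps\leq C$; taking the supremum over admissible $\psi$ and integrating over $(0,T)$ finishes the proof.

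The argument is essentially bookkeeping, since the genuinely delicate estimates — those pairing negative powers of $\ue$ with the only $L^2$-regular (and possibly degenerate) divergence of $\De$ and with the gradient of $\we$ — were already isolated in \Cref{lemma:energy_baseline}, \Cref{lemma:basic_bounds_weak} and \Cref{lemma:additional_baseline}. The main point requiring care is to integrate by parts against the \emph{combined} diffusion--taxis flux $F_\eps$ rather than its separate pieces, so that the no-flux boundary condition annihilates the boundary contribution; beyond that, the only thing to watch is that each resulting term be merely integrable (not uniformly bounded) in time, which is why the products of two square-root-type factors are split via Cauchy--Schwarz in the time variable.
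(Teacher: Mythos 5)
Your proposal is correct and follows essentially the same route as the paper: the $w_{\eps t}$ bound via the $L^\infty$ and mass estimates, and for $(\ue^{1/2})_t$ a duality argument testing the first equation against $\ue^{-1/2}\psi$, integrating by parts with the no-flux boundary condition, and controlling the resulting terms by \Cref{lemma:energy_baseline}, \Cref{lemma:basic_bounds_weak} and \Cref{lemma:additional_baseline}. The only cosmetic difference is that you split the mixed square-root products by Cauchy--Schwarz in time, whereas the paper absorbs them via Young's inequality into a single sum of quantities in $L^1((0,T))$; both are fine.
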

	\begin{proof}
		Fix $T > 0$.
		\\[0.5em]
		We then begin by noting that the bound for ${\we}_t$ is an immediate and straightforward consequence of \Cref{lemma:absolute_baseline} combined with (\ref{eq:weak_initial_data_bounds}) and the second equation in (\ref{approx_problem}) as well as the fact that $W^{n+1, 2}(\Omega)$ embeds continuously into $L^\infty(\Omega)$. 
		\\[0.5em]
		As such, we now focus our attention on deriving the $(\ue^\frac{1}{2})_t$ bound. 
		To this end, we test the first equation in (\ref{approx_problem}) with $u_\eps^{-\frac{1}{2}} \phi$,  $\phi \in C^\infty(\overline{\Omega})$, and apply partial integration and (\ref{eq:weak_De_bounds}) as well as the Hölder and Young's inequality to see that
		\begin{align*}
		&2\left|\int_\Omega (u^\frac{1}{2}_{\eps})_t \phi\right| 
		= \left|\int_\Omega \ue^{-\frac{1}{2}} u_{\eps t} \phi \right|\\
		&\leq \left|\int_\Omega \ue^{-\frac{1}{2}}\grad \ue \cdot \De \grad \phi \right| + \frac{1}{2}\left|\int_\Omega \ue^{-\frac{3}{2}}(\grad \ue \cdot \De \grad \ue) \phi \right|  \\
		&\hphantom{=\;}+ \left|\int_\Omega \ue^\frac{1}{2} ((\div \De) \cdot \grad \phi) \right| + \frac{1}{2}\left|\int_\Omega \ue^{-\frac{1}{2}} ((\div \De) \cdot \grad \ue) \phi \right| \\
		&\hphantom{=\;}+ 
		\chi\left|\int_\Omega\ue^\frac{1}{2} \grad \we \cdot \De \grad \phi \right| + \frac{\chi}{2}	\left|\int_\Omega\ue^{-\frac{1}{2}} ( \grad \we \cdot \De \grad \ue) \phi \right| + \logc \left| \int_\Omega \ue^\frac{1}{2} ( 1- \ue^{\loge + \eps - 1})\phi \right|\\
		&\leq \left(\int_\Omega \grad \phi \cdot \De \grad \phi \right)^\frac{1}{2}\left(\int_\Omega \frac{\grad \ue \cdot \De \grad \ue}{\ue}\right)^\frac{1}{2}  + \frac{\|\phi\|_\L{\infty}}{2} \int_\Omega \ue^{-\frac{3}{2}}  (\grad \ue \cdot \De \grad \ue) \\
		&\hphantom{=\;}+ \|\grad \phi\|_\L{\infty} \left( \int_\Omega \ue + \int_\Omega |\div \De|^2 \right) + \frac{\|\phi\|_\L{\infty}}{2} \int_\Omega \ue^{-\frac{1}{2}}|(\div \De) \cdot \grad \ue| \\
		&\hphantom{=\;}+ \chi\left( \int_\Omega \ue\grad \phi \cdot \De \grad \phi \right)^\frac{1}{2}\left(\int_\Omega \grad \we \cdot \De \grad \we\right)^\frac{1}{2} \\
		&\hphantom{=\;}+ \frac{\chi \|\phi\|_\L{\infty}}{2} \left(\int_\Omega \frac{\grad \ue \cdot \De \grad \ue}{\ue}\right)^\frac{1}{2}\left(\int_\Omega \grad \we \cdot \De \grad \we \right)^\frac{1}{2}  \\
		&\hphantom{=\;}+\mu \|\phi\|_\L{\infty}\int_\Omega \ue^\frac{1}{2} + \mu  \|\phi\|_\L{\infty} \int_\Omega \ue^{r+\eps - \frac{1}{2}} \\
		&\leq K\left( \|\phi\|_{L^\infty(\Omega)} + \|\grad \phi\|_{L^\infty(\Omega)}\right) \left( \int_\Omega \frac{\grad \ue \cdot \De \grad \ue}{\ue} + \int_\Omega \ue^{-\frac{3}{2}}  (\grad \ue \cdot \De \grad \ue) + \int_\Omega \ue^{-\frac{1}{2}}|(\div \De) \cdot \grad \ue|\right.\\ 
		& \qquad\qquad\qquad\qquad\qquad\qquad \left.\hphantom{=\;\;\;\;}+ \int_\Omega \grad \we \cdot \De \grad \we + \int_\Omega \ue^{r+\eps}\ln(\ue^{r+\eps})+ 1 \right)
		\end{align*}
		for all $t\in(0,T)$ and $\eps \in (0,1)$ with some appropriate constant $K > 0$ only dependent on $\Omega$, $\mu$, $r$, $\chi$ and $M$. Given the above inequality, the remainder of our desired result follows from \Cref{lemma:energy_baseline}, \Cref{lemma:basic_bounds_weak} and \Cref{lemma:additional_baseline} as well as the continuous embedding of $W^{n+1,2}(\Omega)$ into $W^{1,\infty}(\Omega)$ and density of $C^\infty(\overline{\Omega})$ in $W^{n+1,2}(\Omega)$.
	\end{proof}
	\noindent
	Having prepared all the necessary bounds, we will now construct the solution candidates by using various compact embedding arguments to gain them as the limit of our approximate solutions.
	\begin{lemma}\label{lemma:convergence_properties}
		There exist a null sequence $(\eps_j)_{j\in\N}\subseteq (0,1)$ and a.e.\ non-negative functions 
		\begin{align*}
		u &\in L_\loc^\frac{2r}{r+1}([0,\infty);\WD{\frac{2r}{r+1}}) \cap L_\loc^r(\overline{\Omega}\times[0,\infty)), \\
		w &\in L_\loc^2([0,\infty);\WD{2})\cap L^\infty(\Omega\times(0,\infty)),
		\end{align*} 
		such that
		\begin{align}
		\ue &\rightarrow u &&\text{ in } L_\loc^r(\overline{\Omega}\times[0,\infty)) \text{ and a.e.\ in } \Omega\times[0,\infty), \label{eq:ue_convergence}\\
		\ue^{r+\eps} &\rightarrow u^r &&\text{ in }L^1_\loc(\overline{\Omega} \times [0,\infty)) \text{ and a.e.\ in } \Omega\times[0,\infty),\label{eq:ue_higher_r_convergence}\\
		\ue &\rightharpoonup u && \text{ in } L^\frac{2r}{r+1}_\loc([0,\infty); W^{1,\frac{2r}{r+1}}_\D(\Omega)),\label{eq:ue_weak_convergence}  \\
		\we &\rightarrow w &&\text{ in } L_\loc^p(\overline{\Omega}\times[0,\infty)) \text{ for all } p \in [1,\infty) \text{ and a.e.\ in } \Omega\times[0,\infty),\label{eq:we_convergence}  \\
		\we &\rightharpoonup w && \text{ in } L^2_\loc([0,\infty); W^{1,2}_\D(\Omega))\label{eq:we_weak_convergence}  
		\end{align}
		as $\eps = \eps_j \searrow 0$.
	\end{lemma}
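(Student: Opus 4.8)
The plan is to produce $u$ and $w$ as limits of the approximate solutions $(\ue,\we)$ along a suitable null sequence, combining Aubin--Lions--Simon compactness for the "regularized" quantities $\ue^{1/2}$ and $\we$ with the uniform integrability encoded in the bounds on $\int_0^T\int_\Omega\ue^r\ln\ue$ and $\int_0^T\int_\Omega\ue^{r+\eps}\ln(\ue^{r+\eps})$ from \Cref{lemma:basic_bounds_weak}, and with reflexivity of the degenerate Sobolev spaces for the weak gradient statements. I would fix $T>0$, carry out every argument on $\Omega\times(0,T)$, and obtain the final sequence valid on all of $[0,\infty)$ by a terminal diagonal extraction over $T=1,2,\dots$; nonnegativity of $u$ and $w$ then follows at once from $\ue,\we>0$ and a.e.\ convergence.

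First I would treat $\ue^{1/2}$. By \Cref{lemma:basic_bounds_weak} the family $(\ue^{1/2})_{\eps\in(0,1)}$ is bounded in $L^2((0,T);\WD{2})$, and by \Cref{lemma:dual_bounds_weak} its time derivative is bounded in $L^1((0,T);(W^{n+1,2}(\Omega))^*)$. Since $\D$ allows for a compact $L^1(\Omega)$ embedding (\Cref{definition:comp_regularity}) and $L^1(\Omega)$ embeds continuously into $(W^{n+1,2}(\Omega))^*$ because $W^{n+1,2}(\Omega)\hookrightarrow L^\infty(\Omega)$ in dimensions $n\in\{2,3\}$, the Aubin--Lions--Simon lemma in its version admitting $L^1$ time derivatives yields a null sequence $(\eps_j)$ along which $\ue^{1/2}$ converges strongly in $L^2((0,T);L^1(\Omega))$ and, after a further extraction, a.e.\ in $\Omega\times(0,T)$; squaring the limit defines $u\geq0$ with $\ue\to u$ a.e. The bound $\int_0^T\int_\Omega\ue^r\ln\ue\leq C$ together with the de la Vallée Poussin criterion makes $(\ue^r)$ uniformly integrable over $\Omega\times(0,T)$, so Vitali's theorem upgrades a.e.\ convergence to $\ue\to u$ in $L^r(\Omega\times(0,T))$, which is \eqref{eq:ue_convergence}. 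Moreover $\ue\to u$ a.e.\ forces $\ue^{r+\eps}\to u^r$ a.e.\ (distinguishing $u>0$, where $\ue^{\eps}\to1$, from $u=0$, where $\ue^{r+\eps}\le\ue^r\to0$ eventually), and the uniform integrability of $(\ue^{r+\eps})$ coming from $\int_0^T\int_\Omega\ue^{r+\eps}\ln(\ue^{r+\eps})\le C$ together with Vitali's theorem then gives \eqref{eq:ue_higher_r_convergence}.

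Next I would establish the weak gradient statements. Because $\D\le\De$ by \eqref{eq:De_estimate}, the bound of \Cref{lemma:basic_bounds_weak} forces $\int_0^T\|\grad\ue\|_{\LD{\frac{2r}{r+1}}}^{\frac{2r}{r+1}}$ to be bounded uniformly in $\eps$, so the pairs $(\ue,\grad\ue)$ are bounded in $L^{\frac{2r}{r+1}}((0,T);L^{\frac{2r}{r+1}}(\Omega)\times\LD{\frac{2r}{r+1}})$, which is reflexive since $\tfrac{2r}{r+1}\in(1,2]$; a further extraction produces a weak limit. As $\WD{\frac{2r}{r+1}}$ is a closed — hence weakly closed — subspace of $L^{\frac{2r}{r+1}}(\Omega)\times\LD{\frac{2r}{r+1}}$ and each $(\ue,\grad\ue)$ lies in it (the classical gradient of the smooth $\ue$ being its $\D$-gradient), the weak limit lies in $L^{\frac{2r}{r+1}}((0,T);\WD{\frac{2r}{r+1}})$; its first component must equal $u$ by the already established strong convergence $\ue\to u$ and continuity of $P_1$ with respect to both topologies, giving \eqref{eq:ue_weak_convergence}. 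The second solution component is treated in exactly the same way: $\we$ is bounded in $L^2((0,T);\WD{2})$ (again via $\D\le\De$) with $w_{\eps t}$ bounded in $L^1((0,T);(W^{n+1,2}(\Omega))^*)$ by \Cref{lemma:dual_bounds_weak}, so Aubin--Lions--Simon and the compact $L^1$ embedding give strong $L^2((0,T);L^1(\Omega))$ and a.e.\ convergence to some $w\ge0$; combining this with the uniform $L^\infty$ bound of \Cref{lemma:absolute_baseline} and dominated convergence yields \eqref{eq:we_convergence} and $w\in L^\infty(\Omega\times(0,\infty))$, while reflexivity of $L^2((0,T);L^2(\Omega)\times\LD{2})$ and the same closed-subspace argument yield \eqref{eq:we_weak_convergence}.

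The step I expect to be the main obstacle is the bookkeeping around the degenerate spaces: ensuring that the weak limits of $(\ue,\grad\ue)$ and $(\we,\grad\we)$ are genuinely elements of $\WD{\frac{2r}{r+1}}$ respectively $\WD{2}$ rather than merely of the ambient product spaces, and that their first components coincide with the pointwise a.e.\ limits. This rests on $\WD{p}$ being a reflexive closed subspace of $L^p(\Omega)\times\LD{p}$ and on the projection $P_1$ being continuous for both the strong and the weak topology, together with the repeated use of $\De\ge\D$ to transfer the $\eps$-dependent gradient bounds from the $\De$-spaces to the fixed space associated with $\D$. Everything else is the routine combination of Simon's compactness lemma, Vitali's theorem with the de la Vallée Poussin criterion, and the diagonal argument in $T$.
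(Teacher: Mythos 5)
Your proposal follows essentially the same route as the paper: Aubin--Lions applied to $\ue^{1/2}$ and $\we$ with the triple $\WD{2}\hookrightarrow L^1(\Omega)\hookrightarrow (W^{n+1,2}(\Omega))^*$, a diagonal extraction over $T\in\N$, squaring to define $u$, Vitali with the de la Vall\'ee Poussin criterion for (\ref{eq:ue_convergence}) and (\ref{eq:ue_higher_r_convergence}), the uniform $L^\infty$ bound for (\ref{eq:we_convergence}), and reflexivity plus the transfer $\D\leq\De$ for the weak gradient statements. The only difference is that you spell out the weak-closedness of $\WD{p}$ and the identification of the first component via $P_1$, which the paper dispatches as "standard" (having prepared it in the remark following \Cref{definition:spaces}); your argument is correct.
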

	\begin{proof}
		Given that both the families $(\ue^\frac{1}{2})_{\eps\in(0,1)}$ and $(\we)_{\eps\in(0,1)}$ are bounded in $L_\loc^2([0,\infty); \WD{2})$ according to \Cref{lemma:basic_bounds_weak} and the families $(({\ue^\frac{1}{2}})_t)_{\eps\in(0,1)}$ and $({\we}_t)_{\eps\in(0,1)}$ are bounded in $L_\loc^1([0,\infty); (W^{n+1,2}(\Omega))^*)$ according to \Cref{lemma:dual_bounds_weak}, we can apply the Aubin--Lions lemma (cf.\  \cite{TemamNavierStokesEquationsTheory1977}) to the above families using the triple of embedded spaces $\WD{2} \subseteq L^1(\Omega) \subseteq (W^{n+1,2}(\Omega))^*$. Note that this is only possible as the first embedding is in fact compact by our assumptions (cf.\  \Cref{definition:comp_regularity}). Therefore, there exists a null sequence $(\eps_j)_{j\in\N} \subseteq (0,1)$ and functions $\tilde{u},w: \overline{\Omega} \times [0,T) \rightarrow \R$ such that
		\[
		\ue^\frac{1}{2} \rightarrow \tilde{u} \stext{ and } \we \rightarrow w \;\;\;\; \text{ in }  L_\loc^2([0,\infty);L^1(\Omega)) \text{ and therefore in } L_\loc^1(\overline{\Omega}\times[0,\infty))
		\]
		as $\eps = \eps_j \searrow 0$. This sequence is constructed by applying the Aubin--Lions lemma countably infinitely many times on time intervals of the form $[0,T]$, $T\in\N$, combined with a straightforward extension and diagonal sequence argument. We can further choose the above sequence in such way as to ensure that $\ue^\frac{1}{2} \rightarrow \tilde{u}$ and $\we \rightarrow w$ pointwise almost everywhere as $\eps = \eps_j \searrow 0$ by potentially switching to another subsequence. Due to the family $(\we)_{\eps \in (0,1)}$ furthermore being uniformly bounded in $L^\infty(\Omega\times(0,\infty))$ (cf.\ (\ref{eq:weak_initial_data_bounds}) and \Cref{lemma:absolute_baseline}), the above convergence properties directly imply (\ref{eq:we_convergence}) as well as the fact that $w$ is non-negative almost everywhere and $w \in L^\infty(\Omega\times(0,\infty))$.
		\\[0.5em]
		We now set $u \defs \tilde{u}^2$ and observe that the above almost everywhere pointwise convergence for the already constructed sequences then ensures that
		\[
			\ue \rightarrow u \stext{ and } \ue^{r+\eps} \rightarrow u^r \;\;\;\; \text{ a.e.\ pointwise}
		\] 
		as $\eps = \eps_j \searrow 0$. This immediately gives us non-negativity of $u$ as well. Further as for every $T > 0$ there exists $K \equiv K(T) > 0$ such that
		\[
		\int_0^T \int_\Omega \ue^r|\ln(\ue)| \leq K \stext{ and } \int_0^T \int_\Omega \ue^{r+\eps} |\ln(\ue^{r+\eps})| \leq K
		\]
		for all $\eps \in (0,1)$ according to \Cref{lemma:basic_bounds_weak}, we can use Vitali's theorem and the de La Valleé Poussin criterion for uniform integrability (cf.\ \cite[pp.\ 23-24]{DellacherieProbabilitiesPotential1978}) to gain convergence properties (\ref{eq:ue_convergence}), (\ref{eq:ue_higher_r_convergence}).
		\\[0.5em]
		The remaining weak convergence properties (\ref{eq:ue_weak_convergence}) and (\ref{eq:we_weak_convergence}) then follow immediately by another similar but fairly standard subsequence extraction argument as the respective families of functions are bounded in the relevant spaces according to \Cref{lemma:basic_bounds_weak}.
		\\[0.5em]
		As all not yet explicitly established regularity properties for $u$ and $w$ directly follow from the convergence properties and we have at this point proven all said properties, this completes the proof.
	\end{proof} \noindent
	For the remainder of this section, we will now fix the functions $u$, $w$ as well as the sequence $(\eps_j)_{j\in\N}$ constructed in the preceding lemma. While the convergence properties derived in \Cref{lemma:convergence_properties} are in fact already sufficient to allow us to translate the weak solution property from our approximate solutions to our now established solution candidates, we will as a last effort before the proof of \Cref{theorem:weak_solutions} derive some more specifically tailored convergence properties to handle some of the more complex terms in the weak solution definition.
	\\[0.5em]
	Note that the following lemma is the critical point in this section where the assumption $r \geq 2$ becomes important. The only other points in this section where this assumption was used are the argument ensuring the existence of the approximate solutions, where it could likely be dropped by introducing yet more regularizations to (\ref{approx_problem}), and our assumption that $\beta$ is an element of $[\frac{2}{3}, 1)$ without loss of generality, which was done mostly for convenience.
	\begin{lemma} \label{lemma:additional_convergence_properties}
		The convergence properties 
		\begin{equation}\label{eq:weakish_convergence_1}
		\int_0^\infty\int_\Omega \grad \ue \cdot \De \grad \phi \rightarrow \int_0^\infty\int_\Omega \grad u \cdot \D \grad \phi \;\;\;\; \text{ as } \eps = \eps_j \searrow 0
		\end{equation}
		and
		\begin{equation}\label{eq:weakish_convergence_2}
		\int_0^\infty\int_\Omega \ue \grad \we \cdot \De \grad \phi \rightarrow \int_0^\infty\int_\Omega u \grad w \cdot \D \grad \phi \;\;\;\; \text{ as } \eps = \eps_j \searrow 0
		\end{equation}
		hold for all $\phi \in C_c^\infty(\overline{\Omega}\times[0,\infty))$.
	\end{lemma}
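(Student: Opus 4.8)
The plan is to establish both convergences by the same two-step reduction: first replace $\De$ by $\D$ at the cost of an error that vanishes with $\eps$, and then recognise the remaining integral as a bounded linear functional evaluated along a weakly convergent sequence. Two preliminary facts would carry the argument. The first is that, by (\ref{eq:De_estimate}), the matrix $\De - \D$ is positive definite with $\eps I \leq \De - \D \leq 3\eps I$ on $\overline{\Omega}$, so that the Cauchy--Schwarz inequality for the $(\De-\D)$-scalar product together with $\grad\phi\cdot(\De-\D)\grad\phi \leq 3\eps|\grad\phi|^2$ and $\grad v\cdot(\De-\D)\grad v \leq \grad v\cdot\De\grad v$ (the latter since $\D$ is positive semidefinite) gives the pointwise estimate $|\grad v\cdot(\De-\D)\grad\phi| \leq (3\eps)^{\frac12}\|\grad\phi\|_\L{\infty}(\grad v\cdot\De\grad v)^{\frac12}$ for any differentiable $v$. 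The second is that $\Psi \mapsto \int_0^\infty\int_\Omega \Psi\cdot\D\grad\phi$, as well as its variant weighted by a fixed $\L{\frac{p}{p-1}}$-function, is a bounded linear functional on $L^p([0,T];\LD{p})$ for every $p\in(1,\infty)$ and every $T>0$: the integrand is dominated via Cauchy--Schwarz for $\D$ by $(\Psi\cdot\D\Psi)^{\frac12}(\grad\phi\cdot\D\grad\phi)^{\frac12}$, which is insensitive to the equivalence relation defining $\LD{p}$ and has bounded second factor because $\phi$ is smooth and $\D\in C^0(\overline{\Omega})$, so Hölder in space--time closes it. Since $\phi$ has compact support in time, everything happens on some $\Omega\times(0,T)$ and all estimates from \Cref{lemma:basic_bounds_weak} and all convergence properties from \Cref{lemma:convergence_properties} are available.

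For (\ref{eq:weakish_convergence_1}) I would split $\De\grad\phi = (\De-\D)\grad\phi + \D\grad\phi$. The first piece is bounded by $(3\eps)^{\frac12}\|\grad\phi\|_\L{\infty}\int_0^T\int_\Omega(\grad\ue\cdot\De\grad\ue)^{\frac12}$, and since (\ref{eq:basic_bounds_weak_grad_u}) yields $\int_0^T\int_\Omega(\grad\ue\cdot\De\grad\ue)^{\frac{r}{r+1}}\leq C$ with $\frac{r}{r+1}\geq\frac12$ (using $r\geq 2$), Hölder on the finite measure space $\Omega\times(0,T)$ makes this $\to 0$ as $\eps=\eps_j\searrow 0$. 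The second piece equals the bounded functional above (with $p=\frac{2r}{r+1}\in(1,2)$) applied to $\grad\ue = P_2(\ue)$, which converges weakly to $P_2(u)=\grad u$ in $L^{\frac{2r}{r+1}}_\loc([0,\infty);\LD{\frac{2r}{r+1}})$ by (\ref{eq:ue_weak_convergence}) and weak continuity of $P_2$; hence it converges to $\int_0^\infty\int_\Omega\grad u\cdot\D\grad\phi$, giving (\ref{eq:weakish_convergence_1}).

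For (\ref{eq:weakish_convergence_2}) I would again peel off the $(\De-\D)$-error, now controlled by $(3\eps)^{\frac12}\|\grad\phi\|_\L{\infty}\bigl(\int_0^T\int_\Omega\ue^2\bigr)^{\frac12}\bigl(\int_0^T\int_\Omega\grad\we\cdot\De\grad\we\bigr)^{\frac12}$, which vanishes using $\int_0^T\int_\Omega\ue^2\leq C$ (from $r\geq 2$ and $\ue\to u$ in $L^r_\loc$) and (\ref{eq:basic_bounds_weak_w}); the remaining triple product I would treat by the strong--weak split
\[
\ue\grad\we\cdot\D\grad\phi - u\grad w\cdot\D\grad\phi = (\ue - u)\grad\we\cdot\D\grad\phi + u(\grad\we-\grad w)\cdot\D\grad\phi ,
\]
handling the first term by $\ue\to u$ in $L^r_\loc$ (cf.\ (\ref{eq:ue_convergence})) paired with $\grad\we\cdot\D\grad\phi$, which is bounded in $L^2(\Omega\times(0,T))\subseteq L^{\frac{r}{r-1}}(\Omega\times(0,T))$ (since $|\grad\we\cdot\D\grad\phi|\leq C(\grad\we\cdot\D\grad\we)^{\frac12}$ and $r\geq 2$), via Hölder with exponents $r$ and $\frac{r}{r-1}$, and handling the second term by the weak convergence $\grad\we = P_2(\we)\rightharpoonup P_2(w)=\grad w$ in $L^2_\loc([0,\infty);\LD{2})$ from (\ref{eq:we_weak_convergence}) against the bounded functional $\Psi\mapsto\int_0^\infty\int_\Omega u\,\Psi\cdot\D\grad\phi$, which is admissible because $u\in L^2(\Omega\times(0,T))$ for $r\geq 2$. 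The main obstacle is precisely this triple product: one must split it so that the only factor converging merely weakly — namely $\grad\we$ here, and $\grad\ue$ in the first statement — is always paired against strongly convergent or uniformly bounded factors, and one must invoke Cauchy--Schwarz for the degenerate form $\D$ repeatedly to give the limiting pairings meaning; securing the required space--time integrability of $u$ and of $(\grad\we\cdot\D\grad\we)^{1/2}$ in this step is exactly where the hypothesis $r\geq 2$ is used, as already flagged in the text just above the lemma.
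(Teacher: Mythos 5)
Your proof is correct, and its overall skeleton coincides with the paper's: fix $T$ with $\supp\phi\subseteq\overline{\Omega}\times[0,T)$, peel off the $\De-\D$ error at rate $O(\eps^{1/2})$, send the strongly convergent factor $\ue$ against a uniformly $L^{r/(r-1)}$-bounded partner, and dispatch the remaining terms by pairing the weakly convergent gradients with bounded linear functionals built from $\D\grad\phi$. The one place where you genuinely deviate is the treatment of the $\De-\D$ error: the paper uses the lower bound $\De\geq\D+\eps\geq\eps I$ from (\ref{eq:De_estimate}) to derive the Euclidean bounds $\|\grad\ue\|_{L^1(\Omega\times(0,T))}\leq K\eps^{-1/2}$ and $\|\grad\we\|_{L^2(\Omega\times(0,T))}\leq K\eps^{-1/2}$ (its (\ref{eq:grad_ue_eps_estimate}) and (\ref{eq:grad_we_eps_estimate})) and then multiplies by $\|\De-\D\|_\L{\infty}\leq3\eps$, whereas you apply the Cauchy--Schwarz inequality for the positive form $\De-\D$ directly, placing the smallness $(3\eps)^{1/2}$ on the $\grad\phi$ factor and keeping $\grad\ue$, $\grad\we$ in their uniformly bounded $\De$-weighted norms. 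Your variant is marginally cleaner in that it never needs gradient bounds that degenerate as $\eps\searrow0$ and uses only the upper half of (\ref{eq:De_estimate}) together with positive semidefiniteness of $\D$; the paper's version is more elementary in that it works with ordinary Lebesgue norms throughout. The remaining minor differences (the order in which you swap $\De\to\D$ versus $\ue\to u$ in (\ref{eq:weakish_convergence_2}), and your use of $L^r$--$L^{r/(r-1)}$ H\"older where the paper uses $L^2$--$L^2$) are cosmetic, and your explicit checks that the functionals are well defined on the quotient spaces $\LD{p}$ and that $r\geq2$ secures $u\in L^2_\loc$ are exactly the points the paper relies on implicitly.
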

	\begin{proof}
		Fix $\phi \in C_c^\infty(\overline{\Omega}\times[0,\infty))$ and $T > 0$ such that $\supp \phi \subseteq \overline{\Omega}\times[0,T)$. We can then fix a constant $K_1 \geq 1$ such that
		\[
		\int_0^T \int_\Omega \left(\grad \ue \cdot \De \grad \ue\right)^\frac{r}{r+1} \leq K_1 \stext{ and } \int_0^T \int_\Omega \grad \we \cdot \De \grad \we \leq  K_1 
		\]
		for all $\eps \in (0,1)$ according to \Cref{lemma:basic_bounds_weak}. This implies that
		\begin{align*}
		\|\grad \ue\|_{L^1(\Omega\times(0,T))} &\leq (|\Omega| + 1)\left(\int_0^T\int_\Omega \left(\grad \ue \cdot \grad \ue\right)^\frac{r}{r+1} \right)^\frac{r+1}{2r} \\
		&\leq (|\Omega| + 1) \left(\frac{1}{\eps^\frac{r}{r+1}}\int_0^T\int_\Omega  \left(\grad\ue \cdot \De \grad \ue \right)^\frac{r}{r+1} \right)^\frac{r+1}{2r} \leq \frac{K_2}{\eps^\frac{1}{2}} \numberthis\label{eq:grad_ue_eps_estimate}
		\end{align*}
		and
		\[
		\|\grad \we\|_{L^2(\Omega\times(0,T))}  =\left(\int_0^T\int_\Omega \grad \we \cdot \grad \we \right)^\frac{1}{2} \leq (|\Omega| + 1) \left(\frac{1}{\eps}\int_0^T\int_\Omega \grad \we \cdot \De \grad \we \right)^\frac{1}{2} \leq \frac{K_2}{\eps^\frac{1}{2}}\numberthis\label{eq:grad_we_eps_estimate}
		\]
		for all $\eps \in (0,1)$ with $K_2 \defs K_1(|\Omega| + 1)$ due to the Hölder inequality and the estimate (\ref{eq:De_estimate}). We then observe that
		\begin{align*}
		&\left| \int_0^T\int_\Omega \grad \ue \cdot \De \grad \phi - \int_0^T\int_\Omega \grad u \cdot \D \grad \phi \right| \\
		&\leq \left| \int_0^T\int_\Omega \grad \ue \cdot \De \grad \phi - \int_0^T\int_\Omega \grad \ue \cdot \D \grad \phi \right| + \left| \int_0^T\int_\Omega \grad \ue \cdot \D \grad \phi - \int_0^T\int_\Omega \grad u \cdot \D \grad \phi \right| \\
		&\leq \|\grad \ue\|_{L^1(\Omega\times(0,T))}\|\De - \D\|_\L{\infty}\|\grad \phi\|_{L^\infty(\Omega\times(0,T))}  + \left|\int_0^T\int_\Omega \grad \ue \cdot \D \grad \phi - \int_0^T\int_\Omega \grad u \cdot \D \grad \phi \right|\\
		&\leq 3 K_2 \eps^\frac{1}{2} \|\grad \phi\|_{L^\infty(\Omega\times(0,T))} + \left| \int_0^T\int_\Omega \grad \ue \cdot \D \grad \phi - \int_0^T\int_\Omega \grad u \cdot \D \grad \phi \right|
		\end{align*}
		for all $\eps \in (0,1)$ because of (\ref{eq:De_estimate}) and (\ref{eq:grad_ue_eps_estimate}). This inequality immediately implies (\ref{eq:weakish_convergence_1}) due to the weak convergence property (\ref{eq:ue_weak_convergence}) presented in \Cref{lemma:convergence_properties}.
		\\[0.5em]
		We now similarly estimate that
		\begin{align*}
		&\left| \int_0^T\int_\Omega \ue \grad \we \cdot \De \grad \phi - \int_0^T\int_\Omega u \grad w \cdot \D \grad \phi \right| \\
		&\leq \left| \int_0^T\int_\Omega \ue \grad \we \cdot \De \grad \phi - \int_0^T\int_\Omega u \grad \we \cdot \De \grad \phi \right| + \left| \int_0^T\int_\Omega u \grad \we \cdot \De \grad \phi - \int_0^T\int_\Omega u \grad \we \cdot \D \grad \phi \right|\\
		&\hphantom{=\;}+ \left| \int_0^T\int_\Omega u \grad \we \cdot \D \grad \phi - \int_0^T\int_\Omega u \grad w \cdot \D \grad \phi \right| \\
		&\leq \left(\int_0^T \int_\Omega \grad \we \cdot \De \grad \we\right)^\frac{1}{2}\left(\int_0^T \int_\Omega (u-\ue)^2 (\grad \phi \cdot \De \grad  \phi)\right)^\frac{1}{2} \\		
		&\hphantom{=\;}+ \|u\|_{L^2(\Omega\times(0,T))} \|\grad \we\|_{L^2(\Omega\times(0,T))} \|\De - \D\|_\L{\infty} \|\grad \phi\|_{L^\infty(\Omega\times(0,T))} \\
		&\hphantom{=\;}+ \left| \int_0^T\int_\Omega u \grad \we \cdot \D \grad \phi - \int_0^T\int_\Omega u \grad w \cdot \D \grad \phi \right| \\
		&\leq K_1^\frac{1}{2} (\|\D\|_\L{\infty} + 3)^\frac{1}{2} \|\grad \phi\|_\L{\infty} \|u - \ue\|_{L^2(\Omega\times(0,T))} 
		+ 3 K_2 \eps^\frac{1}{2} \|u\|_{L^2(\Omega\times(0,T))} \|\grad \phi\|_{L^\infty(\Omega\times(0,T))} \\
		&\hphantom{=\;}+ \left| \int_0^T\int_\Omega u \grad \we \cdot \D \grad \phi - \int_0^T\int_\Omega u \grad w \cdot \D \grad \phi \right| 
		\end{align*}
		for all $\eps \in (0,1)$ because of (\ref{eq:De_estimate}) and (\ref{eq:grad_we_eps_estimate}). Due to the convergence properties (\ref{eq:ue_convergence}) and (\ref{eq:we_weak_convergence}) as well as the fact that $r \geq 2$ and therefore $u \in L_\loc^2(\overline{\Omega}\times[0,\infty))$, the above estimate implies (\ref{eq:weakish_convergence_2}) and thus completes the proof.
	\end{proof}
	\noindent As all convergence properties necessary to argue that $u$ and $w$ are in fact our desired weak solution have been established, we can now present the at this point fairly short proof of our second main existence result, namely \Cref{theorem:weak_solutions}. 
	\begin{proof}[Proof for \Cref{theorem:weak_solutions}]
		We first note that $u$, $w$ are already sufficiently regular to ensure that (\ref{eq:weak_solution_regularity_1}) and (\ref{eq:weak_solution_regularity_2}) hold due to \Cref{lemma:convergence_properties}.
		\\[0.5em]		
		It is further straightforward to verify that $\ue, \we$ are weak solutions in the sense of \Cref{definition:weak_solution} for all $\eps \in (0,1)$ with only slightly different parameters. As such, we only now need to confirm that all the terms in the weak solution definition converge to their counterparts without $\eps$. For all the terms that are structurally identical in both the approximated case as well as in the weak solution definition we want to achieve for $u$ and $w$, this is covered by \Cref{lemma:convergence_properties} as well as the convergence properties of the initial data laid out at the beginning of this section. The terms that differ because $\D$ was replaced by $\De$ are covered by either \Cref{lemma:additional_convergence_properties} or (\ref{eq:De_convergence}) combined with (\ref{eq:ue_convergence}) from \Cref{lemma:convergence_properties}. Finally, the logistic terms $\int_0^\infty\int_\Omega \ue(1-\ue^{r-1+\eps})\phi$ occurring in the weak solution definition for our approximate solutions converge to their proper counterpart $\int_0^\infty\int_\Omega u (1-u^{r-1})\phi$ due to (\ref{eq:ue_convergence}) and (\ref{eq:ue_higher_r_convergence}) from \Cref{lemma:convergence_properties} as well. We have now discussed that all the terms occurring in the weak solution definition of the approximate solutions converge to the correct terms for our solution candidates. Therefore $(u,w)$ is a weak solution of the type described in \Cref{definition:weak_solution}.  
	\end{proof} 
	
	\section*{Acknowledgment} The author acknowledges the support of the \emph{Deutsche Forschungsgemeinschaft} in the context of the project \emph{Emergence of structures and advantages in cross-diffusion systems}, project number 411007140.

\end{document}